\documentclass[11pt]{amsart}

\usepackage[a4paper,margin=1.1in]{geometry}
\usepackage{amsmath,amsthm,amssymb,mathtools}
\usepackage{hyperref}
\usepackage{enumitem}
\usepackage [autostyle, english = american]{csquotes}
\MakeOuterQuote{"}

\newtheorem{thm}{Theorem}[section]
\newtheorem{lem}[thm]{Lemma}
\newtheorem{prp}[thm]{Proposition}
\newtheorem{cor}[thm]{Corollary}
\newtheorem{dfn}[thm]{Definition}
\newtheorem{rmk}[thm]{Remark}

\numberwithin{equation}{section}

\newcommand{\R}{\mathbb{R}}

\title{Obstructions to exact regularity of sublaplacians}
\author{Gian Maria Dall'Ara}
\thanks{Istituto Nazionale di Alta Matematica ``F. Severi" and Scuola Normale Superiore}

\date{\today}

\begin{document}
	
\maketitle

\begin{abstract}
Let $M$ be a closed manifold equipped with a collection of smooth real vector fields $X_1,\ldots, X_k$ and a smooth measure $\mu$. Let $P:=\sum_j X_j^\dagger X_j $ be the associated sublaplacian. Under the assumption that each pair of points of $M$ can be connected by a path obtained by joining integral curves of the vector fields, the equation \[
Pu=f
\] admits unique zero-average solutions $u$ in the natural energy space, for all zero average data $f\in L^2$. We say that exact regularity holds for the above equation if $u\in H^k(M)$ whenever $f\in H^k(M)$, where $H^k(M)$ is the $L^2$ based Sobolev space of any order $k\in \mathbb{N}$. We investigate how the presence of a characteristic submanifold, namely a submanifold tangent to all vector fields $X_j$, may cause a failure of exact regularity. We prove that this indeed happens for a class of "worm sublaplacians", which are real analogues of Kohn Laplacians on Diederich--Fornaess worm domains, of interest in several complex variables. Our main tool is a scaling lemma inspired by work of Barrett and Christ on the $\bar\partial$-Neumann problem. 
\end{abstract}

\tableofcontents

\section{Introduction} 

\subsection{The exact regularity problem for sublaplacians} This paper is a contribution to the global regularity theory of degenerate linear PDEs on manifolds. It investigates obstructions to "exact regularity" for general sublaplacians. We say that exact regularity holds for a PDE \[
Pu=f
\] on a manifold $M$ if we have the implication \[f\in H^k(M)\qquad \Longrightarrow\qquad u\in H^k(M)\] for all $k$, where $H^k(M)$ is the $L^2$ based Sobolev space of order $k$; in other words, exact regularity holds if the solutions have at least the same amount of ($L^2$ Sobolev) regularity as the data. In order to make the notion completely rigorous, one first needs to establish a basic existence and uniqueness theory for the equation of interest, specifying to which weak solutions exact regularity applies (technically, the Sobolev space $H^k(M)$ is not canonically defined in case the manifold is noncompact, but this will not be an issue for us, as we will confine our attention to the compact case). 

We stress the fact that exact regularity is a \emph{global} property: it only applies to global solutions $u$ for which the datum $f$ is globally regular, and may hold even in absence of any local regularity. By Sobolev embedding exact regularity implies global regularity, that is,  \begin{equation}\label{eq:global_reg} f\in C^\infty(M)\qquad \Longrightarrow\qquad u\in C^\infty(M), \end{equation}
while it has no bearing on local regularity properties like hypoellipticity (we refrained from calling the implication \eqref{eq:global_reg} "global hypoellipticity" as it may apply only to a specific class of solutions). \newline 

The linear PDEs considered in this paper are sublaplacians \[
P=\sum_j X_j^\dagger X_j, 
\]
where $\{X_1,\ldots, X_k\}$ is a collection of smooth real vector fields on a compact connected manifold $M$, and $\dagger$ denotes formal adjunction with respect to a background measure $\mu$ (always assumed to have a smooth positive density in local coordinates). 
It has been known since the landmark work \cite{Hormander1967} of Hörmander that, if the Lie algebra generated by $X_1, \ldots, X_k$ spans the tangent space of $M$ at a point $p$ (this is often called the "Hörmander iterated commutators" condition), then solutions of the equation $Pu=f$ gain a positive amount of regularity in the $L^2$ Sobolev scale, locally near the point $p$: if $f\in H^k$ in a neighborhood of $p$, then $u\in H^{k+\delta}$ on a possibly smaller neighborhood; here $\delta$ is positive and depends on the number of iterated commutators needed to span the tangent space. In particular, if the Hörmander iterated commutators condition holds at every point, then $P$ is certainly exactly regular. Thus, any further progress on exact regularity for sublaplacians requires an analysis of the "degeneration locus", namely the set of points where Hörmander condition fails. 

\subsection{Informal presentation of results} In a previous paper \cite{DallAra2024}, we formulated a geometric condition on the degeneration locus of a sublaplacian that ensures global regularity via a classical elliptic regularization technique of Kohn and Nirenberg (see also \cite{DallAra2024} for a discussion of antecedents of this result in the literature). In fact, the same argument shows that exact regularity holds for $P$ under the same condition (this is not explicitly stated in \cite{DallAra2024}, but follows from \cite[Appendix A]{DallAra2024} and \cite{KohnNirenberg1965}). 

Our geometric condition is roughly speaking the following (we refer the reader to \cite[Section 3]{DallAra2024} for details). Start with the degeneration locus $Z_0$; remove from it all points at which at least one of the vector fields $X_j$ is transversal to $Z_0$ and denote by $Z_1$ the remaining set; iterate, obtaining a descending chain of subsets $Z_0\supseteq Z_1\supseteq Z_2\supseteq\cdots$ (a rigorous definition requires to specify the notion of transversality, as the sets under consideration are not smooth in general). Notice that transversality to the smaller set $Z_1$ is a weaker condition than transversality to $Z_0$. Our sufficient geometric condition for exact regularity is \begin{equation}\label{eq:iterated_char_sets} Z_k\quad \text{is empty for some $k$}.\end{equation} 
An obvious obstruction to \eqref{eq:iterated_char_sets} is the existence of a \emph{characteristic submanifold} $\Sigma$, that is, a submanifold (typically with nonempty boundary) to which all vector fields $X_j$ are tangent. This is the situation of interest in this work. 

One may think of the notions just introduced in terms of dynamics: \eqref{eq:iterated_char_sets} means that one may escape from the degeneration locus in a small time, by flowing along finitely many vector fields $X_j$ (this is indeed the idea behind the proof of the cited result from \cite{DallAra2024}), while one cannot similarly escape from a generic point of a characteristic submanifold in a short time. One should notice that the presence of a characteristic submanifold does not by itself preclude the existence of paths connecting any pair of points in $M$, whose velocity vectors are at all times proportional to some $X_j$. In fact, one may be able to first flow inside the characteristic submanifold, and then escape through its boundary (see Lemma \ref{lem:berhanu_mendoza} below for a wide class of examples where this happens). As explained below, this "single Sussmann's orbit" condition (cf.~Section \ref{sec:sussmann} for this terminology) allows to formulate a basic $L^2$ existence theory for the PDE $Pu=f$, via an appropriate Poincaré inequality due to Omori and Kobayashi \cite{OmoriKobayashi1999}. In fact, these authors conjecture that the single Sussmann's orbit condition implies global hypoellipticity of $P$ (see \cite[p.~628]{OmoriKobayashi1999}) and indeed prove that global hypoellipticity holds in various examples where characteristic submanifolds, or even open sets foliated by characteristic submanifolds, exist.\newline 

The main contribution of this paper is the proof of a "scaling lemma" (Lemma \ref{lem:scaling}) showing that, if $\Sigma$ is a characteristic submanifold, then an obstruction to exact regularity of the sublaplacian $P$ is the existence of singular solutions of a Dirichlet problem for an operator $\mathfrak{P}$ defined on the normal bundle $N\Sigma$ (where the Dirichlet condition amounts to vanishing on $N(\partial M)$). For simplicity, only the case of characteristic hypersurfaces is discussed, but it is not difficult to generalize the result to higher codimension. The operator $\mathfrak{P}$ is obtained from $P$ by a familiar scaling procedure; in fact, it is known under the name of "normal operator" in the language of $b$-calculus (see, e.g., \cite{Melrose1991}). See the next section of this introduction for a discussion of antecedents of this scaling lemma in the literature. 

We illustrate the applicability of the scaling lemma by proving that a class of sublaplacians, which we call "worm sublaplacians", are not exactly regular (Theorem \ref{thm:worm_irreg}). These are defined on the three-dimensional boundary $M$ of a Diederich--Fornaess worm domain $\Omega\subseteq \mathbb{C}^2$, in terms of vector fields $X_1,X_2$ spanning the complex tangent directions to $M$. Diederich--Fornaess worm domains are famous objects of several complex variables, whose definition is recalled in Section \ref{sec:DF}. For our present PDE discussion, salient features of these examples are that $X_1,X_2$ and their first commutator $[X_1,X_2]$ span the tangent space of $M$ at all points outside a characteristic $2$-dimensional annulus $\Sigma$, and that the "single Sussmann orbit" condition mentioned above holds. As already mentioned, the scaling lemma reduces the proof of failure of exact regularity to the existence of certain singular solutions for a Dirichlet problem for the model operator $\mathfrak{P}$. The latter turns out to be a foliation Laplacian, and singular solutions are generated by the nontriviality of the holonomy of the zero section $\Sigma \subseteq N\Sigma$ (as indicated in Section \ref{sec:geometry}). This is, in another language, the cohomological nontriviality that is responsible for all "pathological" properties of Diederich--Fornaess worms in several complex variables. We see the failure of exact regularity of worm sublaplacians as an indication that the conjecture of Omori and Kobayashi cited above may be false without an additional assumption on the degeneration locus. 

\subsection{Motivations and antecedents} This work is motivated by the desire to understand and expand the scope of some deep results of Barrett \cite{Barrett1992} and Christ \cite{Christ1995} in several complex variables, in particular on the regularity theory of the $\bar\partial$-Neumann problem. While no particular knowledge of several complex variables is needed to understand the rest of our paper, beyond the most basic definitions of CR structure and strong pseudoconvexity of the boundary of a domain, here we assume some familiarity with the $\bar\partial$-Neumann problem (a standard reference is \cite{Straube2010}). 

Barrett discovered in \cite{Barrett1992} that the Bergman projection on Diederich--Fornaess worms is not exactly regular; its proof consists of a scaling-and-contradiction argument, that is the model for our argument here. Of course, the scheme of proof is by itself classical; interestingly, here one does not blow up a single point, but the whole characteristic annulus in the boundary of the worm domain. Later, Christ disproved global $C^\infty$ regularity for the Neumann operator on $(0,1)$-forms on worm domains, by ingeniously filling the substantive gap between failure of exact regularity (which follows from Barrett's theorem via \cite{BoasStraube1990}) and failure of global regularity. A couple of comments may help to put Christ's remarkable theorem in context. \begin{itemize}
	\item Previous results of Boas--Straube \cite{BoasStraube1991, BoasStraube1993} showed that the existence of a positive dimensional complex manifold $\Sigma$ in the boundary is not by itself an obstruction to exact regularity, provided a certain de Rham cohomology class $\alpha\in H^1_\mathrm{dR}(\Sigma; \R)$ is trivial (see also \cite{DallAraMongodi2023}, where $\alpha$ is called the D'Angelo class). Christ showed that such assumption is necessary: indeed, the D'Angelo class on worm domains is the class of a nonzero multiple of the angle form on the embedded annulus.  
	\item Bell and Ligocka \cite{BellLigocka1980} proved that, if $F:\Omega_1\rightarrow \Omega_2$ is a biholomorphism between smooth bounded pseudoconvex domains in $\mathbb{C}^n$ whose Bergman projections are globally regular, then $F$ extends smoothly to the boundary. Thus, Christ's counterexample highlights a serious obstacle to the $\bar\partial$-Neumann approach to the question of smooth extendibility of biholomorphisms. It is still unclear if and how it impacts the actual boundary behavior of holomorphic mappings between smooth bounded pseudoconvex domains. 
	\end{itemize}
Christ's proof combines tools of general scope, like the technique of reduction to a pseudodifferential boundary operator and other microanalytic manipulations, and other more ad hoc methods, like Fourier analysis on the annulus. This author believes that it would be of great interest to place Christ's result in an appropriate and more general setting. As a step in that direction, in this paper we bring Barrett's scaling technique to the setting of sublaplacians. The move is far from arbitrary, as the worm sublaplacians to which the methods of this paper are applied are exactly the principal parts of the boundary Kohn Laplacians $\Box_b$ on worm domains: with the notation of the previous section, \[\Box_b=(X_1-iX_2)^\dagger(X_1-iX_2)=X_1^\dagger X_1+X_2^\dagger X_2 +\text{lower order terms}.\] Thus, the phenomenon of exact irregularity already appears for the real operators $P=X_1^\dagger X_1+X_2^\dagger X_2$. While this result is new, the investigation of irregularity of real second order PDEs via scaling was already started by Christ. In his survey paper \cite{Christ1999}, Christ states that certain second order operators on the two-dimensional torus, which are elliptic away from a segment, fail to be globally regular; see \cite[Theorem 7.1]{Christ1999}, which refers to the preprint \cite{Christ1995} for a proof. This work, which seems to have never been published, transfers the original problem on the torus to a model \emph{Dirichlet} problem on a strip (that is, the normal bundle of the characteristic segment). Our scaling lemma (Lemma \ref{lem:scaling}) significantly generalizes the key insight of Christ: scaling turns the original problem on a manifold without boundary into a Dirichlet problem on the normal bundle of a characteristic submanifold. 

\subsection{Contents of the paper} In Section \ref{sec:basic}, we prove the basic theorems on existence and uniqueness of global "finite energy" solutions of sublaplacians on compact manifolds without boundary. Beyond classical Hilbert space theory, the key tools are Sussmann's theorem on orbits of families of vector fields and the observation of Omori and Kobayashi that all that is needed for the validity of the key Poincaré inequality is the existence of a single Sussmann's orbit (Lemma \ref{lem:omori_kobayashi}). A result of Berhanu and Mendoza guarantees that the latter condition is satisfied by a large class of sublaplacians originating from several complex variables (see Section \ref{sec:scv} for this). 

Once the foundations are laid, we pass to the technical heart of the paper, namely the scaling lemma (Lemma \ref{lem:scaling}). Its preparation, statement and proof occupy Section \ref{sec:scaling}. As we already mentioned, the construction of "model" vector fields and sublaplacians on the normal bundle of a characteristic submanifold is nothing but the familiar construction of normal operators associated to $b$-operators in the $b$-calculus. Notice, though, that our sublaplacians are not $b$-elliptic. E.g., worm sublaplacians are modeled by Heisenberg sublaplacians away from the characteristic annulus (commutators of length two suffice to span), and by a foliation Laplacian at the annulus. The author, who is not an expert on the matter, is not aware of any ready-made pseudodifferential calculus that may accomodate it. 

As a technical remark, we would like to point out the trick of extending the characteristic manifold $\Sigma$ to a larger submanifold $\Sigma_0$ by flowing along a combination of the original vectors $X_j$ that is everywhere transversal to the boundary of $\Sigma$ (see Section \ref{sec:characteristic}). This is the key to the proof that the solutions of the model problem satisfy the Dirichlet boundary condition, thus generalizing Christ's insight to our geometric setting. 

In Section \ref{sec:worm} we establish the failure of exact regularity for worm sublaplacians. What is of interest here is the geometry of the model operator $\mathfrak{P}$ that the scaling lemma lends us: $\mathfrak{P}$ can be described as a Laplacian along a foliation on the normal bundle $N\Sigma$. The nontriviality of the D'Angelo class of the annulus corresponds to nontriviality of the holonomy of this foliation at the zero section, which is the unique compact leaf. A more conceptual explanation for the appearance of a foliation is given in Section \ref{sec:geometry}. Let us finally mention that the existence of singular solutions for the model Dirichlet problem is obtained as a corollary of a complete asymptotic expansion of solutions corresponding to data supported on one side of the zero section (Proposition \ref{prp:asymptotic}). 

\subsection{Acknowledgments} I would like to thank Alessio Martini for many discussions on various topics related to the content of this paper. I acknowledge the support of Istituto Nazionale di Alta Matematica "F. Severi". 

\section{Existence of weak solutions}\label{sec:basic}

\subsection{Setup}\label{sec:setup}

Our basic data are the following: 
\begin{itemize}
\item A closed (i.e., compact and without boundary) connected smooth manifold $M$.
\item A \emph{system of vector fields}, that is, a finite collection $\mathcal{X}=\{X_1,\ldots, X_k\}$ of real smooth vector fields, globally defined on $M$. 
\item A measure $\mu$ of total mass $1$ which, when expressed in local coordinates, has a smooth positive density with respect to Lebesgue measure. Equivalently, one may think of $\mu$ as a smooth positive $1$-density on $M$. We say that $\mu$ is a \emph{normalized smooth positive measure}.
	\end{itemize}

To a triple $(M,\mathcal{X}, \mu)$ as above there corresponds an \emph{energy space}, consisting of square-integrable functions on $M$ whose derivatives along elements of $\mathcal{X}$ are also square-integrable. In order to give a precise definition, we rapidly recall a few standard facts and introduce some useful notation.\newline 

For $L$ a linear partial differential operator on $M$, we denote by $L^\dagger$ its formal adjoint with respect to the measure $\mu$, that is, the unique linear partial differential operator satisfying the identity \[
\int_M Lf\cdot g\, d\mu = \int_M f\cdot L^\dagger g\, d\mu
\] for all $f,g\in C^\infty(M)$. As usual, the above identity allows us to define $Lf$ for any distribution $f$. Here we define a distribution as a continuous linear functional on $C^\infty(M)$ (with its usual Fréchet topology). Taking advantage of the background measure, we can identify any integrable function $f\in L^1(M, \mu)$ with the distribution $g\mapsto \int_Mgf\, d\mu$. In particular, $Lf$ is defined as a distribution for every integrable $f$.

Let $\mathcal{X}$ be a system of vector fields. We define the corresponding \emph{energy space} by \[
\mathcal{D}_\mathcal{X}:=\{f\in L^2(M,\mu)\colon\, X_jf\in L^2(M,\mu)\quad \forall j\},
\]
equipped with the graph norm \begin{equation}\label{eq:graph_norm}
\lVert f\rVert_{\mathcal{D}_\mathcal{X}}:=\left(\lVert f\rVert_{L^2(M, \mu)}^2+\sum_j\lVert X_jf\rVert_{L^2(M, \mu)}^2\right)^\frac{1}{2}.
\end{equation}
The following proposition is classical. 

\begin{prp}[Density of test functions]\label{prp:density}
 The energy space $\mathcal{D}_\mathcal{X}$ is complete in the graph norm \eqref{eq:graph_norm}, and the subspace $C^\infty(M)$ is dense in this norm. 
	\end{prp}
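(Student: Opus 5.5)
Completeness follows from the continuity of each $X_j$ on distributions, and density from a Friedrichs-type mollification argument.

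\emph{Completeness.} Let $(f_n)$ be Cauchy in the graph norm. Then $f_n\to f$ in $L^2(M,\mu)$, and for each $j$ we have $X_jf_n\to g_j$ in $L^2(M,\mu)$. Since $X_j$ acts continuously on distributions (by duality with $X_j^\dagger$ on $C^\infty(M)$), for every test function $\varphi$ we get
\[
\int_M g_j\varphi\,d\mu=\lim_n\int_M f_n X_j^\dagger\varphi\,d\mu=\int_M f X_j^\dagger\varphi\,d\mu .
\]
Hence $X_jf=g_j\in L^2$, so $f\in\mathcal{D}_\mathcal{X}$ and $f_n\to f$ in the graph norm.

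\emph{Density.} Since $\mathcal{D}_\mathcal{X}$ is defined with the weak (distributional) derivative, I will show that smooth approximations converge in graph norm.

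First, localize. Fix a finite atlas $\{U_\alpha\}$ and a subordinate smooth partition of unity $\{\chi_\alpha\}$. For $f\in\mathcal{D}_\mathcal{X}$ we have $\chi_\alpha f\in\mathcal{D}_\mathcal{X}$, because $X_j(\chi_\alpha f)=\chi_\alpha X_jf+(X_j\chi_\alpha)f$; this Leibniz rule for a distribution times a smooth function is checked by duality. So it suffices to approximate a function $f$ compactly supported in a coordinate chart $U\cong$ an open subset of $\R^n$.

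In the chart, write $d\mu=\rho\,dx$ with $\rho$ smooth and positive. Then $L^2(\mu)$ and $L^2(dx)$ have equivalent norms on compact subsets. Write $X=\sum_i a_i\partial_i$, with the $a_i$ smooth (extended to be compactly supported). Let $f_\epsilon=\phi_\epsilon*f$ be the standard mollification, which is smooth and supported in $U$ for small $\epsilon$. Clearly $f_\epsilon\to f$ in $L^2$, and
\[
Xf_\epsilon=\phi_\epsilon*(Xf)+[X,\phi_\epsilon*]f .
\]
The first term converges to $Xf$ in $L^2$.

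The main step is Friedrichs' lemma, which says the commutator term tends to $0$ in $L^2$. I would argue it in two parts.

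\begin{itemize}
\item \emph{Uniform bound.} Write
\[
[X,\phi_\epsilon*]f(x)=\int \bigl(a_i(x)-a_i(y)\bigr)\partial_i\phi_\epsilon(x-y)f(y)\,dy+\bigl(\phi_\epsilon*((\partial_ia_i)f)\bigr)(x),
\]
obtained by integrating by parts in $y$. Since $|a_i(x)-a_i(y)|\le C\epsilon$ on the support of $\phi_\epsilon(x-y)$, and $\lVert\partial_i\phi_\epsilon\rVert_{L^1}\lesssim\epsilon^{-1}$, Young's inequality gives $\lVert[X,\phi_\epsilon*]f\rVert_{L^2}\le C\lVert f\rVert_{L^2}$ uniformly in $\epsilon$.
\item \emph{Convergence on a dense class.} For $f\in C_c^\infty$ the commutator tends to $0$, since both $X(\phi_\epsilon*f)$ and $\phi_\epsilon*Xf$ converge to $Xf$.
\end{itemize}

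Because $C_c^\infty$ is dense in $L^2$, these two facts together give $[X,\phi_\epsilon*]f\to0$ for every $f\in L^2$. The passage between $dx$ and $\mu$ only introduces bounded multiplicative factors. Summing over the charts then yields density of $C^\infty(M)$ in $\mathcal{D}_\mathcal{X}$.
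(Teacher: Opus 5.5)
Your proposal is correct and follows the paper's route: completeness from the continuity of $X_j$ on distributions, and density via a partition of unity that reduces to the local Friedrichs lemma, which the paper simply cites from Treves. Your self-contained proof of that lemma (a uniform $L^2$ bound on the commutator $[X,\phi_\epsilon *]$ plus convergence on $C_c^\infty$) is the standard argument and supplies exactly what the paper outsources.
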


\begin{proof}
The first half is an easy consequence of the continuity of $X_j$ as operators on the space of distributions. The second half follows immediately from Friedrichs' lemma (see, e.g., \cite[Lemma 25.4]{Treves1975} for a local formulation, from which the analogous statement on compact manifolds follows by a standard partition of unity argument). 
	\end{proof}
The next definition will be the focus of our attention in the rest of the section. 

\begin{dfn}\label{def:global_poincare}
We say that the global $L^2$ Poincaré inequality holds on $(M,\mathcal{X},\mu)$ if there exists a constant $c>0$ such that \begin{equation}\label{eq:global_poincaré}
\sum_j\lVert X_j f\rVert_{L^2(M,\mu)}^2\geq c \lVert f-\mathrm{av}_\mu(f)\rVert_{L^2(M,\mu)}^2
\end{equation} for every $f\in \mathcal{D}_\mathcal{X}$, where $\mathrm{av}_\mu(f):=\int_M f\, d\mu$ is the average of $f$. 
	\end{dfn}

Notice that the validity of the global $L^2$ Poincaré inequality depends only on the vector fields, and not on the measure. To see this, one may use the identity \[
\lVert f-\mathrm{av}_\mu(f)\rVert_{L^2(M,\mu)}^2 = \min_{a\in \mathbb{C}}\lVert f-a\rVert_{L^2(M,\mu)}^2
\] and the comparability of $L^2$ norms corresponding to different smooth positive measures.\newline 

As shown by Omori--Kobayashi \cite{OmoriKobayashi1999}, the global $L^2$ Poincaré inequality holds under the rather weak assumption that the group of diffeomorphisms generated by the flows of the vector fields $X_j$ acts transitively on $M$. For a proper discussion of this result, we first need to recall the elements of Sussmann's theory of orbits. 

\subsection{Sussmann orbits}\label{sec:sussmann} Let $\mathcal{X}=\{X_1,\ldots, X_k\}$ be a system of vector fields on $M$. We denote by $G_\mathcal{X}$ the group of diffeomorphisms of the form \begin{equation}\label{eq:flow_composition}
\exp(t_N X_{j_N})\circ \cdots \circ \exp(t_1 X_{j_1}), 
\end{equation}
for any choice of indices $j_1,\ldots, j_N\in \{1,\ldots, k\}$ and "times" $t_1,\ldots, t_N\in \R$. Here $\exp(tX)$ is the flow at time $t$ of the vector field $X$. Recall that $M$ is compact, so flows exist for all times (Sussmann's theory applies to the case of non-compact manifolds as well, although some care is needed in its formulation, because flows of smooth vector fields need not exist for all times). Following \cite{Sussmann1973}, we define the \emph{Sussmann orbits} of $\mathcal{X}$ as the orbits of the action of $G_\mathcal{X}$ on $M$. That is, two points are in the same orbit if they are connected by a piecewise smooth path consisting of integral curves of vector fields in $\mathcal{X}$. Denote by $\widetilde{\mathcal{X}}$ the set of vector fields of the form $\Phi_*X$, where $\Phi\in G_\mathcal{X}$, $X\in \mathcal{X}$, and the lower asterisk denotes the push-forward of a vector field along a diffeomorphism. We have the following crucial result (see \cite[Theorem 4.1]{Sussmann1973} for a more complete statement). 

\begin{thm}[Sussmann's Theorem]\label{thm:sussmann}
	The Sussmann orbits of $\mathcal{X}$ are immersed submanifolds whose tangent space at any point is spanned by the elements of $\widetilde{\mathcal{X}}$. 	
\end{thm}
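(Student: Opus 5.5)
The plan follows Sussmann's original strategy. Fix a point $p\in M$. For each finite sequence $\mathbf{X}=(Y_1,\ldots,Y_N)$ of elements of $\widetilde{\mathcal{X}}$ (or of $\mathcal{X}$), consider the smooth map
\[
F_{\mathbf{X},p}\colon \R^N\to M,\qquad (t_1,\ldots,t_N)\mapsto \exp(t_NY_N)\circ\cdots\circ\exp(t_1Y_1)(p).
\]
Its image lies in the orbit $\mathcal{O}_p$ of $p$, because $\exp(t\,\Phi_*X)=\Phi\circ\exp(tX)\circ\Phi^{-1}\in G_\mathcal{X}$. Let $\Delta(q)$ be the span of $\{Y(q)\colon Y\in\widetilde{\mathcal{X}}\}$. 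The first step is to show that $\Delta$ is $G_\mathcal{X}$-invariant, i.e.\ $\Phi_*\Delta=\Delta$ for every $\Phi\in G_\mathcal{X}$. This is immediate, since $\Phi_*(\Psi_*X)=(\Phi\circ\Psi)_*X$ and $G_\mathcal{X}$ is a group. It follows that $\dim\Delta$ is constant along each orbit, say equal to $r$ on $\mathcal{O}_p$.

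Second step: compute the differential of $F_{\mathbf{X},p}$. By the chain rule, $\partial_{t_i}F$ at $t$ equals $(\Phi_{>i})_*Y_i$ evaluated at $F(t)$, where $\Phi_{>i}=\exp(t_NY_N)\circ\cdots\circ\exp(t_{i+1}Y_{i+1})$. This vector lies in $\Delta(F(t))$ by invariance. So every such map has rank at most $r$ everywhere. Conversely, choose vectors $Y_1,\ldots,Y_r\in\widetilde{\mathcal{X}}$ with $Y_1(p),\ldots,Y_r(p)$ spanning $\Delta(p)$. The map $(t_1,\ldots,t_r)\mapsto\exp(t_rY_r)\circ\cdots\circ\exp(t_1Y_1)(p)$ has injective differential at $0$, so near $0$ it is an embedding of an $r$-dimensional slice $S_p$ through $p$. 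Its tangent spaces are contained in $\Delta$, and by dimension count they equal $\Delta$. Translating by elements of $G_\mathcal{X}$ gives such slices through every point of $\mathcal{O}_p$.

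Third step: put on $\mathcal{O}_p$ the topology generated by the images of open subsets of these slice maps, together with the manifold charts they define. One must check compatibility of charts and that the resulting space is a second countable (or at least paracompact) manifold whose inclusion into $M$ is an injective immersion. For compatibility, take two slices $S,S'$ meeting at $q$. Any point of $S'$ near $q$ is $\exp(s_r Y'_r)\cdots(q)$. Composing with the inverse of the $S$-chart requires showing that $S'$ locally lies inside $S$. Here the key tool is a rank-constancy argument. Consider the enlarged map $(t,s)\mapsto F'_s\circ F_t$: it has constant rank $r$ (at most $r$ by the second step, at least $r$ on the $t$-directions). By the constant rank theorem, its image near $(0,0)$ is an $r$-dimensional embedded submanifold containing both slices near $q$. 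This forces the two slices to agree locally, giving smooth transition maps. Second countability follows because $G_\mathcal{X}$ is generated by countably many elements with rational times up to small perturbations, or equivalently because each orbit is covered by countably many slices; one also uses that $M$ is second countable.

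The main obstacle I expect is the third step, specifically the rank-constancy/local-containment argument. It relies crucially on using all of $\widetilde{\mathcal{X}}$ rather than the Lie brackets of $\mathcal{X}$. The invariance of $\Delta$ under $G_\mathcal{X}$ is exactly what gives the upper bound on the rank of the composed flow maps. Without it, as in the non-integrable smooth case of the Nagano--Hermann approach, the dimension could jump. I would handle this carefully by the constant rank theorem, as above. Finally, the statement about tangent spaces is read off from the slice charts: $T_q\mathcal{O}_p=\Delta(q)$, the span of $\widetilde{\mathcal{X}}$ at $q$.
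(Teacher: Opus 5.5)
The paper gives no proof of this statement; it simply quotes \cite[Theorem 4.1]{Sussmann1973}, so there is no internal argument to compare with. Your outline is a sound reconstruction of the standard orbit-theorem argument. $G_\mathcal{X}$-invariance of $\Delta$ gives constancy of $r=\dim\Delta$ on $\mathcal{O}_p$ and, through your chain-rule formula, the bound rank $\leq r$ for every composed-flow map at every base point. Slices built from $r$ elements of $\widetilde{\mathcal{X}}$ spanning $\Delta$ are $r$-dimensional integral pieces of $\Delta$, and the constant-rank theorem does give compatibility of charts.

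One small repair there: the sentence ``any point of $S'$ near $q$ is $\exp(s_rY'_r)\cdots(q)$'' presumes $S'$ is based at $q$. In general, use $H(t,s)=F'_s\circ(F'_{s_0})^{-1}(F_t(p))$, where $F_{t_0}(p)=q=F'_{s_0}(p')$, so that $H(\cdot,s_0)$ and $H(t_0,\cdot)$ parametrize the two slices. The $s$-derivatives of $H$ are then controlled by your Step 2 at the base point $(F'_{s_0})^{-1}(F_t(p))$ rather than at $p$, which your computation covers verbatim.

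The step that is genuinely not proved is second countability. ``Rational times up to small perturbations'' does not work as stated, because slices have no uniform size: $F_{\mathbf{X},p}(t)$ being close in $M$ to $F_{\mathbf{X},p}(t^0)$ with $t^0\in\mathbb{Q}^N$ does not put it on the slice through $F_{\mathbf{X},p}(t^0)$. Second countability of $M$ does not help by itself either: the union of all horizontal lines in $\R^2$, with the leaf topology, is covered by slices but is not second countable. What singles out one orbit is that it is swept out from $p$ by the countably many maps $F_{\mathbf{X},p}$, with $\mathbf{X}$ a finite word in the letters of $\mathcal{X}$.

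The missing lemma is that each $F_{\mathbf{X},p}\colon\R^N\to\mathcal{O}_p$ is continuous (indeed smooth) for your slice topology. It follows from your own tool. Write $F_t$ for the diffeomorphism $\exp(t_NX_{j_N})\circ\cdots\circ\exp(t_1X_{j_1})$, let $x=F_{t^*}(p)$, and let $\phi_x$ be a slice chart with $\phi_x(s_0)=x$. The map $(s,t)\mapsto F_t\circ F_{t^*}^{-1}(\phi_x(s))$ has rank exactly $r$ near $(s_0,t^*)$: at most $r$ by invariance, at least $r$ from the $s$-directions. So its local image is an embedded $r$-manifold containing a neighborhood of $x$ in the slice and all $F_t(p)$ with $t$ near $t^*$. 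Hence $\phi_x^{-1}\circ F_{\mathbf{X},p}$ is smooth near $t^*$.

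Then $\mathcal{O}_p$ is a countable union of the $\sigma$-compact sets $F_{\mathbf{X},p}(\R^N)$, hence second countable. Hausdorffness is automatic, since the slice topology is finer than the subspace topology. This is not cosmetic here: the ``only if'' half of Corollary \ref{cor:sussmann}, used in Lemma \ref{lem:omori_kobayashi}, needs exactly this countability. For instance, apply Sard's theorem to the countably many maps $F_{\mathbf{X},p}$, all of rank $\leq r<\dim M$ by your Step 2.
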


The following corollary is almost immediate. 

\begin{cor}\label{cor:sussmann}
The group $G_\mathcal{X}$ acts transitively on $M$ (that is, there is a single Sussmann orbit)  if and only if the elements of $\widetilde{\mathcal{X}}$ span the tangent space of $M$ at each point.	
	\end{cor}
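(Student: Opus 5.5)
The plan is to derive both implications from Theorem \ref{thm:sussmann}, using that $M$ is connected and that Sussmann orbits are immersed submanifolds.

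\emph{Necessity.} Assume $G_\mathcal{X}$ acts transitively. Then $M$ itself is the unique Sussmann orbit. Theorem \ref{thm:sussmann} says the tangent space of an orbit at each point is spanned by the elements of $\widetilde{\mathcal{X}}$. Since the orbit is $M$, with the immersed structure coinciding with the given manifold structure, this tangent space is $T_pM$. Hence $\widetilde{\mathcal{X}}$ spans $T_pM$ at every $p$.

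The one point needing care is that the immersed-submanifold structure on the orbit $M$ might a priori differ from the original one. I would handle this with a dimension count. An immersed submanifold of $M$ has dimension at most $\dim M$. If its dimension were strictly smaller, its image would have measure zero; Sussmann orbits are second countable, so Sard's theorem applies. But the image is all of $M$, so the dimension equals $\dim M$. An injective immersion between manifolds of equal dimension is a local diffeomorphism, so its tangent spaces are the full $T_pM$.

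\emph{Sufficiency.} Assume $\widetilde{\mathcal{X}}$ spans $T_pM$ at each $p$. By Theorem \ref{thm:sussmann}, every orbit is then an immersed submanifold of full dimension $\dim M$. The inclusion is therefore a local diffeomorphism, so each orbit is an open subset of $M$. The orbits partition $M$ into disjoint open sets, so each orbit is also closed, being the complement of the union of the others. Since $M$ is connected and nonempty, there is exactly one orbit, which means $G_\mathcal{X}$ acts transitively.

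There is no real obstacle here. The only subtle point is the identification of tangent spaces in the necessity direction, and the dimension argument above handles it.
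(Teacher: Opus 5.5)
Your proof is correct and takes the same route as the paper: sufficiency comes from Sussmann's theorem making every orbit open, followed by connectedness of $M$, and necessity comes directly from Theorem \ref{thm:sussmann}. Your Sard-based dimension count only makes explicit the identification of the orbit's tangent space with $T_pM$, which the paper dismisses as obvious.
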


\begin{proof}
The ``only if'' part is obvious. If $\widetilde{\mathcal{X}}$ spans the tangent space, by Sussmann's theorem all orbits are open. By connectedness, there must be only one orbit. 
	\end{proof}

The above theorem reduces the transitivity of $G_\mathcal{X}$ to the infinitesimal transitivity of a global object, namely the expanded set of vector fields $\widetilde{\mathcal{X}}$. Sussmann also showed that the distribution pointwise spanned by $\widetilde{\mathcal{X}}$ is involutive, see \cite[Theorem 4.1; part (d)]{Sussmann1973}, so in particular the condition of Corollary \ref{cor:sussmann} for the existence of a single orbit is satisfied when the system of vector fields $\mathcal{X}$ satisfies Hörmander's commutator condition, that is, when the vector fields $X_j$ together with all their iterated commutators of all orders span the tangent space of $M$ at each of its points. This recovers the well-known Chow--Rashevsky Theorem as a (very) special case of Corollary \ref{cor:sussmann}. 

\subsection{Global $L^2$ Poincaré inequality}

As anticipated above, we have the following result, whose a priori formulation appears in \cite{OmoriKobayashi1999}. 

\begin{lem}[Omori--Kobayashi]\label{lem:omori_kobayashi}
	Let $(M, \mathcal{X}, \mu)$ be as in Section \ref{sec:setup}, with $\mathcal{X}=\{X_1,\ldots, X_k\}$. If $G_\mathcal{X}$ acts transitively on $M$, then the global $L^2$ Poincaré inequality holds on $(M,\mathcal{X}, \mu)$, that is, there exists $c>0$ such that \[
	\sum_j \lVert X_j f\rVert_{L^2(M,\mu)}^2\geq c\lVert f-\mathrm{av}_\mu(f)\rVert_{L^2(M,\mu)}^2	\qquad \forall f\in \mathcal{D}_\mathcal{X}.\] 
	\end{lem}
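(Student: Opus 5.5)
We argue by contradiction combined with compactness, using Corollary \ref{cor:sussmann} to produce a uniform local statement. Suppose the inequality fails. Then there are $f_n\in\mathcal{D}_\mathcal{X}$ with $\mathrm{av}_\mu(f_n)=0$, $\lVert f_n\rVert_{L^2}=1$ and $\sum_j\lVert X_jf_n\rVert_{L^2}\to 0$; by Proposition \ref{prp:density} we may take $f_n\in C^\infty(M)$. Since $\mathcal{D}_\mathcal{X}$ does not embed compactly in $L^2$ in general, we avoid Rellich. Instead we show directly that $f_n$ is close in $L^2$ to a constant $a_n$. This forces $|a_n|\to 0$, because $\mathrm{av}(f_n)=0$, which contradicts $\lVert f_n\rVert=1$. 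An equivalent and cleaner plan is to prove the a priori estimate
\[
\lVert f-\mathrm{av}_\mu f\rVert_{L^2}^2\le C\sum_j\lVert X_jf\rVert_{L^2}^2
\]
directly for smooth $f$.

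\textbf{Step 1 (translation estimate).} For $\Phi=\exp(tX_j)$ and smooth $f$, write $f\circ\Phi-f=\int_0^t (X_jf)\circ\exp(sX_j)\,ds$. Since the flows distort $\mu$ only by bounded Jacobians on compact time intervals, this gives $\lVert f\circ\Phi-f\rVert_{L^2}\le C|t|\,\lVert X_jf\rVert_{L^2}$. Iterating along compositions \eqref{eq:flow_composition}, for any fixed word with bounded times we get $\lVert f\circ\Phi-f\rVert\le C_\Phi\sum_j\lVert X_jf\rVert$, with $C_\Phi$ uniform over $\Phi$ ranging in a compact family of parameters $(t_1,\dots,t_N)$.

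\textbf{Step 2 (local submersion).} By Corollary \ref{cor:sussmann} and transitivity, for each $p\in M$ there is a word $j_1,\dots,j_N$ and a parameter $\bar t$ such that the map $F_p(t)=\Phi_t(p)$, with $\Phi_t$ the composition in \eqref{eq:flow_composition}, is a submersion at $\bar t$ onto a neighborhood of $p$. This is the standard Sussmann/Krener-type argument: the orbit is open, and the rank of such maps is eventually maximal. Since the submersion property is open in $(p,t)$, it holds for $(q,t)$ near $(p,\bar t)$. By compactness of $M$, finitely many such words cover $M$.

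\textbf{Step 3 (averaging).} Combine Steps 1 and 2 through the identity
\[
\iint |f(x)-f(y)|^2\,d\mu(x)\,d\mu(y)=2\lVert f-\mathrm{av}f\rVert^2.
\]
Using connectedness and chaining through overlapping neighborhoods, it suffices to bound $\int_U\int_U|f(x)-f(y)|^2$ for small sets $U$ from the cover. For $y$ near $x$, write $y=\Phi_t(x)$ with $t$ in a small ball $B$; the submersion property lets us change variables from $(x,y)$ to $(x,t)$ with a bounded Jacobian after averaging in $t$. Concretely,
\[
\int_U\int_U|f(x)-f(y)|^2\lesssim\int_B\int_M |f(\Phi_t x)-f(\Phi_{\bar t}x)|^2\,d\mu(x)\,dt,
\]
where the reference point $f(\Phi_{\bar t}x)$ is handled by the triangle inequality and Step 1. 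Step 1 bounds each term by $C\sum_j\lVert X_jf\rVert^2$.

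The main obstacle is Step 2 together with the change of variables in Step 3: making the submersion uniform in the base point, and justifying that the pushforward of $\mu\otimes dt$ under $(x,t)\mapsto(x,\Phi_t x)$ dominates $\mu\otimes\mu$ locally. The first thing to try is to fix, for each $p$, coordinates $t=(t',t'')$ in which $t'\mapsto\Phi_{(t',t'')}(q)$ is a local diffeomorphism onto a neighborhood of $q$, uniformly for $q$ near $p$. Fubini together with the inverse function theorem, with constants controlled by compactness, then yields the domination.
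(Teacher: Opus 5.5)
Your argument is correct, and it takes a genuinely different route from the paper's.

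\textbf{The paper's route.} The paper never works with finite displacements. It uses Corollary \ref{cor:sussmann} to pick finitely many $Y_1,\dots,Y_N\in\widetilde{\mathcal{X}}$ spanning $TM$, and invokes the elliptic bound $\lVert f-\mathrm{av}_\mu f\rVert_{L^2}\lesssim\sum_\ell\lVert Y_\ell f\rVert_{H^{-1}}$. It then proves $\lVert Yf\rVert_{H^{-1}}\lesssim(\sum_j\lVert X_jf\rVert_{L^2}^2)^{1/2}$ for every $Y=\Phi_*X$ by induction on the number of flows in $\Phi$, using $\exp(tX')_*Y'=\exp(-tX')^*\circ Y'\circ\exp(tX')^*$ and the fundamental theorem of calculus. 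This is short: it needs only the infinitesimal spanning statement, and lets negative-order Sobolev norms and elliptic theory (including the compactness step needed to drop the lower-order term) do the integration.

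\textbf{Your route.} You control the $L^2$ modulus of continuity of $f$ under $G_\mathcal{X}$. You then convert it into the Poincar\'e inequality by a change of variables through a submersive composition of flows, followed by chaining over a finite cover. This is more elementary (fundamental theorem of calculus, Fubini, the submersion theorem). It gives a more transparent constant and carries over verbatim to $L^p$, $1\le p<\infty$.

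\textbf{Step 2 needs firming up.} You justify it only loosely, and your phrasing conflates $p$ with $\Phi_{\bar t}(p)$. The cleanest fix is to derive it from Corollary \ref{cor:sussmann} itself. Suppose $Y_\ell=(\Psi_\ell)_*X_{j_\ell}$, $\ell=1,\dots,n$, span $T_pM$. Then the map $s\mapsto\Psi_1\exp(s_1X_{j_1})\Psi_1^{-1}\circ\cdots\circ\Psi_n\exp(s_nX_{j_n})\Psi_n^{-1}(p)$ has three properties:
\begin{enumerate}
\item it is a composition of flows of the $X_j$, since $\Psi\exp(sX)\Psi^{-1}=\exp(s\Psi_*X)$;
\item it equals $p$ at $s=0$;
\item its differential at $s=0$ is $(Y_1(p),\dots,Y_n(p))$.
\end{enumerate}
Hence $(x,s)\mapsto(x,\Phi_s x)$ is a submersion at $(p,0)$ onto a neighbourhood of $(p,p)$. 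This removes the need for your reference point $\Phi_{\bar t}x$. A Sard argument over the countably many words also works.

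\textbf{A small caution in Step 1.} Peel off the outermost flow, $f\circ(A\circ B)-f=(f\circ A-f)\circ B+(f\circ B-f)$. That way only $X_jf$ composed with diffeomorphisms appears, rather than $X_j$ applied to $f\circ(\text{diffeomorphism})$, which you do not control.
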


\begin{proof} By Proposition \ref{prp:density}, it is enough to prove the estimate for $f\in C^\infty(M)$. This is the content of \cite[Theorem 2.1]{OmoriKobayashi1999}. We reproduce the proof for the sake of completeness. By the single-orbit case of Sussmann's Theorem (Corollary \ref{cor:sussmann}) and compactness of $M$, there exist $Y_1,\ldots, Y_N\in \widetilde{\mathcal{X}}$ that span the tangent space of $M$ at each point. We have the standard global ellipticity bound \[
	\lVert f-\mathrm{av}_\mu(f)\rVert_{L^2(M,\mu)}^2\leq C\sum_\ell\lVert Y_\ell f\rVert_{H^{-1}}^2, 
	\]
	where $C$ is a constant and $\lVert \cdot \rVert_{H^s}$ denotes the $L^2$-Sobolev norm of order $s$. Our goal now boils down to proving that for every $Y\in \widetilde{\mathcal{X}}$ there exists $C_Y>0$ such that \begin{equation}\label{eq:omori_kobayashi_induction}
		\lVert Yf\rVert_{H^{-1}}^2\leq C_Y\sum_j \lVert X_j f\rVert_{L^2(M,\mu)}^2\qquad \forall f\in C^\infty(M).\end{equation} 
	Let $Y=\Phi_*X$, where $\Phi\in G_\mathcal{X}$ and $X\in \mathcal{X}$. Since $\Phi$ is the composition of $N$ flows (as in \eqref{eq:flow_composition}), we may argue by induction on $N$. The case $N=0$ is trivial. Assume then that $Y=\exp(tX')_*Y'$, where $t\in \R$, $X'\in \mathcal{X}$, and inequality \eqref{eq:omori_kobayashi_induction} holds with $Y'$ in place of $Y$. We use the identity \[
	\exp(tX')_*Y'=\exp(-tX')^*\circ Y'\circ \exp(tX')^*, 
	\]
	where we view vector fields as operators on functions, and the upper asterisk denotes the pull-back operator (on functions). Since pull-backs along diffeomorphisms are bounded on Sobolev spaces of every order, it is enough to prove \eqref{eq:omori_kobayashi_induction} with $Y'\circ \exp(tX')^*$ in place of $Y$. By the Fundamental Theorem of Calculus, \[
	Y'\circ \exp(tX')^*f = Y'f+\int_0^t Y'\circ \exp(sX')^*(X'f)\, ds, 
	\]
	and by sub-additivity of the $H^{-1}$ norm, \begin{equation}\label{eq:omori_kobayashi_induction_2}
	\lVert Y'\circ \exp(tX')^*f\rVert_{H^{-1}}\leq \lVert Y'f\rVert_{H^{-1}}+ \int_{\min\{0,t\}}^{\max\{0,t\}}\lVert Y'\circ \exp(sX')^*(X'f)\rVert_{H^{-1}}\, ds.
	\end{equation}
	By the inductive assumption, $\lVert Y'f\rVert_{H^{-1}}^2\leq C_{Y'}\sum_j \lVert X_j f\rVert_{L^2(M,\mu)}^2$, so we are left to deal with the second term on the right hand side. 
	
	Using again that pull-backs preserve the Sobolev scale, plus the fact that $Y'$ is bounded from $H^0$ to $H^{0-1}$, we see that $\lVert Y'\circ \exp(sX')^*(X'f)\rVert_{H^{-1}}\leq C(s)\lVert X'f\rVert_{H^0}$. Here the constant $C(s)$ depends on finitely many $C^k$ norms of the components of $\exp(sX')$, computed in an appropriate finite collection of local charts. Thus, $C(s)$ remains bounded as $s$ varies in the compact interval between $0$ and $t$. The $H^0$ and $L^2(M,\mu)$ norms are equivalent, so the integral in \eqref{eq:omori_kobayashi_induction_2} is bounded by a constant times $\lVert X'f\rVert_{L^2(M,\mu)}$. Since $X'\in \mathcal{X}$, the right hand side of \eqref{eq:omori_kobayashi_induction_2} is bounded by a constant times $\left(\sum_j \lVert X_j f\rVert_{L^2(M,\mu)}^2\right)^\frac{1}{2}$. This completes the inductive step, and the proof of the lemma. \end{proof}

\subsection{Global weak solutions of sum-of-squares operators}\label{sec:global_solutions}

We continue working on a closed connected smooth manifold $M$, equipped with a normalized smooth positive measure $\mu$ and a system of vector fields $\mathcal{X}=\{X_1,\ldots, X_k\}$. From this data, we may define the sum-of-squares operator \[
P=\sum_j X_j^\dagger X_j. 
\]
By standard Hilbert space methods, the global $L^2$ Poincaré inequality \eqref{eq:global_poincaré} yields existence of weak solutions in the energy space for the operator $P$. This is recorded in the next result. 
 
\begin{thm}[Existence and uniqueness of global finite energy solutions]\label{thm:global_solutions}
Assume that $(M, \mathcal{X}, \mu)$	satisfies the global $L^2$ Poincaré inequality. Let $f\in L^2(M,\mu)$. The equation \[Pu=f\] admits a (distributional) solution $u\in \mathcal{D}_\mathcal{X}$ if and only if \[
\mathrm{av}_\mu(f)=\int_M f\, d\mu=0.
\] In this case, there exists a unique solution $u_0\in \mathcal{D}_\mathcal{X}$ of zero average and the general solution is of the form $u=u_0+k$, where $k\in \mathbb{C}$. Moreover, \begin{equation}\label{eq:energy_estimate}
\lVert u_0\rVert_{L^2(M,\mu)}^2\leq c^{-2}\lVert f\rVert_{L^2(M,\mu)}^2, \quad \sum_j\lVert X_ju_0\rVert_{L^2(M,\mu)}^2\leq c^{-1}\lVert f\rVert_{L^2(M,\mu)}^2, 
\end{equation}
where $c$ is the constant appearing in the global $L^2$ Poincaré inequality. 
	\end{thm}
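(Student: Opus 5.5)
The plan is the standard Lax--Milgram/Riesz argument on the closed subspace of zero-average functions in the energy space. Set $\mathcal{D}^0:=\{u\in\mathcal{D}_\mathcal{X}\colon \mathrm{av}_\mu(u)=0\}$. It is closed in $\mathcal{D}_\mathcal{X}$, since $\mathrm{av}_\mu$ is continuous for the $L^2$ norm, hence for the graph norm, and $\mathcal{D}_\mathcal{X}$ is complete by Proposition \ref{prp:density}. On $\mathcal{D}^0$ I will use the form $Q(u,v):=\sum_j\int_M X_ju\cdot \overline{X_jv}\,d\mu$. By the global $L^2$ Poincaré inequality \eqref{eq:global_poincaré}, $Q(u,u)\ge c\lVert u\rVert_{L^2}^2$ on $\mathcal{D}^0$. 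So $Q$ is an inner product on $\mathcal{D}^0$ whose norm is equivalent to the graph norm, which makes $(\mathcal{D}^0,Q)$ a Hilbert space.

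\textbf{Necessity.} Suppose $u\in\mathcal{D}_\mathcal{X}$ solves $Pu=f$ distributionally. Testing against the constant function $1$ gives $\int_M f\,d\mu=\langle Pu,1\rangle=\sum_j\int_M X_ju\cdot X_j1\,d\mu=0$. The middle equality is justified by the definition of $X_j^\dagger$ acting on distributions, since $X_ju\in L^2$ and $X_j1=0$.

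\textbf{Existence.} Given $f$ with zero average, the functional $v\mapsto\int_M f\bar v\,d\mu$ is bounded on $(\mathcal{D}^0,Q)$, because $|\int f\bar v|\le\lVert f\rVert\,\lVert v\rVert\le c^{-1/2}\lVert f\rVert\,Q(v,v)^{1/2}$. The Riesz representation theorem then yields $u_0\in\mathcal{D}^0$ with $Q(u_0,v)=\int f\bar v$ for all $v\in\mathcal{D}^0$. This identity extends to all $v\in\mathcal{D}_\mathcal{X}$: writing $v=(v-\mathrm{av}(v))+\mathrm{av}(v)$, the constant part contributes $0$ to both sides, on the right because $f$ has zero average. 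Taking $v=\bar\varphi$ with $\varphi\in C^\infty(M)$ gives exactly the distributional equation $Pu_0=f$, by unwinding the definition of $X_j^\dagger$ applied to $X_ju_0\in L^2$.

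\textbf{Uniqueness.} Let $u\in\mathcal{D}_\mathcal{X}$ satisfy $Pu=0$. Then $\sum_j\int X_ju\,X_j\varphi\,d\mu=0$ for all smooth $\varphi$. By the density statement of Proposition \ref{prp:density}, this extends to all $\varphi\in\mathcal{D}_\mathcal{X}$; taking $\varphi=\bar u$ gives $\sum_j\lVert X_ju\rVert^2=0$. The Poincaré inequality then forces $u$ to equal its average, so $u$ is constant. Hence the general solution is $u_0+k$.

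\textbf{Estimates.} Take $v=u_0$ in the weak formulation. This gives $\sum_j\lVert X_ju_0\rVert^2=\int f\bar u_0\le\lVert f\rVert\,\lVert u_0\rVert$. Combined with Poincaré, $c\lVert u_0\rVert^2\le\lVert f\rVert\,\lVert u_0\rVert$, so $\lVert u_0\rVert\le c^{-1}\lVert f\rVert$. Substituting back gives $\sum_j\lVert X_ju_0\rVert^2\le c^{-1}\lVert f\rVert^2$, which is \eqref{eq:energy_estimate}.

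The only delicate point is the passage between the weak formulation and the distributional equation for merely $L^2$ data, namely extending the test class from smooth functions to the energy space. Proposition \ref{prp:density} handles this, so I expect no genuine obstacle.
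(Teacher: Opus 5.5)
Your proposal is correct and follows essentially the same route as the paper: Riesz representation for the form $q$ on the closed zero-average subspace $\mathcal{D}_\mathcal{X}^0$, extension of the weak identity to constants using $\mathrm{av}_\mu(f)=0$, necessity by testing on $1$, and uniqueness and the estimates from the Poincar\'e inequality. The only cosmetic differences are that you get the energy bound by testing the weak equation with $u_0$ rather than from the norm of the Riesz functional, and that you spell out the density argument behind uniqueness, which the paper leaves implicit.
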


\begin{proof}
The Hermitian form \[q(u,v):=\sum_j \int_MX_ju\cdot \overline{X_j v}\, d\mu\] is well-defined on $\mathcal{D}_\mathcal{X}$. The global $L^2$ Poincaré inequality may be rewritten as \begin{equation}\label{eq:global_poincaré_2}
q(u,u) \geq c\lVert u-\mathrm{av}_\mu(u)\rVert_{L^2(M,\mu)}^2
\end{equation} for all $u\in \mathcal{D}_\mathcal{X}$ and some $c>0$. Thus, $q$ is positive definite and comparable with the square of the graph norm \eqref{eq:graph_norm} on the subspace \[
\mathcal{D}_\mathcal{X}^0:=\{u\in \mathcal{D}_\mathcal{X}\colon\, \mathrm{av}_\mu(u)=0\}.
\]
Since $\mathcal{D}_\mathcal{X}^0$ is closed with respect to the graph norm \eqref{eq:graph_norm}, it follows that $\mathcal{D}_\mathcal{X}^0$ is a Hilbert space with respect to $q$. Let $f\in L^2(M,\mu)$. By \eqref{eq:global_poincaré_2} and Cauchy--Schwarz, the anti-linear form $v\mapsto \int_Mf\overline{v}\, d\mu$ is bounded on $\mathcal{D}_\mathcal{X}^0$ with norm $\leq c^{-\frac{1}{2}}\lVert f\rVert_{L^2(M,\mu)}$. By the Riesz representation theorem, there exists a unique $u_0\in \mathcal{D}_\mathcal{X}^0$ such that \begin{equation}\label{eq:weak_equation}
q(u_0,v)= \int_Mf\overline{v} \, d\mu\qquad 
\end{equation} for all $v\in \mathcal{D}_{\mathcal{X}}^0$, with accompanying estimate \begin{equation}\label{eq:riesz_1}
q(u_0,u_0)\leq c^{-1}\lVert f\rVert_{L^2(M,\mu)}^2. 
\end{equation} Combining this with \eqref{eq:global_poincaré_2}, we obtain \begin{equation}\label{eq:riesz_2}
\lVert u_0\rVert_{L^2(M,\mu)}^2\leq c^{-2}\lVert f\rVert_{L^2(M,\mu)}^2. 
\end{equation}
If the datum $f$ has zero average, \eqref{eq:weak_equation} trivially holds for $v$ constant, and hence for all $v\in \mathcal{D}_\mathcal{X}$. Since $u_0\in\mathcal{D}_\mathcal{X}$, the validity of \eqref{eq:weak_equation} for all $v\in \mathcal{D}_\mathcal{X}$ is equivalent to the distributional identity $Pu_0=f$ (here one uses Proposition \ref{prp:density} to pass from test functions to general $v\in \mathcal{D}_\mathcal{X}$). On the other hand, testing \eqref{eq:weak_equation} on $v\equiv 1$, we find that the condition $\mathrm{av}_\mu(f)=0$ is necessary for the existence of finite energy solutions. Finally, uniqueness of solutions up to constants, or equivalently constancy of solutions of the homogeneous equation $Pu=0$, follows immediately from the global $L^2$ Poincaré inequality. 
	\end{proof}

In view of Lemma \ref{lem:omori_kobayashi}, we have the following corollary. 

\begin{cor}\label{cor:global_solutions}
		Assume that $M$ is a closed connected manifold, equipped with a normalized smooth positive measure $\mu$ and a system of vector fields $\mathcal{X}=\{X_1,\ldots, X_k\}$. If $G_\mathcal{X}$ acts transitively on $M$, then the conclusions of Theorem \ref{thm:global_solutions} hold true. 
	\end{cor}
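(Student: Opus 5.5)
The corollary follows by chaining two results already proved, so the plan is a short verification that their hypotheses and conclusions match.

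\emph{Step 1.} Under the stated assumptions ($M$ closed and connected, $\mu$ a normalized smooth positive measure, $G_\mathcal{X}$ acting transitively), Lemma \ref{lem:omori_kobayashi} applies directly. It gives a constant $c>0$ such that
\[
\sum_j \lVert X_j f\rVert_{L^2(M,\mu)}^2\geq c\lVert f-\mathrm{av}_\mu(f)\rVert_{L^2(M,\mu)}^2
\]
for every $f\in \mathcal{D}_\mathcal{X}$.

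\emph{Step 2.} This inequality is exactly the global $L^2$ Poincaré inequality of Definition \ref{def:global_poincare} on the triple $(M,\mathcal{X},\mu)$. Hence the only hypothesis of Theorem \ref{thm:global_solutions} holds, and all of its conclusions follow. These are: solvability of $Pu=f$ in $\mathcal{D}_\mathcal{X}$ exactly when $\mathrm{av}_\mu(f)=0$; existence of a unique zero-average solution $u_0$, with uniqueness up to additive constants; and the energy estimates \eqref{eq:energy_estimate}, with the same constant $c$ as in Step 1.

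\emph{Main point to check.} The one thing to confirm is that Lemma \ref{lem:omori_kobayashi} gives the inequality on all of $\mathcal{D}_\mathcal{X}$, not only on $C^\infty(M)$. Its proof already handles this: it extends from smooth functions to the energy space via Proposition \ref{prp:density}, using that both sides of the inequality are continuous in the graph norm \eqref{eq:graph_norm}. There is therefore no genuine obstacle, and the corollary is immediate.
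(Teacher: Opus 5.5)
Your proposal is correct and matches the paper's argument exactly. The paper likewise feeds the global $L^2$ Poincaré inequality from Lemma \ref{lem:omori_kobayashi}, which is stated on all of $\mathcal{D}_\mathcal{X}$ via the density reduction of Proposition \ref{prp:density}, into Theorem \ref{thm:global_solutions}.
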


\subsection{Examples from several complex variables}\label{sec:scv} We now show that analysis in several complex variables provides a natural source of sum-of-squares operators to which the theory discussed above applies. 

Let $\Omega\subseteq \mathbb{C}^n$ be a smooth domain. Then the boundary $M=\partial\Omega$ is a smooth real hypersurface of $\mathbb{C}^n$. As such, it comes equipped with a natural rank $2n-2$ sub-bundle of its tangent bundle $TM$, namely \begin{equation}\label{eq:horizontal_bundle}
HM=TM\cap JTM, 
\end{equation}
where $J$ is the endomorphism acting as multiplication by $i$ on tangent vectors of $\mathbb{C}^n$. We say that a system of vector fields $\mathcal{X}=\{X_1,\ldots, X_k\}$ generates $HM$ if they span $HM$ at each point of $M$ (in particular, $k$ may be larger than $2n-2$). We have the following result of Berhanu--Mendoza (\cite[Corollary 4.6]{BerhanuMendoza1997}). For the reader's convenience, we extract from their paper the elegant proof of this particular statement. 

\begin{lem}[Berhanu--Mendoza]\label{lem:berhanu_mendoza} Let $n\geq 2$. Let $\Omega\subseteq \mathbb{C}^n$ be a smooth bounded domain with connected boundary $M$. Let $\mathcal{X}$ be a system of vector fields generating $HM$. Then $G_\mathcal{X}$ acts transitively on $M$. 
	\end{lem}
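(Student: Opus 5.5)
The plan is to analyze the Sussmann orbits of $\mathcal{X}$ with Theorem \ref{thm:sussmann}, show that every non-open orbit is a complex hypersurface of $\mathbb{C}^n$ contained in $M$, and rule these out with the maximum principle for a strictly plurisubharmonic function. Connectedness of $M$ then finishes the argument, exactly as in the proof of Corollary \ref{cor:sussmann}.

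\emph{Step 1: dimension of orbits.} Let $\mathcal{O}$ be a Sussmann orbit. By Theorem \ref{thm:sussmann}, $\mathcal{O}$ is an immersed submanifold, and its tangent space at each point $q$ is spanned by $\widetilde{\mathcal{X}}$. In particular $T_q\mathcal{O}\supseteq \mathrm{span}\{X_j(q)\}=H_qM$, which has real dimension $2n-2$ inside $T_qM$ of dimension $2n-1$. Since orbit dimension is constant along an orbit, each orbit is either open in $M$ or has dimension exactly $2n-2$. In the latter case $T_q\mathcal{O}=H_qM$ for every $q\in\mathcal{O}$.

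\emph{Step 2: low-dimensional orbits are complex.} In the second case, each $T_q\mathcal{O}=H_qM=T_qM\cap JT_qM$ is $J$-invariant. A real immersed submanifold of $\mathbb{C}^n$ whose tangent spaces are all $J$-invariant is a complex submanifold: locally it is a graph over a complex $(n-1)$-plane, and $J$-invariance of the tangent spaces amounts to the Cauchy--Riemann equations for the graphing function. Thus every non-open orbit is an immersed complex manifold of complex dimension $n-1\geq 1$.

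\emph{Step 3: exclusion by maximum principle.} Let $U$ be the union of the open orbits, which is open, and set $F:=M\setminus U$. If $F=\emptyset$, then $M$ is a disjoint union of open orbits, so by connectedness there is a single orbit and we are done. Otherwise $F$ is closed, hence compact since $M=\partial\Omega$ is compact ($\Omega$ is bounded). Moreover $F$ is a union of orbits. Choose $p\in F$ maximizing $\rho(z)=|z|^2$ on $F$, and let $\mathcal{O}_p\subseteq F$ be its orbit, which by Step 2 is a complex manifold of dimension $\geq 1$. Take a holomorphic disc $\phi\colon\{|\zeta|<1\}\to\mathcal{O}_p$ with $\phi(0)=p$ and $\phi'(0)\neq 0$. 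Then $\rho\circ\phi$ is strictly subharmonic, since $\Delta(\rho\circ\phi)=4|\phi'|^2>0$ near $0$, yet it attains an interior maximum at $0$. This is a contradiction, so $F=\emptyset$.

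The main obstacle is Step 2. One must verify that the integrability needed to call $\mathcal{O}_p$ complex follows purely from $J$-invariance of its tangent spaces, which the graph/Cauchy--Riemann argument handles. One must also be careful that orbits are only immersed, not embedded. However, the argument only uses a small holomorphic disc through $p$ lying in the orbit (hence in $F$), so the immersion issue is harmless.
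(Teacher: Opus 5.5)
Your proposal is correct and follows essentially the same route as the paper's proof: Sussmann orbits are either open or have tangent spaces equal to $HM$, so the non-open ones are positive-dimensional complex manifolds; maximizing $|z|^2$ on the compact union of non-open orbits then contradicts the maximum principle, and connectedness yields a single orbit. Your extra details simply make explicit what the paper leaves implicit, namely the graph/Cauchy--Riemann argument in Step 2 and the holomorphic disc through $p$ that avoids the immersed-versus-embedded issue.
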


\begin{proof}
	Since $HM$ has co-rank one, by Theorem \ref{thm:sussmann} the Sussmann orbits are either open or of codimension one in $M$. In the latter case, their tangent spaces must coincide with the fibers of $HM$. Since $HM$ is $J$-invariant, codimension one orbits are positive-dimensional complex submanifolds of $\mathbb{C}^n$. Thus, $M=M_0\cup M_1$, where $M_0$ is the union of the open orbits and $M_1$ is a union of positive-dimensional complex manifolds. 
	
	The thesis is that $M_1$ is empty: if every orbit is open, by connectedness there must be only one. We argue by contradiction, assuming that $M_1$ is non-empty. Since $M_1$ is compact, the function $\phi(z)=\sum_{j=1}^n|z_j|^2$, restricted to $M_1$, achieves a maximum at a point $p$. If $L_p$ is the orbit through $p$, the restriction of $\phi(z)$ to $L_p$ is a strictly plurisubharmonic function on a complex manifold of positive dimension admitting a maximum. This is the desired contradiction. 
	\end{proof}

Notice that no complex-geometric assumption is made on $\Omega$ (e.g., it need not be pseudoconvex). Combining Lemma \ref{lem:berhanu_mendoza} and Corollary \ref{cor:global_solutions}, we obtain the following theorem. 

\begin{thm}\label{thm:global_solutions_cr}
Let $n\geq 2$. Let $\Omega\subseteq \mathbb{C}^n$ be a smooth bounded domain with connected boundary $M$. Let $\mathcal{X}=\{X_1,\ldots, X_k\}$ be a system of vector fields generating $HM$ and let $\mu$ be a normalized smooth positive measure on $M$. Then the conclusions of Theorem \ref{thm:global_solutions} apply to the equation $\sum_jX_j^\dagger X_ju=f$ (adjoints are with respect to $\mu$). 
	\end{thm}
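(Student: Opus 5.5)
This is a direct combination of results already established, so the plan has two short steps: verify the hypotheses of Corollary \ref{cor:global_solutions}, then invoke it.

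First, check the standing setup of Section \ref{sec:setup}. The boundary $M=\partial\Omega$ of a smooth bounded domain is a compact smooth real hypersurface without boundary, hence a closed manifold. It is connected by assumption. The collection $\mathcal{X}$ consists of smooth real vector fields defined globally on $M$, and $\mu$ is a normalized smooth positive measure by hypothesis. So $(M,\mathcal{X},\mu)$ is a triple of the kind considered in Section \ref{sec:global_solutions}.

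Second, Lemma \ref{lem:berhanu_mendoza} applies verbatim under the hypotheses $n\geq 2$, smooth bounded $\Omega$, connected boundary, and $\mathcal{X}$ generating $HM$. It gives that $G_\mathcal{X}$ acts transitively on $M$.

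Corollary \ref{cor:global_solutions} then yields the conclusions of Theorem \ref{thm:global_solutions} for $P=\sum_j X_j^\dagger X_j$, with adjoints taken with respect to $\mu$. Internally this passes through Lemma \ref{lem:omori_kobayashi}, which gives the global $L^2$ Poincaré inequality, and then through the Riesz representation argument.

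There is no real obstacle here. The only point needing care is that Lemma \ref{lem:berhanu_mendoza} is stated in a measure-free way. This is harmless: by the remark after Definition \ref{def:global_poincare}, the Poincaré inequality does not depend on $\mu$. The measure enters only through the definition of the adjoints and of $\mathrm{av}_\mu$, both of which are already accounted for in Theorem \ref{thm:global_solutions}.
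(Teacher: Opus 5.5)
Your proposal is correct and matches the paper's argument: the paper obtains the theorem directly by combining Lemma \ref{lem:berhanu_mendoza} (transitivity of $G_\mathcal{X}$) with Corollary \ref{cor:global_solutions}. Your added checks, that $M$ is a closed connected manifold and that the choice of $\mu$ plays no role in the transitivity hypothesis, are accurate.
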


\subsection{Exact regularity}\label{sec:exact_regularity} We continue working in the same setting as in Section \ref{sec:global_solutions}. Assume that $(M,\mathcal{X}, \mu)$ satisfies the global $L^2$ Poincaré inequality, so that Theorem \ref{thm:global_solutions} applies to $P=\sum_j X_j^\dagger X_j$. Denote by $P^{-1}$ the solution operator mapping a zero average datum $f\in L^2(M,\mu)$ to the unique zero average $u\in \mathcal{D}_\mathcal{X}$ such that $Pu=f$. By \eqref{eq:energy_estimate}, $P^{-1}$ is bounded on $L^2(M,\mu)$. 

We now define precisely the property of utmost interest for us. Denote by $H^k(M)$ the $L^2$ Sobolev space of order $k\in \mathbb{N}$ on $M$. Recall that these are intrinsically defined normable spaces. 
We fix once and for all norms $\lVert \cdot \rVert_{H^k(M)}$.

\begin{dfn}[Exact regularity]\label{def:exact_reg}
	We say that $P$ is exactly regular of order $k$ (on the $L^2$ Sobolev scale) if the following holds: for any zero average datum $f\in H^k(M)$, we have $P^{-1}f\in H^k(M)$. 
\end{dfn}

It is worth remarking that exact regularity is a \emph{global} regularity property. For example, exact regularity of all orders implies (via Sobolev embedding) smoothness of solutions when the datum is smooth, but it does not rule out propagation of singularities: smoothness of the datum on an open set does not necessarily imply smoothness of the solution on the same open set. 

We conclude by mentioning a couple of useful facts. \begin{enumerate}
\item If $P$ is exactly regular of order $k$, then $P^{-1}:H^k(M)\cap\{\mathrm{av}_\mu(f)=0\}\rightarrow H^k(M)$ is a well-defined \emph{closed} operator. By the closed-graph theorem, it is bounded. This means that there exists a constant $C$ such that   \[
\lVert u\rVert_{H^k(M)}\leq C\lVert f\rVert_{H^k(M)}
\] for all $f\in H^k(M)$ of zero average. 
\item If $P$ is exactly regular of order $k$, then a standard interpolation argument shows that $P$ is exactly regular of any order $j\leq k$ (this fact is not used in the sequel).
	\end{enumerate}

\section{The scaling argument}\label{sec:scaling}

\subsection{Characteristic submanifolds, grazing points, and flow collars}\label{sec:characteristic} Let $M$ be an ambient smooth manifold without boundary. We recall some standard terminology, mostly in order to fix notation. 

A subset $\Sigma\subseteq M$ is said to be a \emph{$d$-dimensional embedded submanifold} if for every $p\in \Sigma$, one of the following occurs: either there are local coordinates $(x,t)\in \R^d\times \R^{n-d}$ centered at $p$ (that is, $(x(p),t(p))=(0,0)$) such that $\Sigma=\{t=0\}$ near $p$; or there are local coordinates $(x',r,t)\in \R^{d-1}\times \R\times \R^{n-d}$ centered at $p$ such that $\Sigma=\{t=0, r\leq 0\}$ near $p$. In the latter case, we say that $p$ is a boundary point of $\Sigma$. We denote by $\partial\Sigma$ the set of all boundary points, and by $\overset{\circ}{\Sigma}:=\Sigma\setminus \partial \Sigma$ the interior of $\Sigma$. The boundary $\partial \Sigma$ is of course a $(d-1)$-dimensional embedded submanifold with empty boundary. E.g., the disc $\Sigma=\{x^2+y^2\leq 1, z=0\}$ is a (compact) $2$-dimensional embedded submanifold of $\R^3_{x,y,z}$, whose boundary is a circle. For brevity, we call \emph{adapted coordinates at $p\in \Sigma$} any system of coordinates $(x,t)$ or $(x',r,t)$ as above. 

Fix a system of vector fields $\mathcal{X}=\{X_1,\ldots, X_k\}$ on $M$. The following notion is crucial for the rest of the paper. 
\begin{dfn}[Characteristic submanifolds] An embedded submanifold $\Sigma\subseteq M$ is said to be characteristic (with respect to $\mathcal{X}$) if every vector field $X_j\in\mathcal{X}$ is tangent to $\Sigma$. \end{dfn}
Notice that this is a good definition, because $\Sigma$ has a well-defined tangent space at every point, including at the boundary. In fact, since the elements of $\mathcal{X}$ are continuous, tangency to $\Sigma$ is equivalent to tangency to its interior $\overset{\circ}{\Sigma}$. 

If $\Sigma$ is characteristic, then an interior point $p\in\overset{\circ}{\Sigma}$ cannot leave $\Sigma$ by short-time flows along vector fields in $\mathcal{X}$, but the same need not be true at boundary points, or for long-time flows. The next definition deals exactly with this matter. 

\begin{dfn}[Grazing points] Let $\Sigma$ be a characteristic embedded submanifold. A boundary point $p\in \partial \Sigma$ is said to be a grazing point (with respect to $\mathcal{X}$) if each $X_j\in \mathcal{X}$ is tangent to $\partial \Sigma$ at $p$. \end{dfn}

If $p\in \overset{\circ}{\Sigma}$ and $(x,t)$ are adapted coordinates at $p$, then the condition that $\Sigma$ is characteristic means that for each $j$ we have \[
X_j=c_j(x,t)\cdot \nabla_x+d_j(x,t)\cdot \nabla_t
\] with \[
d_j(x,0)\equiv 0. 
\]
Here the coefficients $c_j$ and $d_j$ are respectively $\R^d$-valued and $\R^{n-d}$-valued, and $\nabla_x$, $\nabla_t$ denote gradients in the indicated subsets of coordinates. 

If $p\in \partial \Sigma$ and $(x',r,t)$ are adapted coordinates at $p$, then the condition that $p$ is a grazing point consists in the requirement that for all $j$ we have 
\begin{equation}\label{eq:vector_field_boundary}
X_j=c_j(x',r,t)\cdot \nabla_{x'}+\widetilde{c_j}(x',r,t)\partial_r+d_j(x',r,t)\cdot \nabla_t
\end{equation} with \[
d_j(x',r,0)\equiv 0\qquad \textrm{if}\quad r\leq 0
\] and \[
\widetilde{c_j}(0,0,0)= 0.
\]

For later reference, we isolate a useful fact (stated here for compact submanifolds, but  a minor modification of the statement holds in general). 

\begin{prp}[Flow collar]\label{prp:flow_collar} Let $M$ be a smooth manifold without boundary, equipped with a system of vector fields $\mathcal{X}=\{X_1,\ldots, X_k\}$. 
	Let $\Sigma$ be a characteristic embedded submanifold of dimension $d$. Assume that $\Sigma$ is compact and has no boundary grazing point. Then there exist an embedded submanifold $\Sigma_0$ of dimension $d$ and a vector field $Y=\sum_j b_jX_j$, with $b_j\in C^\infty(M)$, such that the following properties hold:\begin{enumerate}
		\item $\Sigma_0$ has empty boundary and contains $\Sigma$.
		\item $Y$ is tangent to $\Sigma_0$. 
		\item $Y$ is transversal to $\partial \Sigma$ at each of its points.
		\end{enumerate}
	\end{prp}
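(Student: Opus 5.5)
Since no boundary point of $\Sigma$ is grazing, at each $p\in\partial\Sigma$ some $X_j$ is tangent to $\Sigma$ but not to $\partial\Sigma$. Because $\Sigma$ is characteristic, $X_j(p)\in T_p\Sigma$, and this space is split by $T_p\partial\Sigma$ into an inward and an outward half. So the non-tangency forces $X_j(p)$ to point strictly inward or strictly outward. The plan is to glue these local choices into a single outward-pointing combination $Y$, and then flow $\partial\Sigma$ a little outward along $Y$ to thicken $\Sigma$ into an open manifold $\Sigma_0$.

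\emph{Construction of $Y$.} Fix an outward normal direction along $\partial\Sigma$ inside $\Sigma$, for instance via a defining function $r$ of $\partial\Sigma$ in $\Sigma$ with $r<0$ in the interior and $dr\neq 0$ on $\partial\Sigma$. In adapted coordinates $(x',r,t)$ this is the coordinate $r$, and the transversal component of $X_j$ is $\widetilde{c_j}$ as in \eqref{eq:vector_field_boundary}. For each $p\in\partial\Sigma$ choose $j(p)$ and a sign $\epsilon_p=\pm1$ with $\epsilon_p\,\widetilde{c}_{j(p)}(p)>0$. By continuity this inequality persists on an open neighborhood $U_p\subseteq M$ of $p$. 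Compactness of $\partial\Sigma$ gives a finite subcover $U_1,\dots,U_m$. Take a partition of unity $\chi_1,\dots,\chi_m$ on $M$ subordinate to the cover $\{U_i\}\cup\{M\setminus\partial\Sigma\}$, and set $Y=\sum_i \epsilon_i\chi_i X_{j_i}$. Then $b_j=\sum_{i:\,j_i=j}\epsilon_i\chi_i\in C^\infty(M)$. On $\partial\Sigma$, $Y$ is tangent to $\Sigma$ and has strictly positive $dr$-component, since it is a convex combination of outward vectors. Hence $Y$ is transversal to $\partial\Sigma$ and outward pointing, which gives (3).

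\emph{Construction of $\Sigma_0$.} Let $\Phi_s=\exp(sY)$, which is defined locally for small $|s|$. I would set
\[
\Sigma_0:=\overset{\circ}{\Sigma}\cup\{\Phi_s(q)\colon q\in\partial\Sigma,\ 0\le s<\varepsilon\}.
\]
The map $\partial\Sigma\times(-\varepsilon,\varepsilon)\to M$, $(q,s)\mapsto\Phi_s(q)$, has injective differential at $s=0$, because $Y$ is transversal to $T\partial\Sigma$ inside $T\Sigma$. By compactness of $\partial\Sigma$ and the usual tubular/collar argument, it is an embedding for small $\varepsilon$. Its image for $s\le0$ coincides with a collar of $\partial\Sigma$ in $\Sigma$. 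This needs invariance: flowing backward along $Y$, which is tangent to $\Sigma$, from a boundary point stays in $\Sigma$. I would check this by noting that $Y$ is tangent to $\Sigma$ and inward-pointing under the backward flow, so integral curves through $\partial\Sigma$ go into $\overset{\circ}{\Sigma}$ for $s<0$. The pieces therefore glue to a $d$-dimensional submanifold without boundary containing $\Sigma$, which gives (1). Shrinking $\varepsilon$ again if necessary, $\Sigma_0$ is embedded, since the outer collar is disjoint from the compact set $\Sigma$ away from $\partial\Sigma$.

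\emph{Tangency of $Y$ to $\Sigma_0$, and the main obstacle.} $Y$ is tangent to $\overset{\circ}{\Sigma}$ because each $X_j$ is. On the outer collar, $Y$ is tangent because that collar is by construction a union of $Y$-integral curves, with the $\partial\Sigma$ directions transported by $\Phi_s$ and $Y$ itself spanning the $s$-direction. So $\Sigma_0$ is locally $Y$-invariant, which gives (2). The main technical point I expect is the smooth gluing at $\partial\Sigma$: the inner part $\overset{\circ}{\Sigma}$ and the flow-out must match to all orders. The remedy is to describe the whole of $\Sigma_0$ near $\partial\Sigma$ as the single flow-out $\Phi_s(\partial\Sigma)$, $s\in(-\varepsilon,\varepsilon)$. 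The part $s\le0$ equals $\Sigma$ near its boundary by the invariance argument above, so no gluing issue arises.
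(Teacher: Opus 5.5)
Your proposal is correct and follows the paper's proof essentially step by step. You build an outward-pointing $Y=\sum_j b_jX_j$ by gluing local choices $\pm X_j$ with a partition of unity, then use the flow collar $\Psi(q,s)=\exp(sY)q$ on $\partial\Sigma\times(-\varepsilon,\varepsilon)$ to set $\Sigma_0=\Sigma\cup\Psi(\partial\Sigma\times(-\varepsilon,\varepsilon))$, with tangency of $Y$ coming from the collar being a union of $Y$-integral curves. You are in fact slightly more explicit than the paper about why $\Psi(\partial\Sigma\times(-\varepsilon,0])\subseteq\Sigma$: rigorously, solve the ODE for $-Y$ inside $\Sigma$, where it points inward, and invoke uniqueness in $M$. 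You are also more explicit about why the outer collar misses $\Sigma$, which follows from compactness away from $\partial\Sigma$ together with injectivity of $\Psi$ near it.
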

\begin{proof} If $\Sigma$ has empty boundary, the statement (and the argument below) trivialize, so one may assume that $\partial\Sigma$ is non-empty. 

	We say that a vector field of the form $Y=\sum_j b_jX_j$, with $b_j\in C^\infty(M)$, is \emph{outward pointing} at $p\in \partial \Sigma$ if, in adapted coordinates $(x',r,t)$ at $p$, we have $Y=c(x',r,t)\cdot \nabla_{x'}+\widetilde{c}(x',r,t)\partial_r+d(x',r,t)\cdot \nabla_t$ with $\widetilde{c}(0,0,0)>0$. 
	
	Since no boundary point $p\in\partial \Sigma$ is grazing, for every $p\in \partial \Sigma$ there exists $X_j\in \mathcal{X}$ such that either $X_j$ or $-X_j$ is outward pointing at $p$. Using a suitable partition of unity, now one can sum a finite number of such vector fields and obtain $Y=\sum_j b_jX_j$ that is \emph{outward-pointing} at every boundary point. In particular, $Y$ is transversal to the boundary, and \emph{(3)} is proved. Since $\Sigma$ is characteristic, $Y$ is also tangent to $\Sigma$. 

Now we use this outward pointing $Y$ to slightly extend $\Sigma$ via a standard "flow collar" construction. Since $\partial \Sigma$ is compact, there exists $\delta>0$ such that the map \[
\partial \Sigma\times (-\delta, \delta) \ni (p,s) \longmapsto \Psi(p,s):=\exp(sY)p\in M
\] 
is an embedding; denote by $\Sigma'=\Psi(\partial \Sigma\times (-\delta, \delta))$ its image, a $d$-dimensional submanifold without boundary. Taking a possibly smaller value of $\delta$ we may guarantee that $\Psi(\partial\Sigma\times (-\delta,0])\subseteq \Sigma$ and that $\Psi(\partial\Sigma\times (0, \delta))$ does not meet $\Sigma$. Thus, \[\Sigma_0:=\Sigma\cup \Psi(\partial\Sigma\times (-\delta, \delta))\] is an embedded submanifold with empty boundary, and \emph{(1)} is established. By construction, $\Sigma_0\setminus \Sigma$ is a union of integral curves of $Y$, so \emph{(2)} holds too. 
\end{proof}
Notice that in general one cannot expect the manifold $\Sigma_0$ of Proposition \ref{prp:flow_collar} to be characteristic with respect to $\mathcal{X}$. 

\subsection{Vertical Sobolev spaces on normal bundles}\label{sec:tubular} In this and the following section, we fix: \begin{itemize}
	\item A smooth manifold $M$. 
	\item An embedded hypersurface without boundary $\Sigma_1\subseteq M$. 
\end{itemize}

The hypersurface $\Sigma_1$ has no a priori relations with the submanifolds $\Sigma$ and $\Sigma_0$ of the previous sections. Let \[
N\Sigma_1=\{(p,v)\colon\, p\in \Sigma_1, \quad v\in T_pM/T_p\Sigma_1\}
\] be the normal bundle of $\Sigma_1$. Since the latter has codimension one in $M$, $N\Sigma_1$ is a real line bundle. 

To formulate the scaling lemma, we need "vertical Sobolev spaces" on $N\Sigma_1$. We recall the construction. If $\pi(p,v)=p$ is the canonical projection on $N\Sigma_1$, a local trivialization of the normal bundle consists of an open set $B\subseteq \Sigma_1$ and a diffeomorphism $\psi: \pi^{-1}(B)\rightarrow B\times \R$ compatible with the vector bundle structure, that is, such that: $\pi_1\circ \psi=\pi$ (where $\pi_1: B\times \R\rightarrow B$ is the first projection) and $\psi$ induces linear isomorphisms $\pi^{-1}(\{p\})\rightarrow \pi_1^{-1}(\{p\})\equiv \R$ for each $p\in B$. If $B$ is the domain of a system of local coordinates $(x_1,\ldots, x_{n-1})$ on $\Sigma_1$, then adjoining $t=\pi_2\circ \psi$ (where $\pi_2: B\times \R\rightarrow \R$ is the second projection) we obtain a system of bundle coordinates $(x,t)=(x_1,\ldots, x_{n-1}, t)$ on $\pi^{-1}(B)\subseteq N\Sigma_1$. On the domain of these coordinates, we have the "vertical differentiation operator" $\frac{\partial}{\partial t}$. For $k\in \mathbb{N}$, we define \[H^k_\mathrm{vert}(N\Sigma_1)\]
as the space of functions $f:N\Sigma_1\rightarrow \mathbb{C}$ (modulo almost everywhere equivalence) with the property that, for every system of local trivializing coordinates $(x,t)$ as above and every compact $K\subseteq B$, we have \begin{equation}\label{eq:vertical_sobolev}
\int_{K\times \R} \left|\frac{\partial^j f}{\partial t^j}\right|^2\, dx\, dt<+\infty\qquad \forall j\leq k. 
\end{equation}
Since transition functions between different local trivializations are linear in the fiber variable, it is enough to verify \eqref{eq:vertical_sobolev} for a collection of trivializations whose base domains $B$ cover $\Sigma_1$. 

\subsection{Tubular neighborhoods and scalings}\label{sec:dilation_machinery} We continue with the setting of the previous section. By the tubular neighborhood theorem, there exists an open neighborhood $U$ of $\Sigma_1$ in $M$, an open neighborhood $V$ of the zero section in $N\Sigma_1$, and a diffeomorphism \begin{equation}\label{eq:tubular_neighborhood}
	\Phi:U\rightarrow V
\end{equation} with the property that $\Phi(p)=(p, 0)$ for all $p\in \Sigma_1$, i.e., $\Phi$ identifies the hypersurface $\Sigma_1$ with the zero section of the normal bundle. In what follows, we assume $U$, $V$ and $\Phi$ fixed. 

By linearity of the fibers of the normal bundle, we have the one-parameter group of dilations  \[
\mathrm{Dil}_\epsilon f(p,v)=f(p,\epsilon^{-1} v)\qquad (\epsilon>0)
\] acting on functions $f:N\Sigma_1\rightarrow \mathbb{C}$. Notice that dilations of parameter $\epsilon$ transform functions (vertically) supported "at unit scale" to functions supported "at scale $\epsilon$". Using dilations, we can perform the following basic operations: \begin{enumerate}
	\item Start with a compactly supported function $g$ on the normal bundle $N\Sigma_1$.
	\item Rescale $g$: for $\epsilon$ sufficiently small, the function $\mathrm{Dil}_\epsilon g$ is supported on $V$. 
	\item  "Transplant" $\mathrm{Dil}_\epsilon g$ to the manifold $M$, that is, consider the function $\mathrm{Dil}_\epsilon g\circ \Phi$, extended by zero outside $U$. 
\end{enumerate}
Our next proposition shows what this process does to measures in the small $\epsilon $ limit. Denote by $L^1_\mathrm{comp}(N)$ the space of compactly supported integrable functions on a manifold $N$. Notice that this is well-defined, independently of the choice of a smooth positive measure on $N$. A measure $\lambda$ on $N\Sigma_1$ is said to be \emph{dilation-invariant} if \begin{equation}\label{eq:dilation_invariance}
\epsilon^{-1} \int_{N\Sigma_1}  \mathrm{Dil}_\epsilon g\, d\lambda = \int_{N\Sigma_1} g\, d\lambda
\end{equation} for all $g\in L^1(N\Sigma_1, \lambda)$.

\begin{prp}[Scaling measures]\label{prp:scaling_measures} Let $\mu$ be a smooth positive measure on $M$. There exists a unique smooth positive measure $\lambda$ on $N\Sigma_1$ with the property that \begin{equation}\label{eq:lambda_def}
		\lim_{\epsilon\rightarrow 0}\epsilon^{-1}\int_M (\mathrm{Dil}_\epsilon g)\circ \Phi\, d\mu = \int_{N\Sigma_1} g\, d\lambda
	\end{equation} for all $g\in L^1_\mathrm{comp}(N\Sigma_1)$. The measure $\lambda$ is dilation-invariant. 

More generally, if $g_j$ converges weakly in $L^1_\mathrm{comp}(N\Sigma_1)$ to $g$  (that is, the functions $g_j$ and $g$ are supported on a common compact set and $g_j$ converges weakly in $L^1$ to $g$ on that compact set) and $\epsilon_j\rightarrow 0$, then \begin{equation}\label{eq:lambda_def_upgrade}
	\lim_{j\rightarrow +\infty}\epsilon_j^{-1}\int_M (\mathrm{Dil}_{\epsilon_j} g_j)\circ \Phi\, d\mu = \int_{N\Sigma_1} g\, d\lambda.
\end{equation}
	\end{prp}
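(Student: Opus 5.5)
Everything reduces to local coordinates, followed by a change of variables and a weak-convergence argument. First I define $\lambda$ in bundle coordinates. Take a trivializing chart $(x,t)$ on $\pi^{-1}(B)\subseteq N\Sigma_1$ over a coordinate patch $B$ of $\Sigma_1$. Pulling $\mu$ back through $\Phi^{-1}$, on $V\cap\pi^{-1}(B)$ we have $(\Phi^{-1})^*\mu = m(x,t)\,dx\,dt$ with $m$ smooth and positive. I set $\lambda := m(x,0)\,dx\,dt$ locally. To see that this defines a global measure, note that transition maps between bundle charts have the form $(x,t)\mapsto (\tilde x(x), a(x)t)$, which are linear in the fiber. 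The density $m(x,0)\,dx\,dt$ then transforms by the Jacobian $|\det D\tilde x(x)|\,|a(x)|$, and this is exactly the Jacobian of the transition map. Since the transformation rule of $m$ holds pointwise, evaluating it at $t=0$ gives consistency. The resulting $\lambda$ is visibly smooth and positive. It is also dilation-invariant: the substitution $t=\epsilon s$ gives $\int g(x,t/\epsilon)m(x,0)\,dx\,dt=\epsilon\int g\,d\lambda$.

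Next I prove \eqref{eq:lambda_def_upgrade}, which contains \eqref{eq:lambda_def} as the case of a constant sequence. Using a finite partition of unity $\{\chi_\alpha\}$ on $\Sigma_1$ pulled back via $\pi$ (note that $\pi^*\chi_\alpha$ commutes with $\mathrm{Dil}_\epsilon$), I may assume all $g_j$ are supported in a fixed compact set $K\subseteq \pi^{-1}(B)$ inside one chart, with $K\subseteq\{|t|\le R\}$. For $\epsilon$ small, $\mathrm{Dil}_\epsilon g_j$ is supported in $V$. Changing variables through $\Phi$ and then substituting $t=\epsilon s$,
\[
\epsilon^{-1}\int_M(\mathrm{Dil}_\epsilon g_j)\circ\Phi\,d\mu=\int g_j(x,s)\,m(x,\epsilon s)\,dx\,ds .
\]
I then split this as $\int g_j\,m(x,0)\,dx\,ds+\int g_j\,[m(x,\epsilon_j s)-m(x,0)]\,dx\,ds$. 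By weak $L^1$ convergence on $K$, tested against the bounded function $m(x,0)\mathbf 1_K$, the first term tends to $\int g\,d\lambda$. For the second term, I use that $m$ is uniformly continuous on a compact neighborhood of $\pi(K)\times\{0\}$, which gives $\sup_K|m(x,\epsilon_j s)-m(x,0)|\to0$.

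The main obstacle, though a mild one, is that the second term also requires $\sup_j\|g_j\|_{L^1(K)}<\infty$. This follows because a weakly convergent sequence in $L^1$ is bounded (uniform boundedness principle). Hence the second term tends to $0$.

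Finally, for uniqueness, suppose two smooth measures $\lambda,\lambda'$ both satisfy \eqref{eq:lambda_def}. Then $\int g\,d\lambda=\int g\,d\lambda'$ for all compactly supported $g\in L^1$, in particular for all $g\in C_c$, so $\lambda=\lambda'$. The same identity for $g$ and $\mathrm{Dil}_\epsilon g$ (compactly supported whenever $g$ is) gives an alternative proof of dilation invariance. Extending the dilation-invariance identity from $L^1_{\mathrm{comp}}$ to all of $L^1(\lambda)$ is done by monotone convergence on exhausting compacts.
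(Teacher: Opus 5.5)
Your proof is correct and follows essentially the same route as the paper: the local density $m(x,0)$ in bundle coordinates, the substitution $t=\epsilon s$, and a weak-times-strong convergence argument for \eqref{eq:lambda_def_upgrade}, which you prove inline via uniform boundedness where the paper cites it as a functional-analysis lemma. The only minor deviation is that you check consistency of the local definitions directly from the fiber-linear transition maps, whereas the paper deduces it from the chart-independence of the left-hand side of \eqref{eq:lambda_def}.
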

\begin{proof} Assume first that $g\in L^1_\mathrm{comp}(N\Sigma_1)$ is supported in the domain of a chart trivializing the normal bundle $N\Sigma_1$. Denote by $(x,t)\in \R^{n-1}\times \R$ the local coordinates on such a chart, and identify functions and measures with their representations in these coordinates. The push-forward of the measure $\mu$ via $\Phi$ has the form $\rho(x,t)dx\, dt$, where $\rho$ is smooth and positive. Hence, the left hand side of \eqref{eq:lambda_def} can be written as \begin{eqnarray*}
	\lim_{\epsilon\rightarrow 0}\epsilon^{-1}\int_{\R^{n-1}\times \R}g(x,\epsilon^{-1}t)\, \rho(x,t)dx\, dt&=&\lim_{\epsilon\rightarrow 0}\int_{\R^{n-1}\times \R}g(x,t)\, \rho(x,\epsilon t)dx\, dt\\
	&=&\int_{\R^{n-1}\times \R}g(x,t)\, \rho(x,0)dx\, dt, 
\end{eqnarray*} \newline 
where the last identity is justified by the dominated convergence theorem. Let $\widetilde\lambda$ be the locally defined measure with density $\rho(x,0)$ with respect to $dx\, dt$. By construction, identity \eqref{eq:lambda_def} holds for this measure and all compactly supported $g$ as above. Since the left hand side of \eqref{eq:lambda_def} is independent of the trivializing chart, a standard partition of unity argument allows to glue the locally defined measures $\widetilde\lambda$ (as the chart varies over a cover) and define a global smooth positive measure $\lambda$ satisfying the required identity for all $g\in L^1_\mathrm{comp}(N\Sigma_1)$. Uniqueness follows from the fact that a measure is determined by the associated integral. Being a limit of rescalings (or because the local densities are independent of $t$), the measure $\lambda$ is dilation invariant. This proves the first half of the proposition. 

Let's turn to the second half. By another partition of unity argument we may assume that the functions $g_j$ and $g$ are all supported in the domain of a single chart trivializing the normal bundle. Using the same notation as before, we have to investigate the limiting behavior of \[
\epsilon_j^{-1}\int_{N\Sigma_1}g_j(x,\epsilon_j^{-1}t)\, \rho(x,t)dx\, dt=\int_{N\Sigma_1}g_j(x,t)\, \rho(x,\epsilon_jt)dx\, dt.
\]
Recall the following functional analysis lemma (a corollary of the uniform boundedness principle): if $\lim y_j=y$ in the weak topology of a Banach space $B$ and $\lim z_j=z$ in norm in the dual Banach space $B^*$, then $\lim \left\langle y_j, z_j\right\rangle= \left\langle y,z\right\rangle$ (where the angular bracket denotes the standard pairing of $B$ and $B^*$). The desired conclusion follows by applying the lemma to $B=L^1(K, dx\, dt)$, with $K$ the common compact support of the functions $g_j$, $y_j=g_j(x,t)$, $z_j=\rho(x, \epsilon_j t)$, $y=g(x,t)$, and $z=\rho(x,0)$. 
\end{proof}

Let now $X$ be a smooth vector field on $M$ that is tangent to $\Sigma_1$. Pushing it forward via the tubular neighborhood diffeomorphism \eqref{eq:tubular_neighborhood}, we obtain a vector field $\Phi_*X$ on $V$. By taking the first-order jet of $\Phi_*X$ in the fiber direction, we obtain a vector field $\mathfrak{X}$ on the whole normal bundle $N\Sigma_1$. This is explained in detail in Proposition \ref{prp:scaling_vectors} below. The idea is that $\mathfrak{X}$ is obtained as a scaling limit of $X$, in much the same way as the measure $\lambda$ was obtained from $\mu$. 

\begin{prp}[Scaling vector fields]\label{prp:scaling_vectors} Let $X$ be a smooth vector field on $M$ tangent to $\Sigma_1$. There exists a unique smooth vector field $\mathfrak{X}$ on $N\Sigma_1$ with the property that \begin{equation}\label{eq:mod_def}
		\mathfrak{X} g := \lim_{\epsilon\rightarrow 0} \mathrm{Dil}_{\epsilon^{-1}}\circ \Phi_*X\circ\mathrm{Dil}_{\epsilon} g\qquad \forall g\in C^\infty_c(N\Sigma_1), 
	\end{equation}  where the convergence is in the topology of test functions. The vector field $\mathfrak{X}$ commutes with the dilation operators $\mathrm{Dil}_\epsilon$. 

More generally, if $g_j$ converges weakly in $L^2_\mathrm{loc}(N\Sigma_1)$ to $g$ and $\epsilon_j\rightarrow 0$, then \begin{equation}\label{eq:mod_def_upgrade}
	\mathfrak{X} g=\lim_{j\rightarrow \infty} \mathrm{Dil}_{\epsilon_j^{-1}}\circ \Phi_*X\circ\mathrm{Dil}_{\epsilon_j} g_j
\end{equation}
in the weak $H^{-1}_\mathrm{loc}(N\Sigma_1)$ topology (that is, in the dual topology to $H^1_\mathrm{comp}(N\Sigma_1)$). \end{prp}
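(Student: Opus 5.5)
We argue in local bundle coordinates, exactly as in the proof of Proposition \ref{prp:scaling_measures}, and use a partition of unity at the end to globalize. Fix a trivializing chart with coordinates $(x,t)\in\R^{n-1}\times\R$ on $N\Sigma_1$. In these coordinates write
\[
\Phi_*X=a(x,t)\cdot\nabla_x+b(x,t)\,\partial_t ,
\]
with $a,b$ smooth on $V$. Since $X$ is tangent to $\Sigma_1$ and $\Phi$ maps $\Sigma_1$ onto the zero section, we have $b(x,0)\equiv 0$. Hence $b(x,t)=t\,\beta(x,t)$ with $\beta$ smooth, by Hadamard's lemma.

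The first step is a direct computation. Since $\mathrm{Dil}_\epsilon g(x,t)=g(x,t/\epsilon)$, the chain rule gives
\[
\mathrm{Dil}_{\epsilon^{-1}}\circ\Phi_*X\circ\mathrm{Dil}_\epsilon g\,(x,t)=a(x,\epsilon t)\cdot\nabla_xg(x,t)+\beta(x,\epsilon t)\,t\,\partial_tg(x,t).
\]
For $g\in C^\infty_c$ and $\epsilon$ small, everything lives on a fixed compact set. The coefficients $a(x,\epsilon t)$ and $\beta(x,\epsilon t)$ converge in $C^\infty$ on compacts to $a(x,0)$ and $\beta(x,0)=\partial_tb(x,0)$. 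So the limit exists in the topology of test functions and equals $\mathfrak{X}g$, where
\[
\mathfrak{X}:=a(x,0)\cdot\nabla_x+\partial_tb(x,0)\,t\,\partial_t .
\]

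The second step is to check that this local formula defines a global vector field. The limit \eqref{eq:mod_def} is intrinsic, since it involves only $\Phi$, $X$ and the dilations, not the chart. So the locally defined fields agree on overlaps. Uniqueness is clear, because a vector field is determined by its action on test functions. Commutation with dilations follows from the formula: the coefficients of $\nabla_x$ are $t$-independent, and $t\partial_t$ is dilation invariant. Alternatively, it follows by replacing $\epsilon$ with $\epsilon\delta$ in the limit.

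The third step, the upgrade \eqref{eq:mod_def_upgrade}, is the only real point. Take $g_j\rightharpoonup g$ weakly in $L^2_{\mathrm{loc}}$ and a test function $h\in H^1_{\mathrm{comp}}$, supported in a chart after a partition of unity. Integrating by parts with respect to $dx\,dt$, the pairing $\langle \mathrm{Dil}_{\epsilon_j^{-1}}\Phi_*X\,\mathrm{Dil}_{\epsilon_j}g_j,h\rangle$ becomes
\[
-\int g_j\,\Big(\nabla_x\cdot\big(a(x,\epsilon_jt)h\big)+\partial_t\big(t\,\beta(x,\epsilon_jt)h\big)\Big)\,dx\,dt .
\]
The functions in parentheses converge in $L^2$ norm on $\operatorname{supp}h$ to the corresponding expression with $\epsilon_j=0$. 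This holds because the coefficients and their first derivatives converge uniformly on compacts, and $h,\nabla h\in L^2$.

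The same weak--strong pairing lemma used in Proposition \ref{prp:scaling_measures}, now in the Hilbert space $L^2(K)$, then gives convergence of the pairing to $-\int g\,\mathfrak{X}^{t}h=\langle\mathfrak{X}g,h\rangle$. Here $\mathfrak{X}^t$ denotes the transpose with respect to $dx\,dt$. This is weak $H^{-1}_{\mathrm{loc}}$ convergence.

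The main obstacle is a uniformity issue: the chart and the dilated support must stay inside $V$ uniformly in $j$. This holds because $\epsilon_j t\to0$ on the compact support of $h$. Beyond that, only the bookkeeping of the integration by parts requires care.
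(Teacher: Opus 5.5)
Your proposal is correct and follows the paper's proof essentially step by step. Both arguments use the local conjugation formula $a(x,\epsilon t)\cdot\nabla_x+\epsilon^{-1}b(x,\epsilon t)\,\partial_t$ with coefficients frozen at $t=0$, globalize via the chart-independence of the limit, and obtain the weak $H^{-1}_{\mathrm{loc}}$ upgrade from the weak--strong pairing lemma in $L^2$ of a chart applied to $\int g_j\,Y_j^t h\,dx\,dt$. Your Hadamard factorization $b=t\beta$ is just an explicit form of the paper's $C^\infty$ convergence $d(x,\epsilon t)/\epsilon\to t\,\partial_t d(x,0)$.
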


\begin{proof}
As in the proof of Proposition \ref{prp:scaling_measures}, we begin by working in local trivializing coordinates $(x,t)$ for the normal bundle $N\Sigma_1$. The tangency hypothesis on $X$ means that, in these coordinates, \begin{equation}\label{eq:tangent_local_coordinates}
	\Phi_*X = c(x,t)\cdot \nabla_x+d(x,t)\partial_t
\end{equation}
with \[
d(x,0)\equiv 0. 
\]
A simple computation shows that \begin{equation}\label{eq:conjugation_X}
	\mathrm{Dil}_{\epsilon^{-1}}\circ \Phi_*X\circ\mathrm{Dil}_{\epsilon} = c(x,\epsilon t)\cdot \nabla_x+\frac{d(x,\epsilon t)}{\epsilon}\partial_t, 
\end{equation}
as operators on $C^1$ functions. Since $\lim_{\epsilon\rightarrow 0}\frac{d(x,\epsilon t)}{\epsilon}=t\partial_td(x,0)$ in the $C^\infty$ topology, we define (locally)
 \begin{equation}\label{eq:mod_local_coordinates}
	\mathfrak{X} = c(x,0)\cdot \nabla_x+\partial_td(x,0) t\partial_t.
\end{equation}
Using a smooth partition of unity and the fact that vector fields are determined by their action on functions, one easily verifies that $\mathfrak{X}$ is a well-defined smooth vector field on $N\Sigma_1$, and that it is the unique vector field for which \eqref{eq:mod_def} holds. Being a limit of rescalings (or by formula \eqref{eq:mod_local_coordinates}), $\mathfrak{X}$ commutes with dilations. 

The proof of the upgraded convergence \eqref{eq:mod_def_upgrade} is similar to that of the corresponding limit \eqref{eq:lambda_def_upgrade} in Proposition \ref{prp:scaling_measures}. In fact, localizing to a chart and unraveling the definitions, the stated limit becomes \[
\lim_{j\rightarrow \infty} \int g_j\, Y_j^t\varphi\, dx\, dt = \int_{N\Sigma_1}g Y^t\varphi\, dx\, dt 
\]
where $\varphi\in H^1_\mathrm{comp}$, $Y_j=c(x,\epsilon_j t)\cdot \nabla_x+\frac{d(x,\epsilon_j t)}{\epsilon_j}\partial_t$, $Y=c(x,0)\cdot \nabla_x+\partial_td(x,0) t\partial_t$, and the superscript $t$ denotes formal adjunction with respect to Lebesgue measure $dx\, dt $. The validity of the last identity follows from the same functional analysis lemma used in the proof of Proposition \ref{prp:scaling_measures}, where now the Banach space $B$ is the $L^2$ space on a chart (say, with respect to $dx\, dt$). \end{proof}

\begin{rmk}\label{rmk:normal_operator} The proof of Proposition \ref{prp:scaling_vectors} shows that $\mathfrak{X}$ is formed from the expression of $\Phi_*X$ in local trivializing coordinates by freezing the $x$-coefficients at $t=0$ and taking the $1$-jet of the $t$-coefficient and freezing it at $t=0$. Notice that this construction is based on the choice of a tubular neighborhood diffeomorphism $\Phi$. The vector field $\mathfrak{X}$ itself is independent of such choice, if one makes the additional assumption that $d\Phi_{(p,0)}^{-1}(0,v)=v \mod T_p\Sigma_1$ (such normalization is always possible). We omit the straightforward check, as we will not explicitly need it. 
\end{rmk}

\subsection{Scaling lemma I: statement}\label{sec:scaling_statement}

Now that all preliminaries have been discussed, we can state the basic scaling lemma relating solutions of a sum-of-squares operator on a manifold to solutions of its "model" at a characteristic hypersurface. We start with the following data:
 \begin{itemize}
\item $M$, a smooth manifold.
\item $\mu$, a normalized smooth positive measure. 
\item $\mathcal{X}=\{X_1,\ldots, X_k\}$, a system of vector fields (as in Section \ref{sec:setup}, smooth and globally defined on $M$). 
\item $\Sigma$, a \emph{compact} embedded hypersurface, \emph{assumed to be characteristic and with no boundary grazing points} (with respect to $\mathcal{X}$). 
	\end{itemize}
We now take advantage of the theory of the last two sections. First of all, we may apply Proposition \ref{prp:flow_collar} to $\Sigma$, obtaining: \begin{itemize}
\item $\Sigma_0$, an embedded hypersurface with no boundary, extending $\Sigma$.
\item $Y$, a vector field tangent to $\Sigma_0$ and transversal to the boundary of $\Sigma$. 
	\end{itemize}
These are not unique, and a choice will be fixed throughout the rest of the section (if $\Sigma$ has no boundary, then $\Sigma_0=\Sigma$ and $Y=0$ are good choices). 

Denote by $\mathrm{Dil}_\epsilon$ the dilation operators acting on functions on $N\Sigma_0$, defined in Section \ref{sec:dilation_machinery}. Choose a tubular neighborhood diffeomorphism $\Phi:U\rightarrow V$, where $U$ is an open neighborhood of $\Sigma_0$, and $V$ is an open neighborhood of the zero section of $N\Sigma_0$. Notice that $N\Sigma$ may be viewed as a domain in $N\Sigma_0$, and the restriction of $\Phi$ to $\Phi^{-1}\left(V\cap N\overset{\circ}{\Sigma}\right)$ will serve as a tubular neighborhood diffeomorphism for $\overset{\circ}{\Sigma}$. This convention will always be assumed below. 

Next, Proposition \ref{prp:scaling_measures} applied to $\Sigma_0$ and $\mu$ gives: \begin{itemize}
\item $\lambda$, a	smooth positive measure on the normal bundle $N\Sigma_0$ uniquely defined by identity \eqref{eq:lambda_def}. 
	\end{itemize}
Since each $X_j$ is tangent to $\Sigma$ and $Y$ is tangent to the extended hypersurface $\Sigma_0$, Proposition \ref{prp:scaling_vectors} gives: \begin{itemize}
\item The vector fields $\mathfrak{X}_j=\lim_{\epsilon\rightarrow 0+} \mathrm{Dil}_{\epsilon^{-1}}\circ \Phi_*X_j\circ\mathrm{Dil}_{\epsilon}$ defined on $N\Sigma$.
\item The vector field $\mathfrak{Y}=\lim_{\epsilon\rightarrow 0+} \mathrm{Dil}_{\epsilon^{-1}}\circ \Phi_*Y\circ\mathrm{Dil}_{\epsilon}$ defined on $N\Sigma_0$. 
	\end{itemize}
Let \begin{equation}\label{eq:sum_of_squares}
P=\sum_j X_j^\dagger X_j
\end{equation}  be our basic sum-of-squares operator on the manifold $M$, where the $\dagger$ symbol denotes formal adjunction with respect to $\mu$. Finally, let \begin{equation}\label{eq:model_sum_of_squares}
\mathfrak{P}= \sum_j \mathfrak{X}_j^\dagger \mathfrak{X}_j
\end{equation} be the corresponding "model" sum-of-squares operator on the normal bundle of $\Sigma$, where this time the symbol $\dagger$ stands for formal adjunction with respect to $\lambda$.

\begin{lem}[Scaling Lemma]\label{lem:scaling} Let $M, \mathcal{X}, \mu, \Sigma$ be as above with $M$ compact and connected; in particular, $\Sigma$ is compact, characteristic and has no boundary grazing points (with respect to $\mathcal{X}$). Assume moreover that the following hold: \begin{enumerate}
\item The global $L^2$ Poincaré inequality is satisfied by $(M,\mathcal{X}, \mu)$ (cf.~Definition \ref{def:global_poincare}). 	
\item The operator $P$ (defined in \eqref{eq:sum_of_squares}) is hypoelliptic on $W\setminus \Sigma$, where $W$ is an open neighborhood of $\Sigma$. 
\item $P$ is exactly regular of order $k\in \mathbb{N}$ (cf.~Definition \ref{def:exact_reg}).
	\end{enumerate}
On $N\Sigma$, we have the measure $\lambda$ and the vector fields $\mathfrak{X}_j$ and $\mathfrak{Y}$ given by Propositions \ref{prp:scaling_measures} and \ref{prp:scaling_vectors}, as explained above. Let $f\in L^2_\mathrm{comp}(N\Sigma)$. Then the equation \begin{equation}\label{eq:model_equation}
\mathfrak{P} v=f
\end{equation} has a solution $v\in L^2(N\Sigma, \lambda)$ such that: \begin{itemize}
\item[(i)] $v$ has finite energy in the sense that $\mathfrak{X}_jv\in L^2(N\Sigma, \lambda)$ for all $j$. 
\item[(ii)] If $v$ is extended by zero to $N\Sigma_0$, then $\mathfrak{Y}v\in L^2(N\Sigma_0, \lambda)$.
\item[(iii)] if $f\in H^k_\mathrm{comp}(N\Sigma)$, then $v\in H^k_{\mathrm{vert}}(N\Sigma_0)$.
\end{itemize}
	\end{lem}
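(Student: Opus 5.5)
The argument is by scaling and compactness. First assume $f\in C^\infty_c(N\overset{\circ}{\Sigma})$; the general case $f\in L^2_{\mathrm{comp}}(N\Sigma)$ (resp.\ $H^k_{\mathrm{comp}}$) then follows by density, because the bounds below are uniform. For $\epsilon$ small, set $f_\epsilon:=(\mathrm{Dil}_\epsilon f)\circ\Phi$, extended by zero off $U$. Subtract its average, $\tilde f_\epsilon:=f_\epsilon-\mathrm{av}_\mu(f_\epsilon)$; by Proposition \ref{prp:scaling_measures}, $\mathrm{av}_\mu(f_\epsilon)=O(\epsilon)$. Let $u_\epsilon:=P^{-1}\tilde f_\epsilon$ and pull it back by scaling:
\[
v_\epsilon:=\mathrm{Dil}_{\epsilon^{-1}}\bigl(u_\epsilon\circ\Phi^{-1}\bigr)\cdot \epsilon^{-2}\cdot(\text{a normalization}),
\]
defined on $\mathrm{Dil}_{\epsilon^{-1}}V$, which exhausts $N\Sigma_0$. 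The normalization has to be read off from the estimates. The energy estimate \eqref{eq:energy_estimate} only gives $\|u_\epsilon\|_{L^2}\lesssim\|f_\epsilon\|_{L^2}\sim\epsilon^{1/2}$, and this does not scale correctly. I therefore use the energy identity $q(u_\epsilon,u_\epsilon)=\int \tilde f_\epsilon\bar u_\epsilon$. Then I bound $|\int f_\epsilon \bar u_\epsilon|$ through a \emph{scaled} Poincaré/Hardy-type bound near $\Sigma$: in the rescaled variables, $\int f\bar v$ should be controlled by the model energy $\sum\|\mathfrak X_j v\|^2$ plus $\|\mathfrak Y v\|^2$. The commutator relation \eqref{eq:conjugation_X} shows that $\epsilon^{-1}\int\sum|X_ju_\epsilon|^2\,d\mu$ tends to the model energy of $v_\epsilon$ (with normalization $v_\epsilon=\mathrm{Dil}_{\epsilon^{-1}}(u_\epsilon\circ\Phi^{-1})$, no extra power). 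The estimate I aim for is therefore
\[
\sum_j\|\mathfrak X_j^{\epsilon} v_\epsilon\|^2_{L^2(\lambda_\epsilon)}+\|\mathfrak Y^\epsilon v_\epsilon\|^2_{L^2(\lambda_\epsilon)}+\|v_\epsilon\|^2_{L^2(K,\lambda_\epsilon)}\le C(K,f)
\]
uniformly in $\epsilon$, where the superscripts denote the conjugated operators and $K$ is any compact set. The $\mathfrak Y$ term comes for free, since $Y=\sum b_jX_j$ and so $\|Yu\|\lesssim\sum\|X_ju\|$.

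The local $L^2$ control of $v_\epsilon$ is the main obstacle. I would get it from exact regularity (hypothesis (3)) together with hypoellipticity (hypothesis (2)), rather than from Poincaré. Exact regularity of order $k$ and the closed-graph bound give $\|u_\epsilon\|_{H^k}\le C\|\tilde f_\epsilon\|_{H^k}$. Vertically, $\|\partial_t^j f_\epsilon\|_{L^2}\sim\epsilon^{1/2-j}$, and the same scaling relation holds for $u_\epsilon$, so after rescaling the vertical derivatives $\partial_t^j v_\epsilon$, $j\le k$, are uniformly bounded in $L^2_{\mathrm{loc}}$. This yields (iii) in the limit. For (iii) it is essential that I use only vertical derivatives: the horizontal derivatives of $f_\epsilon$ are $O(1)$ relative to the scaling, so $H^k$ norms are dominated by the $\partial_t$ ones. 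For the plain $L^2$ bound, which is needed for all $f\in L^2_{\mathrm{comp}}$, I take the case $k=0$ together with the scaled Poincaré inequality of Lemma \ref{lem:omori_kobayashi}. Hypoellipticity off $\Sigma$ is used to exclude concentration: away from $\Sigma$, in $W\setminus\Sigma$, the datum vanishes, so $u_\epsilon$ is smooth there with bounds controlled by its energy.

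Next I pass to the limit. Extract a subsequence with $v_{\epsilon_j}\rightharpoonup v$ weakly in $L^2_{\mathrm{loc}}(N\Sigma_0)$, with $\mathfrak X_j^{\epsilon}v_\epsilon$ and $\mathfrak Y^\epsilon v_\epsilon$ converging weakly as well. Then use \eqref{eq:mod_def_upgrade} to identify these limits as $\mathfrak X_jv$ and $\mathfrak Yv$, and use lower semicontinuity of norms to get (i) and (ii), with global $L^2(\lambda)$ bounds by dilation invariance of $\lambda$. The equation is obtained by testing the weak formulation \eqref{eq:weak_equation} against $(\mathrm{Dil}_\epsilon\varphi)\circ\Phi$ with $\varphi\in C^\infty_c(N\overset{\circ}{\Sigma})$, multiplying by $\epsilon^{-1}$, and applying Propositions \ref{prp:scaling_measures} and \ref{prp:scaling_vectors}. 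The average correction contributes $O(\epsilon)\cdot\epsilon^{-1}\int(\mathrm{Dil}_\epsilon\varphi)\circ\Phi\to 0$ after normalization. In the limit this gives $\mathfrak Pv=f$ on $N\Sigma$.

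Finally, $v$ vanishes on $N(\Sigma_0\setminus\Sigma)$, which is what justifies extending it by zero. Points of $\Sigma_0\setminus\Sigma$ are not in the characteristic set, so some $X_j$ is transversal to $\Sigma_0$ there. In the scaled limit this forces $\partial_t v=0$ in the limit coefficient $d_j/\epsilon\to\infty$: more precisely, $\epsilon^{-1}\|d_j(x,\epsilon t)\partial_t v_\epsilon\|$ stays bounded with $d_j(x,0)\ne0$, so $\partial_tv_\epsilon\to0$ there. Combined with $L^2$ integrability along vertical fibers, this gives $v=0$ on $N(\Sigma_0\setminus\Sigma)$. This is the Dirichlet condition, and it is the step where the flow collar $Y$ enters.
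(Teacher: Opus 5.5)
\textbf{The gap is in the Dirichlet step.} You assert that at points of $\Sigma_0\setminus\Sigma$ some $X_j$ is transversal to $\Sigma_0$. This is not among the hypotheses and does not follow from them. $\Sigma_0$ is only the flow-out of $\partial\Sigma$ along $Y$, and in fact every $X_j$ is tangent to $\Sigma_0$ along $\partial\Sigma$. Moreover, neither hypoellipticity nor even Hörmander's condition at a point prevents a hypersurface through that point from being characteristic there; the plane $\{t=0\}$ at the origin of the Heisenberg group is an example.

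Your fiberwise mechanism is sound where transversality does hold, once stated correctly. You cannot bound $\epsilon^{-1}d_j(x,\epsilon t)\partial_t v_\epsilon$ by itself, since you have no control on $\nabla_x v_\epsilon$. Instead, multiply the bounded quantity $c_j(x,\epsilon t)\cdot\nabla_x v_\epsilon+\epsilon^{-1}d_j(x,\epsilon t)\partial_t v_\epsilon$ by $\epsilon$ and pass to the weak limit. This gives $d_j(x,0)\partial_t v=0$, and square integrability along the fibers then forces $v=0$ over $\{d_j(\cdot,0)\neq 0\}$. However, to conclude $v=0$ a.e.\ on $N(\Sigma_0\setminus\Sigma)$ you need the characteristic points of $\Sigma_0\setminus\Sigma$ to form a null set. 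You neither state nor prove this. Under Hörmander's condition on $W\setminus\Sigma$ (the worm case) it follows from a density-point argument, but the lemma assumes only hypoellipticity.

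Hypothesis (2) is exactly what the paper uses at this step, and you leave it idle; it is not needed to \emph{exclude concentration}. On $\Omega=U\setminus\Phi^{-1}(N\Sigma)$ one has $Pu_\epsilon=-\mathrm{av}_\mu(f_\epsilon)$, a constant. The closed graph theorem on $\{u\in\mathcal{D}_\mathcal{X}\colon Pu\in C^\infty(\Omega)\}$ then gives $\lVert u_\epsilon\rVert_{C(K)}\lesssim\lVert u_\epsilon\rVert_{\mathcal{D}_\mathcal{X}}+|\mathrm{av}_\mu(f_\epsilon)|=O(\epsilon^{1/2})$ for compact $K\subseteq\Omega$. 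Take $K$ to be a neighborhood of $p\in\Sigma_0\setminus\Sigma$. Then $|v_\epsilon|=O(\epsilon^{1/2})$ on the fibers over a neighborhood of $p$ up to height $\delta/\epsilon$, so $v=0$ there.

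\textbf{The energy estimate does scale correctly.} Your claim that it \emph{does not scale correctly} is wrong. With it go the unspecified scaled Poincaré/Hardy bound, the factor $\epsilon^{-2}$, and the appeal to exact regularity for $L^2$ control. By dilation invariance of $\lambda$, $\lVert \mathrm{Dil}_{\epsilon^{-1}}g\rVert_{L^2(\lambda)}^2=\epsilon^{-1}\lVert g\rVert_{L^2(\lambda)}^2$. Together with \eqref{eq:energy_estimate} and $\lVert f_\epsilon\rVert^2_{L^2(\mu)}=O(\epsilon)$, this gives directly $\lVert v_\epsilon\rVert_{L^2(\lambda)}=O(1)$. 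It also gives $O(1)$ bounds for the conjugated $X_j$ and $Y$ applied to $v_\epsilon$, with no further input.

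Three smaller points:
\begin{enumerate}
\item Put a cutoff $\chi\equiv 1$ near $\Sigma$ in front of $u_\epsilon$ before rescaling, as the paper does. Then $v_\epsilon$ and its conjugated derivatives are defined and bounded on all of $N\Sigma_0$, which both (ii) and your fiber argument require.
\item In (iii), $\lVert u_\epsilon\rVert^2_{H^k}=O(\epsilon^{1-2k})$ gives uniform bounds only for $\partial_t^k v_\epsilon$. For $\partial_t^j v_\epsilon$ with $j<k$ you need interpolation, or exact regularity of order $j$.
\item Since $X_j$ is not tangent to $\Sigma_0$, \eqref{eq:mod_def_upgrade} identifies the weak limit of the conjugated $X_j$ only on $N\overset{\circ}{\Sigma}$, not on $N\Sigma_0$; this suffices for (i).
\end{enumerate}
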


The rest of section \ref{sec:scaling} is occupied by the proof of the scaling lemma. 

\subsection{Scaling lemma II: construction of $v$}\label{sec:scaling2} For notation, refer to Section \ref{sec:scaling_statement}. Let $f\in L^2_\mathrm{comp}(N\Sigma)$ be the datum of the model equation \eqref{eq:model_equation}. The solution $v$ is found by the following process: \begin{enumerate}
	\item Rescale $f$, obtaining a function supported on an "$\epsilon$-neighborhood" of the zero section, and pull back the rescaled function to the manifold $M$ via the tubular neighborhood diffeomorphism.
    \item Solve the sum-of-squares equation \eqref{eq:sum_of_squares} on $M$ with datum the pulled-back function (corrected to have zero average).
    \item Cut off the solution near $\Sigma$, pull it back to the normal bundle, and undo the scaling. Call the resulting $\epsilon$-dependent function $v_\epsilon$. 
    \item Take a weak-$L^2$ limit of $v_\epsilon$ for $\epsilon$ tending to zero. 
	\end{enumerate}
Let us see the details. For $\epsilon$ small, $\mathrm{Dil}_\epsilon f$ is supported on the image $V$ of the tubular neighborhood diffeomorphism \eqref{eq:tubular_neighborhood}. Thus, we may define $f_\epsilon:=(\mathrm{Dil}_\epsilon f)\circ \Phi$, extended by zero outside $U$. By Proposition \ref{prp:scaling_measures},  \begin{equation}\label{eq:f_eps1}
	\lVert f_\epsilon\rVert_{L^2(M,\mu)}^2=\epsilon\lVert f\rVert_{L^2(N\Sigma,\lambda)}^2+o(\epsilon),
\end{equation} and \begin{equation}\label{eq:f_eps2}
\mathrm{av}_\mu(f_\epsilon)=\int_M f_\epsilon\, d\mu=\epsilon\int_{N\Sigma}f\, d\lambda+o(\epsilon).
\end{equation}
By assumption \emph{(1)} in Lemma \ref{lem:scaling}, we can apply Theorem \ref{thm:global_solutions} and obtain solutions $u_\epsilon$ of the equations
\begin{equation}\label{eq:equation_scaling}
	Pu_\epsilon= f_\epsilon-\mathrm{av}_\mu(f_\epsilon).
\end{equation}
The solutions may be taken to have zero average: $\mathrm{av}_\mu(u_\epsilon)=0$. By \eqref{eq:f_eps1} and the energy estimates \eqref{eq:energy_estimate} of Theorem \ref{thm:global_solutions}, we also have  \begin{equation}\label{eq:energy_estimates_1}
	\lVert u_\epsilon\rVert_{L^2(M, \mu)}^2=O(\epsilon\lVert f\rVert_{L^2(N\Sigma,\lambda)}^2)+o(\epsilon)
\end{equation} and \begin{equation}\label{eq:energy_estimates_2}
\sum_j \lVert X_ju_\epsilon\rVert_{L^2(M,\mu)}^2=O(\epsilon\lVert f\rVert_{L^2(N\Sigma,\lambda)}^2)+o(\epsilon)
\end{equation}
Fix a cut-off function $\chi\in C_c^\infty(U)$ identically equal to $1$ on a neighborhood of $\Sigma$, and set \[
v_\epsilon:=\mathrm{Dil}_{\epsilon^{-1}}\left((\chi u_\epsilon) \circ \Phi^{-1}\right),\]
where $(\chi u_\epsilon) \circ \Phi^{-1}$ is extended by zero outside $V$. Notice that $v_\epsilon:N\Sigma_0\rightarrow \mathbb{C}$ will not in general be supported on $N\Sigma\subset N\Sigma_0$, as control on the support was lost in solving the equation \eqref{eq:equation_scaling}. Moreover it is important, for later purposes, that the cut-off does not alter the solution in a neighborhood of $\Sigma$. By the dilation-invariance of $\lambda$ (identity \eqref{eq:dilation_invariance}) and the above estimates on $u_\epsilon$, \begin{eqnarray}
	\notag\lVert v_\epsilon\rVert_{L^2(N\Sigma_0,\lambda )}^2 &=& \epsilon^{-1}\lVert (\chi u_\epsilon) \circ \Phi^{-1} \rVert_{L^2(N\Sigma_0,\lambda)}^2\\
	\notag&=&O(\epsilon^{-1}\lVert u_\epsilon \rVert_{L^2(M,\mu)}^2)\\
	\label{eq:v_eps}&=&O(\lVert f\rVert_{L^2(N\Sigma_0,\lambda)}^2)+o(1).
\end{eqnarray}
In the second identity we used the fact that $\mu$ and the push-forward of $\lambda$ by $\Phi^{-1}$ have comparable densities on the support of $\chi$. The conclusion is that $v_\epsilon$ is uniformly bounded in $L^2(N\Sigma_0,\lambda)$. Thus, we can extract a weakly convergent subsequence: there exists \begin{equation}\label{eq:v}
	v\in L^2(N\Sigma_0,\lambda)
\end{equation} and $\epsilon_j\rightarrow 0$ such that $\lim v_{\epsilon_j}=v$ in the weak $L^2$ topology. By \eqref{eq:v_eps}, \begin{equation}\label{eq:v_bound}\lVert v\rVert_{L^2(N\Sigma_0, \lambda)}=O(\lVert f\rVert_{L^2(N\Sigma,\lambda)}).
\end{equation} 
This completes the construction of the function $v$ sketched at the beginning of this section. 

\subsection{Scaling lemma III: proof that $v$ is a solution} We now show that $v$ solves the model equation \eqref{eq:model_equation}. Let $u_\epsilon$ and $v_\epsilon$ be as in the previous section. Recall that equation \ref{eq:equation_scaling} is satisfied in the weak sense, that is, for each $\epsilon>0$ we have
\[
\sum_j\int_M X_j u_\epsilon\, \overline{X_jg}\, d\mu = \int_M(f_\epsilon-\mathrm{av}_\mu(f_\epsilon)) \overline{g}\, d\mu\qquad \forall g\in C^\infty(M).
\]
We are going to test the above identity on $g_\epsilon=(\mathrm{Dil}_\epsilon h)\circ \Phi$ (as usual, extended by zero outside $U$), for a fixed test function $h\in C^\infty_c(N\overset{\circ}{\Sigma})$, and look at the limit of both sides as $\epsilon$ tends to zero. By the definition of $f_\epsilon$ and \eqref{eq:f_eps2}, we get \[
\int_M(f_\epsilon-\mathrm{av}_\mu(f_\epsilon)) \overline{g_\epsilon}\, d\mu = \int_M \mathrm{Dil}_\epsilon(f\overline{h})\circ \Phi\, d\mu-\left(\epsilon\int_{N\Sigma}f\, d\lambda+o(\epsilon)\right)\int_M \mathrm{Dil}_\epsilon\overline{h}\circ \Phi\, d\mu. 
\]
By Proposition \ref{prp:scaling_measures}, we conclude that \begin{equation}\label{eq:equation_scaling_2}
	\lim_{\epsilon\rightarrow 0}\epsilon^{-1}\sum_j\int_M X_j u_\epsilon\, \overline{X_jg_\epsilon}\, d\mu =\int_{N\Sigma} f\, \overline{h}\, d\lambda.
\end{equation}
Notice how the correction needed to ensure zero average vanished in the limit. 

Recall that $\chi u_\epsilon =(\mathrm{Dil}_\epsilon v_\epsilon)\circ \Phi$. Fix $j=1,\ldots, k$ and compute  \begin{eqnarray*}
	X_j (\chi u_\epsilon) &=& X_j\left((\mathrm{Dil}_\epsilon v_\epsilon)\circ \Phi\right)\\
	&=&\left(\Phi_*X_j \circ \mathrm{Dil}_\epsilon v_\epsilon \right)\circ \Phi\\
	&=&\left(\mathrm{Dil}_\epsilon w_\epsilon \right)\circ \Phi, 
\end{eqnarray*} where all identities are on $U$ and \[
w_\epsilon:=\mathrm{Dil}_{\epsilon^{-1}}\circ \Phi_*X_j \circ \mathrm{Dil}_\epsilon v_\epsilon.
\]
Since $v_{\epsilon_j}$ converges weakly in $L^2$ to $v$, by the second half of Proposition \ref{prp:scaling_vectors} \begin{equation}\label{eq:lim_w_epsilon}
\lim w_{\epsilon_j}=\mathfrak{X}_j v
\end{equation} in the sense of distributions on $N\overset{\circ}{\Sigma}$. Observe that, since $X_j$ need not be tangent to $\Sigma_0$ (outside $\Sigma$), the limit may not exist on $N\Sigma_0$. The energy estimates \eqref{eq:energy_estimates_1} and \eqref{eq:energy_estimates_2} give \begin{eqnarray}
\notag\lVert X_j (\chi u_\epsilon)\rVert_{L^2(M, \mu)}^2&=&O\left(\lVert X_j u_\epsilon\rVert_{L^2(M, \mu)}^2+\lVert u_\epsilon\rVert_{L^2(M,\mu)}^2\right)\\
\label{eq:X_j_bound}&=& O\left(\epsilon \lVert f\rVert_{L^2(N\Sigma, \lambda)}^2\right)+o(\epsilon).
\end{eqnarray}
By dilation-invariance of $\lambda$ and the local comparability of the densities of $\lambda$ and $\Phi_*\mu$, we find
\begin{equation}\label{eq:w_epsilon_bound}
\epsilon \lVert  w_\epsilon\rVert_{L^2(N\Sigma_0, \lambda)}^2=\lVert \mathrm{Dil}_\epsilon w_\epsilon\rVert_{L^2(N\Sigma_0,\lambda)}^2=O\left(\lVert X_j (\chi u_\epsilon) \rVert_{L^2(M,\mu)}^2 \right).
\end{equation}
Combining \eqref{eq:X_j_bound} and \eqref{eq:w_epsilon_bound} we obtain that $w_\epsilon$ is uniformly bounded in $L^2(N\Sigma, \lambda)$, which in turn shows that \begin{equation}\label{eq:Xv}
	\mathfrak{X}_j v\in L^2(N\Sigma, \lambda)\end{equation} and that \eqref{eq:lim_w_epsilon} is also a weak $L^2$ limit. Now, applying the same argument with the outward pointing vector field $Y$ in place of $X_j$ shows that \begin{equation}\label{eq:Yv}\mathfrak{Y}v\in L^2(N\Sigma_0, \lambda),\end{equation} 
because, crucially, $Y$ is tangent to the whole $\Sigma_0$. 

We can treat the term $X_jg_\epsilon$ similarly to $X_j(\chi u_\epsilon)$. We have \[
X_j g_\epsilon = \left(\mathrm{Dil}_\epsilon z_\epsilon \right)\circ \Phi, 
\] where 
\[
z_\epsilon:=\mathrm{Dil}_{\epsilon^{-1}}\circ \Phi_*X_j \circ \mathrm{Dil}_\epsilon h, 
\]
which is a test function supported on $\mathrm{supp}(h)\subseteq N\overset{\circ}{\Sigma}$. By Proposition \ref{prp:scaling_vectors}, $z_\epsilon$ converges to $\mathfrak{X}_jh$ in $L^2(N\Sigma, \lambda)$ (in fact, in the topology of test functions) as $\epsilon$ tends to $0$. On the support of $g_\epsilon$ the cut-off $\chi$ is identically one ($\epsilon$ is small), so \[
X_j u_\epsilon\,\overline{X_j g_\epsilon}=\left(\mathrm{Dil}_\epsilon w_\epsilon \overline{z_\epsilon} \right)\circ \Phi.
\]
We have seen above that $w_{\epsilon_j}$ converges weakly in $L^2$ to $\mathfrak{X}_j v$, $z_{\epsilon_j}$ converges (strongly) in $L^2$ to $\mathfrak{X}_jh$, and $z_\epsilon$ vanishes outside $\mathrm{supp}(h)$; it follows that the product $w_{\epsilon_j}\overline{z_{\epsilon_j}}$ converges weakly in $L^1_\mathrm{comp}$ to $\mathfrak{X}_j v\, \overline{\mathfrak{X}_jh}$. By the second half of Proposition \ref{prp:scaling_measures}, \begin{eqnarray*}
&&\lim_{m\rightarrow \infty}\epsilon_m^{-1}\int_M X_j u_{\epsilon_m}\,\overline{X_j g_{\epsilon_m}}\, d\mu\\
&=&\lim_{m\rightarrow \infty}\epsilon_m^{-1}\int_M \left(\mathrm{Dil}_\epsilon w_{\epsilon_m} \overline{z_{\epsilon_m}} \right)\circ \Phi\, d\mu\\
&=& \int_{N\Sigma} \mathfrak{X}_j v\, \overline{\mathfrak{X}_jh}\, d\lambda. 
\end{eqnarray*}
Combining this with \eqref{eq:equation_scaling_2}, we conclude that \begin{equation}
\sum_j\int_{N\Sigma} \mathfrak{X}_j v\, \overline{\mathfrak{X}_jh}\, d\lambda=\int_{N\Sigma} f\, \overline{h}\, d\lambda\qquad \forall h\in C^\infty_c(N\Sigma), 
\end{equation}
that is, $\mathfrak{P} v=f$ on $N\Sigma$ in the sense of distributions. 

\subsection{Scaling lemma IV: Dirichlet boundary conditions} In the previous section, we established conclusion \emph{(i)} of the scaling lemma. In this section, we prove \emph{(ii)} by showing that $v$ vanishes on $N\Sigma_0\setminus N\Sigma$ (recall that $v$ is an element of $L^2(N\Sigma_0, \lambda)$). This is enough, in view of \eqref{eq:Yv}. To achieve this conclusion, we take advantage of the hypoellipticity assumption \emph{(2)} in Lemma \ref{lem:scaling}. More precisely, we need the following consequence of hypoellipticity. Let $W$ be the subset with the property that $P=\sum_j X_j^\dagger X_j$ is hypoelliptic on $W\setminus \Sigma$. 

\begin{prp}[Closed graph bound]\label{prp:closed_graph}
	Let $\Omega\subseteq W\setminus \Sigma$ be open. Let $K$ be a compact subset of $\Omega$ that is the closure of its interior. Then there exists a constant $C_K$, a compact set $K'\subseteq \Omega$, and an integer $\ell\in \mathbb{N}$ such that the estimate \begin{equation}\label{eq:hypoellipticity_bound}
		\lVert u\rVert_{C(K)}\leq C_K \left(\lVert u\rVert_{\mathcal{D}_\mathcal{X}}+\lVert Pu\rVert_{C^\ell(K')}\right)
	\end{equation}
holds for any $u\in \mathcal{D}_\mathcal{X}$ such that $Pu\in C^\infty(\Omega)$.  
\end{prp}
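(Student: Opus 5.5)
We will obtain \eqref{eq:hypoellipticity_bound} from the closed graph theorem applied to a suitable Fréchet space, using hypoellipticity of $P$ on $\Omega$ to show that the relevant inclusion map is well defined.

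Consider the space
\[
E:=\{u\in \mathcal{D}_\mathcal{X}\colon\, Pu|_{\Omega}\in C^\infty(\Omega)\}.
\]
We topologize it by the seminorms $\lVert u\rVert_{\mathcal{D}_\mathcal{X}}$ and $\lVert Pu\rVert_{C^m(K_m)}$, where $m\in\mathbb{N}$ and $K_m$ is an exhaustion of $\Omega$ by compact sets. This countable family of seminorms makes $E$ a metrizable locally convex space.

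Completeness of $E$ follows from two observations. If $u_n$ is Cauchy in $E$, then $u_n\to u$ in $\mathcal{D}_\mathcal{X}$ by Proposition \ref{prp:density}. Moreover $Pu_n|_\Omega\to g$ in $C^\infty(\Omega)$. Since $P$ is continuous on distributions, $Pu=g$ on $\Omega$, so $u\in E$ and $u_n\to u$ in $E$. Hence $E$ is Fréchet.

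By hypoellipticity of $P$ on $W\setminus\Sigma\supseteq\Omega$, every $u\in E$ is smooth on $\Omega$. Therefore the restriction map
\[
R:E\to C(K),\qquad u\mapsto u|_K,
\]
is well defined and linear. (Here we use that $K$ is the closure of its interior, so that $u|_K$ is determined by the $L^2$ class of $u$.)

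$R$ has closed graph. Suppose $u_n\to u$ in $E$ and $u_n|_K\to w$ uniformly. Then $u_n\to u$ in $L^2$, so $u_n|_{\mathrm{int}K}\to u$ in $L^2$, which gives $w=u$ a.e.\ on $\mathrm{int}K$. Both functions are continuous and $K=\overline{\mathrm{int}K}$, so $w=u|_K$.

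The closed graph theorem for Fréchet spaces then makes $R$ continuous. Continuity of a linear map from a space defined by an increasing family of seminorms into a Banach space means that $\lVert Ru\rVert_{C(K)}$ is bounded by a constant times finitely many of these seminorms. Since the seminorms increase with $m$, one of them suffices, giving
\[
\lVert u\rVert_{C(K)}\le C(\lVert u\rVert_{\mathcal{D}_\mathcal{X}}+\lVert Pu\rVert_{C^\ell(K_\ell)}).
\]
Setting $K'=K_\ell$ and $\ell$ as above yields \eqref{eq:hypoellipticity_bound}.

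The only genuinely delicate points are the Fréchet completeness of $E$ and the passage from a.e.\ equality to equality on $K$. The first needs $Pu$ to be identified as a distribution on $\Omega$. The second uses the hypothesis that $K$ is the closure of its interior. Neither is expected to cause real difficulty.
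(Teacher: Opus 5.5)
Your proposal is correct and follows the paper's proof. You use the same Fréchet space $\{u\in\mathcal{D}_\mathcal{X}\colon Pu\in C^\infty(\Omega)\}$, invoke hypoellipticity so that restriction lands in $C(K)$, close the graph using $K=\overline{\mathrm{int}\,K}$, and apply the closed graph theorem for Fréchet spaces, while also supplying the completeness and seminorm-reduction details the paper leaves implicit. One small quibble: $u|_K$ is well defined because $u$ has a unique continuous representative on the open set $\Omega$, not because $K=\overline{\mathrm{int}\,K}$; that hypothesis is needed only in the closed-graph step, where you use it correctly.
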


\begin{proof}	
	Let \[
	\mathcal{F}=\{f\in \mathcal{D}_\mathcal{X}\colon\, Pf\in C^\infty(\Omega)\}, 
	\]
	which is a Fréchet space with respect to the family of seminorms $\lVert f\rVert_{\mathcal{D}_\mathcal{X}}+\lVert Pf\rVert_{C^\ell(L)}$, as $L$ varies over the collection of compact subsets of $\Omega$ and $\ell\in \mathbb{N}$ (the $C^\ell$ norm here may be defined using finitely many local charts). Let $K$ be the set in the statement. Since $P$ is hypoelliptic on $\Omega$, the restriction operator \[
	\mathcal{F} \ni f\longmapsto f_{|K},
	\]
	takes values in the Banach space $C^0(K)$. Since $K$ is the closure of its interior, it is a closed operator. The conclusion now follows from the closed graph theorem for Fréchet spaces. \end{proof}

We now prove statement \emph{(ii)}. As all we care about is what happens to $v$ in a small outer collar of $N\Sigma$, we may assume that $U$, the domain of the tubular neighborhood diffeomorphism of $\Sigma_0$, is contained in $W$. This makes the argument below less cumbersome.

Since $f\in L^2_\mathrm{comp}(N\Sigma)$, $f_\epsilon=(\mathrm{Dil}_\epsilon f)\circ \Phi$ vanishes outside $\Phi^{-1}(N\Sigma)$. Hence, \[
Pu_\epsilon =f_\epsilon-\mathrm{av}_\mu(f_\epsilon)
\] is constantly equal to $-\mathrm{av}_\mu(f_\epsilon)$, and in particular smooth, on the nonempty open set \[\Omega:=U\setminus \Phi^{-1}(N\Sigma)\subseteq W.\] Let $K\subseteq \Omega$ be a compact set that is the closure of its interior. By Proposition \ref{prp:closed_graph}, there exist $C_K$, $\ell$, $K'\subseteq \Omega$ such that \[
\lVert u\rVert_{C(K)}\leq C_K \left(\lVert u\rVert_{\mathcal{D}_\mathcal{X}}+\lVert Pu\rVert_{C^\ell(K')}\right)
\]
for every $u\in \mathcal{D}_\mathcal{X}$ such that $Pu\in C^\infty(\Omega)$. Applying this bound to $u_\epsilon$ and recalling the bounds \eqref{eq:f_eps2}, \eqref{eq:energy_estimates_1} and \eqref{eq:energy_estimates_2}, we get\[
\lVert u_\epsilon\rVert_{C(K)}=O(\epsilon^\frac{1}{2}), 
\]
where the constant implicit in the notation depends on $K$, but not on $\epsilon$. Recalling that $\mathrm{Dil}_{\epsilon}v_\epsilon=(\chi u_\epsilon)\circ \Phi^{-1}$, we conclude that  \begin{equation}\label{eq:v_epsilon_small}
\max_{(p,v)\in \Phi(K)}|v_\epsilon (p,\epsilon^{-1}v)|=O(\epsilon^\frac{1}{2}).
\end{equation}
If $K$ is taken to be a small neighborhood of a point $p\in \Sigma_0\setminus \Sigma$ (as we may, by our assumption that $U\subseteq W$), then $\Phi(K)$ is a neighborhood of $(p,0)$ in $N\Sigma_0$. Taking the limit as $\epsilon$ tends to zero of \eqref{eq:v_epsilon_small} shows that $v=\lim v_{\epsilon_j}$ vanishes almost everywhere on a set of the form $\{(p',v)\in N\Sigma_0\colon\, p'\in B\}$, where $B$ is a small neighborhood of $p$ in $\Sigma_0\setminus \Sigma$. Countably many sets of this form cover $N\Sigma_0\setminus N\Sigma$. The proof of conclusion \emph{(ii)} of Lemma \ref{lem:scaling} is complete.

\subsection{Scaling lemma V: vertical differentiability} We are left with the proof of statement \emph{(iii)} in the scaling lemma.
As in the previous section, we use a bound that follows from the exact regularity assumption by an application of the closed-graph theorem. Namely, by hypothesis \emph{(3)} of Lemma \ref{lem:scaling} one may derive the existence of a constant $C$ such that  \begin{equation}\label{eq:closed_graph}
	\lVert u\rVert_{H^k(M)}\leq C\lVert f\rVert_{H^k(M)}\end{equation}
whenever $u\in \mathcal{D}_\mathcal{X}$ is the zero average solution of $Pu=f$ (this is the first remark at the end of Section \ref{sec:exact_regularity}). 

We now work under the assumption that $f\in H^k_\mathrm{comp}(N\Sigma)$. We start by estimating the Sobolev norms of $\mathrm{Dil}_\epsilon f$. Let $(x,t)=(x_1,\ldots, x_{n-1}, t)$ be local trivializing coordinates on $N\Sigma_0$. If $\alpha=(\alpha_1, \ldots, \alpha_n)$ is a multi-index of length $\leq k$, then \[
\frac{\partial^{\alpha_1}}{\partial x_1^{\alpha_1}}\cdots\frac{\partial^{\alpha_{n-1}}}{\partial x_{n-1}^{\alpha_{n-1}}} \frac{\partial^{\alpha_n}}{\partial t^{\alpha_n}}\mathrm{Dil}_\epsilon f =\epsilon^{-\alpha_n}\mathrm{Dil}_\epsilon \left(\frac{\partial^{\alpha_1}}{\partial x_1^{\alpha_1}}\cdots\frac{\partial^{\alpha_{n-1}}}{\partial x_{n-1}^{\alpha_{n-1}}} \frac{\partial^{\alpha_n}}{\partial t^{\alpha_n}} f\right).
\]
Thus, a change of variables gives \[
\int_{K\times \R} \left|\frac{\partial^{\alpha_1}}{\partial x_1^{\alpha_1}}\cdots\frac{\partial^{\alpha_{n-1}}}{\partial x_{n-1}^{\alpha_{n-1}}} \frac{\partial^{\alpha_n}}{\partial t^{\alpha_n}}\mathrm{Dil}_\epsilon f\right|^2\, dx\, dt=O_K(\epsilon^{1-2k})
\]
for any compact set $K$ contained in the domain of the coordinates $(x_1,\ldots, x_{n-1})$. The functions $f_\epsilon=(\mathrm{Dil}_\epsilon f)\circ \Phi$ have a common compact support contained in $U$, so their $H^k(M)$ norms can be computed using finitely many systems of local trivializing coordinates for the normal bundle (via the tubular neighborhood diffeomorphism $\Phi$). Thus,  
 \[\lVert f_\epsilon\rVert_{H^k(M)}^2=O\left(\epsilon^{1-2k}\right).\]
The function $u_\epsilon$ is the zero average solution of $Pu_\epsilon=f_\epsilon-\mathrm{av}_\mu(f_\epsilon)$. By \eqref{eq:f_eps2} and \eqref{eq:closed_graph},  \begin{equation}\label{eq:sobolev_norm}
\lVert u_\epsilon \rVert_{H^k(M)}^2=O\left(\lVert f_\epsilon \rVert_{H^k(M)}^2\right)+O(\epsilon^2) =O(\epsilon^{1-2k}).	\end{equation}
Recall that $v_\epsilon=\mathrm{Dil}_{\epsilon^{-1}}\left((\chi u_\epsilon)\circ \Phi^{-1}\right)$. If $(x,t)$ are local trivializing coordinates on $N\Sigma_0$, then \[
\frac{\partial^kv_\epsilon}{\partial t^k}=\epsilon^k\mathrm{Dil}_{\epsilon^{-1}}\frac{\partial^k}{\partial t^k}\left((\chi u_\epsilon)\circ \Phi^{-1}\right).
\] For a compact set $K$ contained in the domain of the coordinates $(x_1,\ldots, x_{n-1})$, \[
\int_{K\times \R} \left|\frac{\partial^kv_\epsilon}{\partial t^k}\right|^2\, dx\, dt=\epsilon^{2k-1}\int_{K\times \R} \left|\frac{\partial^k}{\partial t^k}\left((\chi u_\epsilon)\circ \Phi^{-1}\right)\right|^2\, dx\, dt.
\] 
Since the integral on the right hand side is $O_K(\lVert u_\epsilon \rVert_{H^k(M)}^2)$, \eqref{eq:sobolev_norm} gives \[
\int_{K\times \R} \left|\frac{\partial^kv_\epsilon}{\partial t^k}\right|^2\, dx\, dt=O_K(1)
\]
uniformly in $\epsilon$. Since $v_\epsilon$ converges weakly to the solution $v$ (modulo passage to a subsequence), this yields \[
\int_{K\times \R} \left|\frac{\partial^kv}{\partial t^k}\right|^2\, dx\, dt<+\infty. 
\]
A standard interpolation argument now gives the $L^2$ integrability of all vertical derivatives of order $\leq k$ (alternatively, one may use the second remark at the end of Section \ref{sec:exact_regularity}, which allows us to directly argue as above for any $j\leq k$). This completes the proof that $v\in H^k_\mathrm{vert}(N\Sigma_0)$.

\section{Failure of exact regularity of worm sublaplacians}\label{sec:worm}

\subsection{Diederich--Fornaess worm domains}\label{sec:DF}
We recall the definition of the classical Diederich--Fornaess worm domain, referring to \cite{DiederichFornaess1977} for more details. Fix: \begin{itemize}
\item A compact interval of positive length $I\subseteq \R$.
\item A \emph{non-zero} real number $\alpha$. 
\item A smooth function $\eta:\R\rightarrow [0,\infty)$ such that $I=\{x\in \R\colon\, \eta(x)=0\}$. 
	\end{itemize} 
Denote by $(z_1,z_2)$ the standard holomorphic coordinates on $\mathbb{C}^2$, and let $\Omega\subseteq \mathbb{C}^2$ be the open set defined by the conditions  \begin{equation}\label{eq:defining_function}
z_1\neq 0, \qquad \rho(z_1,z_2):=|z_2-e^{i\alpha \log |z_1|^2}|^2-1+\eta(\log |z_1|^2)<0.
\end{equation}
The interval $I$ and $\alpha$ should be viewed as the geometric parameters of the domain, while $\eta$ is an auxiliary function controlling how the ends of a preliminary unbounded domain (corresponding to $\eta\equiv 0$) are "capped" in order to make it smoothly bounded. It is shown in \cite{DiederichFornaess1977} that, if $\eta$ satisfies appropriate monotonicity and convexity assumptions, then:  \begin{itemize}
	\item $\Omega$ is a bounded pseudoconvex domain with smooth and connected boundary $M$.
	\item The real hypersurface $M=\partial \Omega$, which contains the compact annulus \begin{equation}\label{eq:annulus}
	\Sigma=\{(z_1,0)\colon\, \log |z_1|^2\in I\}, 
	\end{equation}
 is strongly pseudoconvex at every point of $M\setminus \Sigma$. 
\end{itemize} 
As the precise conditions on $\eta$ play no role in what follows, we do not comment further on them. We call any domain $\Omega$ with the above properties a \emph{D--F worm domain}. 

\subsection{Computing a generator of the CR bundle} Let $\Omega$ be a D--F worm domain with defining function $\rho$ as in \eqref{eq:defining_function}. As explained in Section \ref{sec:scv}, its boundary $M$ has a natural CR structure. Here it is convenient to work with the complex rank one bundle \[
T^{1,0}M:=T^{1,0}\mathbb{C}^2\cap \mathbb{C}TM, 
\]
equivalently, the complexification of the real distribution $HM$ defined in \eqref{eq:horizontal_bundle} is $T^{1,0}M\oplus \overline{T^{1,0}M}$. Crucially, the fiber of $HM$ at each point $p$ of the annulus $\Sigma$ coincides with $T_p\Sigma$; equivalently $T^{1,0}_pM=T_p^{1,0}\Sigma$. This is an immediate consequence of the complex-analyticity of the annulus. 

\begin{prp}\label{prp:generator}
In an open neighborhood $W$ of $\Sigma$ in $M$ one can find local coordinates $(z,t)\in \mathbb{C}\times \R$ such that:\begin{enumerate}
	\item $\Sigma=W\cap \{\log |z|^2\in I, \quad t=0\}$
\item The bundle $T^{1,0}M$ has a global generator $Z$ that admits the following coordinate representation in $W$: \[
Z=(1+O(d_\Sigma))\frac{\partial}{\partial z} +(i\alpha z^{-1}	t+O(d_\Sigma^2))\frac{\partial}{\partial t}.
\]
Here $d_\Sigma$ denotes the distance to $\Sigma$ (with respect to any Riemannian metric, the precise choice does not matter).
\end{enumerate}
	\end{prp}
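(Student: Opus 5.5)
The plan is to work directly with the explicit defining function $\rho$ of \eqref{eq:defining_function}. Near $\Sigma$ we have $z_1\neq 0$ and $z_2$ close to $0$, and we write $E(z_1):=e^{i\alpha\log|z_1|^2}$. We introduce the auxiliary quantity $w:=z_2\overline{E(z_1)}$, so that $\rho=|w|^2-2\,\mathrm{Re}\, w+\eta(\log|z_1|^2)$.

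\textbf{Choice of coordinates.} On $M$ the equation $\rho=0$ can be solved near $w=0$ by the implicit function theorem, since $\partial_{\mathrm{Re}\,w}\rho=-2+O(|w|)\neq 0$. This gives $\mathrm{Re}\, w=h(z_1,\mathrm{Im}\, w)$ with $h=O(|\mathrm{Im}\,w|^2+\eta)$. I will take the coordinates $z:=z_1$ and $t:=\mathrm{Im}\, w$ on a neighborhood $W$ of $\Sigma$ in $M$. With these coordinates, $\Sigma=\{w=0,\ \log|z|^2\in I\}$ becomes $W\cap\{t=0,\ \log|z|^2\in I\}$, because on $M$ the conditions $t=0$ and $\eta=0$ force $\mathrm{Re}\,w=0$. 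This proves (1).

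\textbf{The generator.} The field $L:=\rho_{z_2}\partial_{z_1}-\rho_{z_1}\partial_{z_2}$ is tangent to $M$, of type $(1,0)$, and nowhere zero on $M$ since $\partial\rho\neq 0$ there. Hence $L$ is a global generator of $T^{1,0}M$. On $W$ we have $\rho_{z_2}=\bar z_2-\bar E=-\bar E+O(d_\Sigma)$, which is nonvanishing and has no winding around the annulus, because its leading term depends only on $|z_1|$. So $\rho_{z_2}^{-1}$ has a logarithm on $W$, and this logarithm can be cut off to produce a global nonvanishing function $\phi$ with $\phi=\rho_{z_2}^{-1}$ near $\Sigma$. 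I will set $Z:=\phi L$, which equals $\partial_{z_1}-(\rho_{z_1}/\rho_{z_2})\partial_{z_2}$ near $\Sigma$.

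\textbf{Coordinate representation.} Next I will compute $Z$ in the coordinates $(z,\bar z,t)$, that is, compute $Zz$, $Z\bar z$ and $Zt$. Clearly $Zz=1$ and $Z\bar z=0$. For $Zt=\mathrm{Im}$-part of $Z(z_2\bar E)$, I will use $\partial_{z_1}E=i\alpha E/z_1$ and $\partial_{z_1}\bar E=-i\alpha\bar E/z_1$. These give
\[\rho_{z_1}=-\tfrac{i\alpha}{z_1}E(\bar z_2-\bar E)+\tfrac{i\alpha}{z_1}\bar E(z_2-E)+\tfrac{\eta'(\log|z_1|^2)}{z_1}.\]
Expanding in $w$ gives $Zt=i\alpha z^{-1}t+O(|w|^2)+O(\eta')$, where the $O(|w|^2)$ errors are $O(d_\Sigma^2)$. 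Here "$Zt$" should be read as the coefficient of $\partial_t$ once $Z$ is written as a combination of coordinate vector fields. The $O(d_\Sigma)$ in the $\partial_z$-coefficient absorbs any reshuffling caused by the parametrization $\mathrm{Re}\, w=h$.

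\textbf{Main obstacle: the $\eta$-terms.} The hardest point is controlling the contributions of $\eta$ near $\partial\Sigma$. A priori $\eta'=O(\mathrm{dist}(\log|z|^2,I))$ is only $O(d_\Sigma)$, not $O(d_\Sigma^2)$. I will first check whether the $\eta'$ term in $\rho_{z_1}/\rho_{z_2}$ actually contributes to $Zt$ or only to $Z(\mathrm{Re}\,w)$. Its coefficient is $\eta'/(z_1\rho_{z_2})$ multiplied by $\partial_{z_2}$, and after multiplying by $\bar E$ this is to leading order real up to the factor $1/z_1$. If that check fails, I will modify the coordinate, replacing $t$ by $t+g(z,\bar z)$ with $g=O(\eta)$, which vanishes on $\Sigma$ to second order, chosen so that the offending $O(\eta')$ term is cancelled by $Zg$. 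This is the step I expect to require the most care, possibly together with the precise Diederich--Fornaess convexity properties of $\eta$.
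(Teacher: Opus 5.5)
Your overall route is essentially the paper's: the same field $\rho_{z_2}\partial_{z_1}-\rho_{z_1}\partial_{z_2}$, and the same coordinate $t=\mathrm{Im}(z_2e^{-i\alpha\log|z_1|^2})$ with $z=z_1$. Normalizing by $\rho_{z_2}^{-1}$ (via a logarithm and a cut-off) instead of by the paper's globally defined factor $-e^{i\alpha\log|z|^2}$ is a harmless variation, and it even gives $Zz=1$ exactly.

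The gap is the step you flag yourself. You leave the $\eta'$-contribution unresolved, and your check (a) fails. With $w=z_2\bar E$ and your $Z$, a correct computation gives
\[
Zt=\frac{i\alpha t}{z(1-\bar w)}-\frac{\alpha\,\mathrm{Re}\,w}{z}-\frac{i\,\eta'(\log|z|^2)}{2z(1-\bar w)},
\]
so the $\eta'$-term does enter the $\partial_t$-coefficient. The reason is that for a complex vector field $Z(\mathrm{Im}\,w)=\frac{1}{2i}(Zw-Z\bar w)$, which is not $\mathrm{Im}(Zw)$. Your phrase "$Zt=\mathrm{Im}$-part of $Z(z_2\bar E)$" contains the same slip, although the leading term $i\alpha z^{-1}t$ you state is correct. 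Your "real up to the factor $1/z_1$" argument also cannot work, because $1/z_1$ is not real. The fallback (b) is unnecessary. In the form you describe, it cannot work exactly either: a real $g(z,\bar z)$ with $\partial_z g=\frac{i\eta'}{2z}$ does not exist where $\eta''\neq 0$, since $\partial_{\bar z}$ of the right-hand side equals $\frac{i\eta''}{2|z|^2}$, which is not real.

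The missing idea is that your a priori bound $\eta'=O(\mathrm{dist}(\log|z|^2,I))$ is far from sharp. The function $\eta$ is $C^\infty$ and vanishes identically on the interval $I$, which has positive length. Hence every derivative $\eta^{(k)}$ vanishes on $I$, including at its endpoints by continuity, and Taylor's formula gives $\eta'(x)=O(\mathrm{dist}(x,I)^N)$ for every $N$. Since $\log|z|^2$ is Lipschitz on $W$ and takes values in $I$ on $\Sigma$, you get $\mathrm{dist}(\log|z|^2,I)\lesssim d_\Sigma$, hence $\eta'(\log|z|^2)=O(d_\Sigma^N)$. This is exactly what the paper uses. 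Combined with $\mathrm{Re}\,w=\tfrac12(|w|^2+\eta)=O(d_\Sigma^2)$ on $M$ and $|w|=O(d_\Sigma)$, every error term in the $\partial_t$-coefficient is $O(d_\Sigma^2)$. No change of coordinates and no convexity property of $\eta$ is needed.
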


\begin{proof}
The type $(1,0)$ vector field \[
\widetilde{Z}=\frac{\partial\rho}{\partial z_2}\, \frac{\partial}{\partial z_1}-\frac{\partial\rho}{\partial z_1}\, \frac{\partial}{\partial z_2}
\]
is nowhere vanishing on $M$. Being tangent to $M$, it restricts to a global generator of $T^{1,0}M$. Simple computations give the formulas \begin{eqnarray}\label{eq:def_worm}
\label{eq:rho}	\rho&=&|z_2|^2-2\mathrm{Re}(z_2e^{-i\alpha \log |z_1|^2})+\eta(\log |z_1|^2) \\
\label{eq:partial_rho_1}	\frac{\partial \rho}{\partial z_1}&=& -2\alpha z_1^{-1}\mathrm{Im}(z_2e^{-i\alpha \log |z_1|^2})+z_1^{-1}\eta'(\log |z_1|^2)\\
\label{eq:partial_rho_2}	 \frac{\partial \rho}{\partial z_2} &=& \overline{z_2}-e^{-i\alpha \log|z_1|^2}.
\end{eqnarray} 
Hence \begin{equation}\label{eq:Z_tilde}
	\widetilde{Z}=(\overline{z_2}-e^{-i\alpha \log |z_1|^2})\frac{\partial}{\partial z_1}+\left(2\alpha z_1^{-1}\mathrm{Im}(z_2e^{-i\alpha\log|z_1|^2})-z_1^{-1}\eta'(\log|z_1|^2)\right)\frac{\partial}{\partial z_2}.
\end{equation}
On a neighborhood of $\Sigma$ in the ambient space $\mathbb{C}^2$, we may use the nonholomorphic coordinates \[(z=z_1,t=\mathrm{Im}(z_2e^{-i\alpha\log |z_1|^2}),\rho)\in \mathbb{C}\times \R^2.\] Using \eqref{eq:partial_rho_1}, \eqref{eq:partial_rho_2}, and \begin{eqnarray*}
	&&\frac{\partial z}{\partial z_1}=1,\quad \frac{\partial \overline{z}}{\partial z_1}=0,\quad \frac{\partial t}{\partial z_1}=-\alpha z_1^{-1}\mathrm{Re}(z_2e^{-i\alpha \log |z_1|^2})\\
	&&\frac{\partial z}{\partial z_2}=0,\quad \frac{\partial \overline{z}}{\partial z_2}=0,\quad \frac{\partial t}{\partial z_2}=-\frac{ie^{-i\alpha \log |z_1|^2}}{2},
\end{eqnarray*} 
we find \begin{eqnarray*}
	\frac{\partial}{\partial z_1} &=& \frac{\partial}{\partial z} -\alpha z_1^{-1}\mathrm{Re}(z_2e^{-i\alpha \log |z_1|^2})\frac{\partial}{\partial t}+(-2\alpha z_1^{-1}\mathrm{Im}(z_2e^{-i\alpha \log |z_1|^2})+z_1^{-1}\eta'(\log |z_1|^2))	\frac{\partial}{\partial \rho}\\
	&=& \frac{\partial}{\partial z} -\alpha z^{-1}\mathrm{Re}(z_2e^{-i\alpha \log |z|^2})\frac{\partial}{\partial t}+(-2\alpha z^{-1}t+z^{-1}\eta'(\log |z|^2))	\frac{\partial}{\partial \rho}\\
	\frac{\partial}{\partial z_2}&=& -\frac{ie^{-i\alpha \log|z|^2}}{2}	\frac{\partial}{\partial t}+(\overline{z_2}-e^{-i\alpha \log|z|^2})	\frac{\partial}{\partial \rho}.
\end{eqnarray*} 
In the formula for $\frac{\partial}{\partial z_1}$, we are thinking of $z_2$ as a function of the local coordinates $z, t, \rho$. Plugging these identities into \eqref{eq:Z_tilde}, we get 
\begin{eqnarray*}
	&&\widetilde{Z}=(\overline{z_2}-e^{-i\alpha \log |z|^2})\left(\frac{\partial}{\partial z} -\alpha z^{-1}\mathrm{Re}(z_2e^{-i\alpha \log |z|^2})\frac{\partial}{\partial t}\right)\\
	&&+\left(2\alpha z^{-1}t-z^{-1}\eta'(\log|z|^2)\right)\left(-\frac{ie^{-i\alpha \log|z|^2}}{2}	\frac{\partial}{\partial t}\right).
\end{eqnarray*}
Recall that $\widetilde{Z}\rho=0$, and accordingly the coefficient of $\frac{\partial}{\partial \rho}$ vanishes. 

Let $W$ be a neighborhood of $\Sigma$ \emph{in the boundary $M$}, where $(z,t)$ is a system of local coordinates. Then conclusion \emph{(1)} of the proposition holds. On $W$ we have  \begin{eqnarray*}
	&&z,z^{-1}=O(1), \\
	&&z_2=O(d_\Sigma), \quad \eta'(\log|z|^2)=O(d_\Sigma^N)\quad \forall N, \\
	&& \mathrm{Re}(z_2e^{-i\alpha \log |z|^2})=\frac{|z_2|^2+\eta(\log |z|^2)}{2}=O(d_\Sigma^2), 
\end{eqnarray*}
where we used \eqref{eq:rho} for the last one. Inserting this information into our formula for $\widetilde{Z}$ we find \[
\widetilde{Z}=(-e^{-i\alpha \log |z|^2}+O(d_\Sigma))\frac{\partial}{\partial z} +(-i\alpha z^{-1}e^{-i\alpha \log|z|^2}t+O(d_\Sigma^2))\frac{\partial}{\partial t}. 
\]
The desired generator of $T^{1,0}M$ is \[
Z:=-e^{i\alpha \log |z|^2}\widetilde{Z}=(1+O(d_\Sigma))\frac{\partial}{\partial z} +(i\alpha z^{-1}	t+O(d_\Sigma^2))\frac{\partial}{\partial t}. 
\]
\end{proof}

\subsection{System of vector fields and measure}\label{sec:worm_vector_measure}
Let $\Omega$ be a D--F worm domain, $M=\partial \Omega$, and $Z$ the global generator of $T^{1,0}M$ of Proposition \ref{prp:generator}. On $M$, we define the system of vector fields \[
\mathcal{X}=\{X_1,X_2\}:=\{\mathrm{Re}\,Z, -\mathrm{Im}\, Z\}, 
\]
so that $Z=X_1-iX_2$. Let $(z, t)$ be local coordinates on a neighborhood $W$ of $\Sigma$ as in Proposition \ref{prp:generator}. Write $z=x_1+ix_2$ with $(x_1,x_2)\in \R^2$. Then \begin{eqnarray}
\label{eq:X1}	X_1&=&\left(\frac{1}{2}+O(d_\Sigma)\right)\frac{\partial}{\partial x_1}+O(d_\Sigma)\frac{\partial}{\partial x_2}+\left(\alpha \frac{x_2}{x_1^2+x_2^2}t+O(d_\Sigma^2)\right)\frac{\partial}{\partial t}\\
\label{eq:X2}	X_2&=&\left(\frac{1}{2}+O(d_\Sigma)\right)\frac{\partial}{\partial x_2}+O(d_\Sigma)\frac{\partial}{\partial x_1}+\left(-\alpha \frac{x_1}{x_1^2+x_2^2}t+O(d_\Sigma^2)\right)\frac{\partial}{\partial t} 
\end{eqnarray}
We highlight a few key properties of the vector fields $X_1,X_2$: \begin{itemize}
\item[(V1)] $X_1$ and $X_2$ generate the distribution $HM$.
\item[(V2)] $X_1$ and $X_2$ are tangent to $\Sigma$.
\item[(V3)] $X_1$, $X_2$ and their commutator $[X_1,X_2]$ span the tangent bundle $TM$ at every point of $M\setminus \Sigma$.
	\end{itemize}
The first property is clear by construction; the second follows from the complex-analyticity of $\Sigma$, or the formulas above; the third is a consequence of the strong pseudoconvexity of $M$ outside the annulus (recalled in Section \ref{sec:DF}), or a direct computation. 

 In order to define a sublaplacian on $M$ as in Section \ref{sec:global_solutions}, we need to choose a smooth positive normalized measure $\mu$. We impose a constraint on $\mu$, that simplifies the analysis of the model operator below:  \begin{itemize}
\item[(M)] in the local coordinates $(x_1,x_2,t)$ defined above we have $d\mu= \rho(x_1,x_2,t)\, dx_1\, dx_2\, dt$, with \[\rho(x_1,x_2,0)\equiv 1\] for all $(x_1,x_2)$ such that $\log (x_1^2+x_2^2)\in I$.
\end{itemize}
Given any such measure, let \begin{equation}\label{eq:worm_sublaplacian}
P=X_1^\dagger X_1+X_2^\dagger X_2
\end{equation} be the associated sum-of-squares operator, which we call a \emph{worm sublaplacian}. By property (V1), we may apply Theorem \ref{thm:global_solutions_cr}, which gives existence and uniqueness of global finite energy solutions for the equation $Pu=f$ with square-integrable data, as explained in Sections \ref{sec:global_solutions} and \ref{sec:scv}. Moreover, by property (V3) and a celebrated hypoellipticity theorem of Hörmander, the operator $P$ is hypoelliptic on $M\setminus \Sigma$. 

\subsection{Model operator} We now compute the model operator $\mathfrak{P}$, specializing to the present setting the constructions of Section \ref{sec:scaling_statement}. We begin with some preliminary geometric observations. 

The annulus $\Sigma$ is a compact surface, bounded by two disjoint circles. By properties (V1) and (V2) of Section \ref{sec:worm_vector_measure}, $\Sigma$ is characteristic and has no grazing boundary points. As in Section \ref{sec:scaling_statement}, in this situation we may extend $\Sigma$ to a slightly larger surface $\Sigma_0$, which may be taken to be an open annulus, and we have a vector field $Y$ tangent to $\Sigma_0$ and transversal to the two boundary circles of $\Sigma$. We remark that the part of $\Sigma_0$ lying outside $\Sigma$ cannot be complex-analytic, because $M\setminus \Sigma$ is strongly pseudoconvex. A convenient choice of $Y$ is \begin{eqnarray}
\notag Y&=& 2x_1X_1+2x_2X_2\\
&=&	\left(x_1\frac{\partial}{\partial x_1}+x_2\frac{\partial}{\partial x_2}\right)+O(d_\Sigma)\frac{\partial}{\partial x_1}+O(d_\Sigma)\frac{\partial}{\partial x_2}+O(d_\Sigma^2)\frac{\partial}{\partial t}
	\end{eqnarray}

The normal bundle of $\Sigma_0$ in $M$ is globally trivial. We identify it with the product manifold $\Sigma_0\times \R$, and choose a tubular neighborhood diffeomorphism \[
\Phi: U\rightarrow V\subseteq \Sigma_0\times \R
\]
with $U\subseteq W$ and \[\Phi=(x_1, x_2,t)\qquad \text{on }U\cap \{\log(x_1^2+x_2^2)\in I\}.\] To achieve this, notice that $(x_1,x_2)$ restricts to global coordinates on $\Sigma_0$ (for this, one may need to shrink $\Sigma_0$, as we may without impacting the rest of the argument). If $T$ is a vector field transversal to $\Sigma_0$ and identical to $\frac{\partial}{\partial t}$ on $U\cap \{\log(x_1^2+x_2^2)\in I\}$, then for each $p$ in a sufficiently small neighborhood of $\Sigma_0$ there exist unique $(x_1,x_2,t)$ such that $p$ is reached starting at the point of $\Sigma_0$ of coordinates $(x_1,x_2)$ and flowing along $T$ for a time $t$ . 

We now compute the "model vector fields" $\mathfrak{X}_1, \mathfrak{X}_2$, and $\mathfrak{Y}$ given by Proposition \ref{prp:scaling_vectors}. Using the identities \eqref{eq:X1} and \eqref{eq:X2} of the previous section and recalling Remark \ref{rmk:normal_operator}, we obtain the two vector fields, defined on $N\Sigma\equiv \Sigma\times \R$,
 \begin{equation}\label{eq:X_model}
\mathfrak{X}_1=\frac{1}{2}\frac{\partial}{\partial x_1}+\alpha \frac{x_2}{x_1^2+x_2^2}t\frac{\partial}{\partial t}, \qquad \mathfrak{X}_2=\frac{1}{2}\frac{\partial}{\partial x_2}-\alpha \frac{x_1}{x_1^2+x_2^2}t\frac{\partial}{\partial t},\end{equation}
and the vector field, defined on $N\Sigma_0$, \begin{equation*}
\mathfrak{Y}=x_1\frac{\partial}{\partial x_1}+x_2\frac{\partial}{\partial x_2}+\mathfrak{Y}_0, 
\end{equation*}
where $\mathfrak{Y}_0$ vanishes on $N\Sigma$. Recall that we only let $\mathfrak{Y}$ act on functions $v$ that vanish off $N\Sigma$ (conclusion \emph{(ii)} of Lemma \ref{lem:scaling}), and on such functions $\mathfrak{Y}_0v=0$. Thus, for our purposes we may remove the term $\mathfrak{Y}_0$ and redefine \begin{equation}\label{eq:Y_model}
\mathfrak{Y}=x_1\frac{\partial}{\partial x_1}+x_2\frac{\partial}{\partial x_2}. 
\end{equation}

From the assumption (M) on the measure $\mu$ (cf.~Section \ref{sec:worm_vector_measure}), one immediately sees that the measure $\lambda$ on the normal bundle given by Proposition \ref{prp:scaling_measures} is Lebesgue measure in the $(x_1,x_2,t)$-coordinates. Thus,  \[
\mathfrak{X}_1^\dagger=-\frac{1}{2}\frac{\partial}{\partial x_1}-\alpha \frac{x_2}{x_1^2+x_2^2}\left(t\frac{\partial}{\partial t}+1\right), \qquad \mathfrak{X}_2^\dagger=-\frac{1}{2}\frac{\partial}{\partial x_2}+\alpha \frac{x_1}{x_1^2+x_2^2}\left(t\frac{\partial}{\partial t}+1\right).
\]
A computation gives the following formula for the model operator \eqref{eq:model_sum_of_squares}:
\begin{equation}\label{eq:model_equation_formula}
	\mathfrak{P}=-\frac{1}{4}\Delta_{x_1,x_2}+\frac{\alpha}{2}\left(- \frac{x_2}{x_1^2+x_2^2}\frac{\partial}{\partial x_1}+ \frac{x_1}{x_1^2+x_2^2}\frac{\partial}{\partial x_2}\right)\left(2t\frac{\partial}{\partial t}+1\right)
	-\frac{\alpha^2}{x_1^2+x_2^2}\left(t\frac{\partial}{\partial t}+1\right)t\frac{\partial}{\partial t}.
\end{equation}

\subsection{Model Dirichlet problem}\label{sec:gluing}

Motivated by the scaling lemma, we now study the Dirichlet problem for the operator $\mathfrak{P}=\mathfrak{X}_1^\dagger \mathfrak{X}_1+\mathfrak{X}_2^\dagger \mathfrak{X}_2$ on  \[
N\Sigma \equiv \{(x_1,x_2,t)\in \R^3\colon\, \log(x_1^2+x_2^2)\in I\}, 
\]
which we view as a domain in \begin{equation}\label{eq:E} E=\R^3\setminus \{x_1=x_2=0\}.
\end{equation}
More precisely, given $f\in L^2(N\Sigma, \lambda)$ ($\lambda:=$Lebesgue measure) we want $v\in L^2(N\Sigma, \lambda)$ such that:\begin{itemize}
	\item[(DP1)] $\mathfrak{X}_jv\in L^2(N\Sigma, \lambda)$ for $j=1,2$, 
	\item[(DP2)] $\mathfrak{P}v=f$ in the interior of $N\Sigma$,
	\item[(DP3)] $v=0$ on the boundary $\partial N\Sigma= \{(x_1,x_2,t)\in \R^3\colon\, \log(x_1^2+x_2^2)\in \partial I\}$. \end{itemize}
The latter condition is interpreted as the requirement that, extending $v$ by zero off $N\Sigma$, $\mathfrak{Y}v\in L^2(E, \lambda)$. Here $\mathfrak{Y}$ is the vector field \eqref{eq:Y_model} tranversal to the boundary circles of the annulus. We call any $v$ as above a solution of the \emph{model Dirichlet problem} with datum $f$.

By construction, $\mathfrak{X}_1$ and $\mathfrak{X}_2$ are tangent to $\Sigma\equiv \Sigma\times \{0\}$. The latter surface  cuts the domain into an upper half 
\[N_+\Sigma = \{(x_1,x_2,t)\in E\colon\, \log(x_1^2+x_2^2)\in I, \, t>0\}\] and a lower half 
\[N_-\Sigma = \{(x_1,x_2,t)\in E\colon\, \log(x_1^2+x_2^2)\in I, \, t<0\}.\] 
Given $f\in L^2(N_+\Sigma, \lambda)$, we say that $v$ is a solution of the \emph{upper model Dirichlet problem} with datum $f$ if conditions (DP1) and (DP2) hold with $N_+\Sigma$ in place of $N\Sigma$ and $v=0$ on the upper part of the boundary, namely \[\partial_+ N\Sigma= \{(x_1,x_2,t)\in \R^3\colon\, \log(x_1^2+x_2^2)\in \partial I, \quad t>0\}.\] Of course, this is interpreted as the square integrability of $\mathfrak{Y}v$ on $E\cap \{t>0\}$, where $v$ is extended by zero off $N_+\Sigma$. Similarly, given $f\in L^2(N_-\Sigma, \lambda)$ we have a notion of solution of the \emph{lower model Dirichlet problem} with datum $f$, obtained by changing appropriately "$+$" into "$-$" and "$t>0$" into "$t<0$" in the above lines. 

We have the following gluing lemma, expressing the fact that in the model Dirichlet problem no information propagates from one side to the other of $\Sigma$.

\begin{lem}[Gluing lemma]\label{lem:gluing}
Given $f\in L^2(N\Sigma, \lambda)$, define $f_+$ as the restriction of $f$ to $N_+\Sigma$ and similarly $f_-$ as the restriction to $N_-\Sigma$. If $v_+$ solves the upper model Dirichlet problem with datum $f_+$ and $v_-$ solves the lower model Dirichlet problem with datum $f_-$, then \[
v:=\begin{cases}
	v_+\qquad &\text{on }N_+\Sigma\\
	v_-\qquad &\text{on }N_-\Sigma
	\end{cases}
\] solves the model Dirichlet problem with datum $f$. 
	\end{lem}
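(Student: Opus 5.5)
The plan is to verify (DP1)--(DP3) for the glued function $v$ directly. The only real content is that the zero section $\{t=0\}$ carries no hidden distributional mass. Membership $v\in L^2(N\Sigma,\lambda)$ is immediate, since $\{t=0\}$ has Lebesgue measure zero.

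\textbf{Step 1: $\mathfrak{X}_j v$ has no singular part on $\{t=0\}$.} For (DP1), the distributional derivative $\mathfrak{X}_j v$ on the interior of $N\Sigma$ could a priori pick up a term concentrated on $\{t=0\}$. The structure of the model fields rules this out. By \eqref{eq:X_model}, $\mathfrak{X}_j=\frac12\partial_{x_j}+a_j(x)\,t\partial_t$ with $a_j$ smooth on $N\Sigma$. Take a test function $h\in C^\infty_c(N\overset{\circ}{\Sigma})$ and compute
\[
\langle \mathfrak{X}_j v, h\rangle=\int v\,\mathfrak{X}_j^\dagger h\, d\lambda
\]
by splitting the integral into $t>0$ and $t<0$. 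I will then insert a cutoff $\theta_\delta(t)=\theta(t/\delta)$ that vanishes near $t=0$ and equals $1$ for $|t|>2\delta$. On the support of $\theta_\delta h$ the identity $\int v_\pm \mathfrak{X}_j^\dagger(\theta_\delta h)=\int \mathfrak{X}_j v_\pm\,\theta_\delta h$ holds, because $v_\pm$ solve the half-problems. The error term is
\[
\int v\, a_j\, t\,\theta_\delta'(t)\, h\, d\lambda .
\]
Since $|t\,\theta'(t/\delta)/\delta|\lesssim 1$ and this factor is supported in $\{\delta<|t|<2\delta\}$, the error tends to $0$ by dominated convergence. The key point is that $\mathfrak{X}_j$ vanishes to first order in the normal direction, so $v\in L^2$ alone suffices and no trace condition is needed. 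This gives $\mathfrak{X}_j v=\mathfrak{X}_j v_+ + \mathfrak{X}_j v_-$ (each extended by zero), which lies in $L^2$.

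\textbf{Step 2: the weak equation across $\{t=0\}$.} For (DP2) I test against $h\in C^\infty_c$ and need
\[
\sum_j\int \mathfrak{X}_jv\,\overline{\mathfrak{X}_jh}=\int f\bar h .
\]
I apply the half-problem weak equations to $\theta_\delta h$. The error terms are of the form
\[
\int \mathfrak{X}_jv_\pm\,\overline{a_j\, t\,\theta_\delta'\, h},
\]
and these vanish as $\delta\to0$ for the same reason as in Step 1. The terms $\int \mathfrak{X}_jv_\pm\,\overline{\theta_\delta\mathfrak{X}_jh}$ converge by dominated convergence. I should also check that the half-problem formulation admits test functions supported in $N_\pm\Sigma$ only; $\theta_\delta h$ restricted to each side is such a test function.

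\textbf{Step 3: the boundary condition.} For (DP3), $\mathfrak{Y}=x_1\partial_{x_1}+x_2\partial_{x_2}$ has no $\partial_t$ component. Hence, for $v$ extended by zero to $E$, the distribution $\mathfrak{Y}v$ cannot charge $\{t=0\}$: pairing with a test function and applying Fubini in $t$ reduces everything to slices $t=\text{const}$. Alternatively, the same cutoff argument applies, with no error term at all, since $\mathfrak{Y}\theta_\delta=0$. Either way $\mathfrak{Y}v=\mathfrak{Y}v_+ + \mathfrak{Y}v_-\in L^2(E)$.

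The main obstacle is Step 1. The argument works precisely because the coefficient of $\partial_t$ in $\mathfrak{X}_j$ is $O(t)$, which is the whole reason no information crosses $\Sigma$. The other steps are routine.
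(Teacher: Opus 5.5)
Your proposal is correct and follows essentially the same route as the paper's proof: you cut off near $\{t=0\}$ with $\theta_\delta(t)=\theta(t/\delta)$, apply the half-problem identities to $\theta_\delta h$, and use the fact that $\mathfrak{X}_j\theta_\delta = a_j\,t\,\partial_t\theta_\delta$ is uniformly bounded (because $\mathfrak{X}_j$ is tangent to $\Sigma$) to pass to the limit by dominated convergence. You then repeat this for (DP2) and for (DP3), where the error term vanishes identically since $\mathfrak{Y}\theta_\delta=0$.
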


\begin{proof}
By assumption, $\mathfrak{X}_jv_+\in L^2(N_+\Sigma, \lambda)$ and $\mathfrak{X}_jv_-\in L^2(N_-\Sigma,\lambda)$ for $j=1,2$. With $v$ as in the statement, this means that there are $v_j\in L^2(N\Sigma, \lambda)$ such that \begin{equation}\label{eq:gluing_1}
\int_{N\Sigma} v\mathfrak{X}_j^\dagger g\, d\lambda = \int_{N\Sigma} v_j g\, d\lambda
\end{equation} for every test function $g$ supported in the interior of $N\Sigma$ and \emph{vanishing near $\Sigma$}. Let $g$ be an arbitrary test function on $N\overset{\circ}{\Sigma}$. Choose a nonnegative $\chi\in C^\infty(\R)$ such that $\chi\equiv 0$ near $0$ and $\chi(t)\equiv 1$ for $|t|\geq 1$. Put $\chi_\epsilon(x_1,x_2, t)=\chi(\epsilon^{-1}t)$. Applying \eqref{eq:gluing_1} to the test function $\chi_\epsilon g$, we obtain \[
\int_{N\Sigma} v\chi_\epsilon\mathfrak{X}_j^\dagger g\, d\lambda-\int_{N\Sigma} vg\mathfrak{X}_j\chi_\epsilon\, d\lambda = \int_{N\Sigma} v_j \chi_\epsilon g\, d\lambda.
\]
Since $\mathfrak{X}_j$ is tangent to $\Sigma$, the functions $\mathfrak{X}_j\chi_\epsilon$ stay uniformly bounded as $\epsilon$ tends to zero. Thus, an application of the dominated convergence theorem shows that the limit for $\epsilon$ tending to zero of the above identity is \[
\int_{N\Sigma} v\mathfrak{X}_j^\dagger g\, d\lambda= \int_{N\Sigma} v_j  g\, d\lambda.
\]
This proves condition (DP1) in the definition of solution of the model Dirichlet problem. The facts that $v_+$ solves the model equation in the upper domain with datum $f_+$ and $v_-$ solves the model equation in the lower domain with datum $f_-$ can now be restated as the identity \[
\sum_{j=1}^2 \int_{N\Sigma} v_j  \mathfrak{X}_j g\, d\lambda =  \int_{N\Sigma} f  g\, d\lambda
\] for every test function $g$ supported in the interior of $N\Sigma$ and vanishing near $\Sigma$. Now we may argue exactly as above to deduce the same identity for every test function $g$ supported in the interior of $N\Sigma$. This completes the proof of condition (DP2). Condition (DP3) is proved similarly. 
\end{proof}

By the invariance of the vector fields $\mathfrak{X}_1$ and $\mathfrak{X}_2$ under the symmetry $t\mapsto -t$, it is enough to understand the upper model Dirichlet problem. In fact, it is straightforward to verify that $v(x_1,x_2, t)$ solves the lower model Dirichlet problem with datum $f(x_1,x_2,t)$ if and only if $v(x_1,x_2, -t)$ solves the upper model Dirichlet problem with datum $f(x_1,x_2,-t)$. In view of this and Lemma \ref{lem:gluing}, in the following two sections we restrict our attention to the upper model Dirichlet problem. 

\subsection{Foliation and adapted coordinates} In this section, we find a new system of coordinates better adapted to the geometry of the problem. The key observation is that the two vector fields \eqref{eq:X_model} commute: \begin{equation}\label{eq:commutation}
[\mathfrak{X}_1, \mathfrak{X}_2]=0. 
\end{equation}
A conceptual explanation of this fact is postponed to section \ref{sec:geometry}. Identity \eqref{eq:commutation} holds on the domain of definition of the two vector fields, namely $E$ (cf.\eqref{eq:E}). By Frobenius' theorem, there is a foliation of $E$ whose leaves have tangent spaces spanned by $\mathfrak{X}_1, \mathfrak{X}_2$. Thus, $\mathfrak{P}$ is what is called a foliation Laplacian. The surface \[
E\cap \{t=0\}
\] is clearly a leaf (containing the annulus $\Sigma$). This leaf cuts $E$ into the two halves \[
E_+=E\cap \{t>0\}, \quad E_-=E\cap \{t<0\}, 
\]
on which the original foliation induces two separate foliations. By the already remarked invariance under the symmetry $t\mapsto -t$, it is enough to examine $E_+$.

We will introduce new coordinates on $E_+$, which exhibit the simple structure of the foliation. We will be guided by the formula \eqref{eq:model_equation_formula}. One may notice that the vector field \[
\mathfrak{T}=-\frac{x_2}{x_1^2+x_2^2}\frac{\partial}{\partial x_1}+ \frac{x_1}{x_1^2+x_2^2}\frac{\partial}{\partial x_2}
\]
differentiates in the angular direction and the transversal vector field $\mathfrak{Y}$ differentiates in the radial direction. This suggests using polar coordinates $(r,\theta)$ in the $(x_1,x_2)$-plane. A simple computation shows that \[\mathfrak{T}=r^{-2}\frac{\partial}{\partial \theta}, \quad \mathfrak{Y}=r\frac{\partial}{\partial r}.\] By the familiar formula 
\begin{eqnarray*}
	\Delta_{x_1,x_2}&=&\frac{\partial^2}{\partial r^2}+r^{-1}\frac{\partial}{\partial r}+r^{-2}\frac{\partial^2}{\partial \theta^2}\\
	&=&r^{-2}\left[\left(r\frac{\partial}{\partial r}\right)^2+\frac{\partial^2}{\partial \theta^2}\right], 
\end{eqnarray*} 
we get  \[
\mathfrak{P}=r^{-2}\left\{-\frac{1}{4}\left(r\frac{\partial}{\partial r}\right)^2-\frac{1}{4}\frac{\partial^2}{\partial \theta^2}+\frac{\alpha}{2}\frac{\partial}{\partial \theta}\left(2t\frac{\partial}{\partial t}+1\right)-\alpha^2\left(t\frac{\partial}{\partial t}+1\right)t\frac{\partial}{\partial t}\right\}.
\]
In view of the dilation invariant structure in $t$ and $r$ variables, we now write \[
r=e^\sigma, \quad t=e^\nu \qquad \text{with } (\sigma, \nu)\in \R^2,
\] so that $\mathfrak{Y}=r\frac{\partial}{\partial r}=\frac{\partial}{\partial \sigma}$ and $t\frac{\partial}{\partial t}=\frac{\partial}{\partial \nu}$. Notice that $(\sigma, \nu, \theta)$ are global coordinates giving the identification \begin{equation}\label{eq:N_+}
E_+ \simeq \R\times \R\times \mathbb{S}^1.
\end{equation}
In this identification, the leaves are the planes $\R\times \R\times \{\theta\}$ ($\theta\in \mathbb{S}^1$) and $\mathbb{S}^1$ is the leaf space. In the $(\sigma, \nu, \theta)$-coordinates \[
\mathfrak{P}=e^{-2\sigma}\left\{-\frac{1}{4}\frac{\partial^2}{\partial \sigma^2}-\frac{1}{4}\frac{\partial^2}{\partial \theta^2}+\frac{\alpha}{2}\frac{\partial}{\partial \theta}\left(2\frac{\partial}{\partial \nu}+1\right)-\alpha^2\left(\frac{\partial}{\partial \nu}+1\right)\frac{\partial}{\partial \nu}\right\}.\]
Completing a square we may rewrite this as 
	\[\mathfrak{P}=-\frac{e^{-2\sigma}}{4}\left\{\frac{\partial^2}{\partial \sigma^2}+\left(2\alpha\frac{\partial}{\partial \nu}-\frac{\partial}{\partial \theta}\right)^2+2\alpha\left(2\alpha\frac{\partial}{\partial \nu}-\frac{\partial}{\partial \theta}\right)\right\}.
\]
This operator involves only differentiation in the $\sigma$-direction, namely the first factor in the decomposition \eqref{eq:N_+}, and differentiation along the vector field \[\mathfrak{D}=2\alpha\frac{\partial}{\partial \nu}-\frac{\partial}{\partial \theta},\] which is tangent to the product of the two other factors, the cylinder $\R\times \mathbb{S}^1$. A further change of variables aligns the vector field $\mathfrak{D}$ with the axis of the cylinder. Let then \[
\tau=\frac{\nu}{2\alpha}\in \R,\qquad  \phi=\theta+\frac{\nu}{2\alpha}\in \mathbb{S}^1, 
\]
so that \[
\mathfrak{D}=\frac{\partial}{\partial \tau}. 
\]
We may also notice that \begin{eqnarray*}
	\mathrm{span}\{\mathfrak{X}_1, \mathfrak{X}_2\}&=& 	\mathrm{span}\{2x_1\mathfrak{X}_1+2x_2\mathfrak{X}_2,2x_2\mathfrak{X}_1-2x_1\mathfrak{X}_2\}\\
	&=&\mathrm{span}\left\{\mathfrak{Y}, \mathfrak{D}\right\}=\mathrm{span}\left\{\frac{\partial}{\partial \sigma},  \frac{\partial}{\partial \tau}\right\}.
	\end{eqnarray*}
We record the results of our computations in a proposition, where we also include the expression of Lebesgue measure $\lambda$ in the new coordinates, for later reference. The proof of the latter is a straightforward computation, which we omit.  

\begin{prp}[Formula for $\mathfrak{P}$]\label{prp:model_worm}
	On $E_+$ there are global coordinates \[
	\sigma=\log \sqrt{x_1^2+x_2^2}, \quad \tau=\frac{\log t}{2\alpha}, \quad \phi=\theta+\frac{\log t}{2\alpha}, 
	\]
	where $\theta\in \mathbb{S}^1$ is the angular coordinate in the $(x_1,x_2)$-plane. These coordinates give a diffeomorphism $E_+\simeq \R^2\times \mathbb{S}^1$ and the representations \[
	\mathfrak{P}=-\frac{e^{-2\sigma}}{4}\left(\frac{\partial^2}{\partial \sigma^2}+\frac{\partial^2}{\partial \tau^2}+2\alpha\frac{\partial}{\partial \tau}\right), \qquad \mathfrak{Y}=\frac{\partial}{\partial \sigma}.\]
	Moreover, \[
		\mathrm{span}\{\mathfrak{X}_1, \mathfrak{X}_2\}=\mathrm{span}\left\{\frac{\partial}{\partial \sigma},  \frac{\partial}{\partial \tau}\right\},
	\] and the measure $\lambda$ has density $2|\alpha|e^{2\sigma+2\alpha\tau}$ in the new coordinates. 
	\end{prp}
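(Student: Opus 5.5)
The proof is a sequence of changes of variables, and I would organize it by factoring the coordinate change on $E_+$ into three steps:
\[
(x_1,x_2,t)\;\mapsto\;(r,\theta,t)\;\mapsto\;(\sigma,\nu,\theta)\;\mapsto\;(\sigma,\tau,\phi).
\]

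\emph{The diffeomorphism.} I would check that each step is a diffeomorphism.
\begin{itemize}
\item Polar coordinates identify $\R^2\setminus\{0\}$ with $(0,\infty)\times\mathbb{S}^1$.
\item The maps $r=e^\sigma$ and $t=e^\nu$ identify $(0,\infty)$ with $\R$, which is where $t>0$ is used.
\item The map $(\nu,\theta)\mapsto(\tau,\phi)=(\nu/2\alpha,\ \theta+\nu/2\alpha)$ is an affine shear of $\R\times\mathbb{S}^1$. It is well defined modulo $2\pi$ in the angular variable, and it is invertible because $\alpha\neq 0$, with inverse $\nu=2\alpha\tau$, $\theta=\phi-\tau$.
\end{itemize}
This gives $E_+\simeq\R^2\times\mathbb{S}^1$.

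\emph{The formulas for $\mathfrak{Y}$ and $\mathfrak{P}$.} I would take the expression in $(\sigma,\nu,\theta)$ derived just before the statement, namely the completed-square form
\[
\mathfrak{P}=-\tfrac{e^{-2\sigma}}{4}\bigl\{\partial_\sigma^2+\mathfrak{D}^2+2\alpha\mathfrak{D}\bigr\},\qquad \mathfrak{D}=2\alpha\partial_\nu-\partial_\theta .
\]
First I would double-check this form by expanding $-\frac14\{(2\alpha\partial_\nu-\partial_\theta)^2+2\alpha(2\alpha\partial_\nu-\partial_\theta)\}$ and comparing with the previous line term by term. Then I would apply the chain rule for the shear:
\[
\partial_\nu=\tfrac1{2\alpha}(\partial_\tau+\partial_\phi),\qquad \partial_\theta=\partial_\phi,\qquad \partial_\sigma \text{ unchanged}.
\]
This gives $\mathfrak{D}=\partial_\tau$. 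Substituting yields the stated formula for $\mathfrak{P}$, and $\mathfrak{Y}=r\partial_r=\partial_\sigma$.

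\emph{The span identity.} I would compute directly from \eqref{eq:X_model}:
\[
2x_1\mathfrak{X}_1+2x_2\mathfrak{X}_2=r\partial_r=\mathfrak{Y},\qquad 2x_2\mathfrak{X}_1-2x_1\mathfrak{X}_2=-\partial_\theta+2\alpha t\partial_t=\mathfrak{D}.
\]
The coefficient matrix
\[
\begin{pmatrix}2x_1&2x_2\\ 2x_2&-2x_1\end{pmatrix}
\]
has determinant $-4r^2\neq0$ on $E$, so $\mathrm{span}\{\mathfrak{X}_1,\mathfrak{X}_2\}=\mathrm{span}\{\partial_\sigma,\partial_\tau\}$.

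\emph{The measure.} Since $\lambda$ is Lebesgue measure $dx_1\,dx_2\,dt$, I would compute the Jacobian factor by factor:
\[
dx_1\,dx_2=r\,dr\,d\theta=e^{2\sigma}\,d\sigma\,d\theta,\qquad dt=e^\nu\,d\nu=e^{2\alpha\tau}\,d\nu .
\]
The shear $(\tau,\phi)\mapsto(\nu,\theta)$ has Jacobian determinant of absolute value $|2\alpha|$. The product is the density $2|\alpha|e^{2\sigma+2\alpha\tau}$.

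There is no real obstacle here. The one step needing care is the sign and orientation bookkeeping in the shear, including the absolute value of the Jacobian when $\alpha<0$; every other step is routine calculus.
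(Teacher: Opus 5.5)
Your proposal is correct and follows the paper's route: polar coordinates, the substitutions $r=e^\sigma$, $t=e^\nu$, the completed square isolating $\mathfrak{D}=2\alpha\partial_\nu-\partial_\theta$, the shear that makes $\mathfrak{D}=\partial_\tau$, and the same combinations $2x_1\mathfrak{X}_1+2x_2\mathfrak{X}_2$ and $2x_2\mathfrak{X}_1-2x_1\mathfrak{X}_2$ for the span identity. Your Jacobian computation giving the density $2|\alpha|e^{2\sigma+2\alpha\tau}$ correctly supplies the measure step that the paper omits.
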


\subsection{Fourier analysis of the upper model Dirichlet problem}\label{sec:fourier}

We now investigate solutions of the upper model Dirichlet problem \begin{equation}\label{eq:model_equation2}
\mathfrak{P}v=f.
\end{equation}
Recall that $f,v\in L^2(N_+\Sigma, \lambda)$, $\mathfrak{X}_jv\in L^2(N_+\Sigma, \lambda)$ for $j=1,2$, and we impose the boundary condition \[
v=0 \quad \text{on}\quad  \partial N_+\Sigma, 
\]
interpreted extending $v$ by zero on $E_+\setminus N_+\Sigma$ and requiring that $\mathfrak{Y}v\in L^2(E_+, \lambda)$. 

Write $I=[2a, 2b]$, $a<b$. By Proposition \ref{prp:model_worm}, \eqref{eq:model_equation2} translates into the equation \begin{equation}\label{eq:model_equation_strip}
-\left(\frac{\partial^2}{\partial \sigma^2}+\frac{\partial^2}{\partial \tau^2}+2\alpha\frac{\partial}{\partial \tau}\right)v=4e^{2\sigma}f=:f_1
\end{equation}
on the domain \[[a,b]_\sigma\times \R_{\tau}\times \mathbb{S}^1_\phi.\] Notice that the operator, the density of the measure $\lambda$, and all the conditions on the datum and the solution, are independent of the angular variable $\phi$, so our problem effectively reduces to the Dirichlet problem for the operator \eqref{eq:model_equation_strip} on the strip \[
S=[a,b]_\sigma\times \R_{\tau}.\] Accordingly, we fix the angle $\phi$ and write $f_1(\sigma, \tau)=f_1(\sigma, \tau, \phi)$ and $v(\sigma, \tau)=v(\sigma, \tau, \phi)$. In $(\sigma, \tau)$-coordinates, the conditions on datum and solution read \begin{eqnarray*}
&\int_S |e^{\alpha\tau}f_1|^2\,  d\sigma\, d\tau, \quad \int_S |e^{\alpha\tau}v|^2\, d\sigma\, d\tau <+\infty\\
& \int_S \left|e^{\alpha\tau}\frac{\partial v}{\partial \tau}\right|^2\, d\sigma\, d\tau, \quad \int_{\R^2} \left|e^{\alpha\tau}\frac{\partial v}{\partial \sigma}\right|^2\, d\sigma\, d\tau<+\infty,
\end{eqnarray*}
where we dropped the innocuous factors $2|\alpha| e^{2\sigma}$ and $v$ is extended by zero off the strip. Write 
\[
V(\sigma, \tau)=e^{\alpha\tau}v(\sigma, \tau), \quad F(\sigma, \tau)=e^{\alpha\tau}f_1(\sigma, \tau).
\]
Since 
\[
\left(\frac{\partial^2}{\partial \sigma^2}+\frac{\partial^2}{\partial\tau^2}-\alpha^2\right)V=e^{\alpha\tau}\left(\frac{\partial^2}{\partial \sigma^2}+\frac{\partial^2}{\partial\tau^2}+2\alpha\frac{\partial}{\partial\tau}\right)v, 
\]
\eqref{eq:model_equation_strip} becomes 
\begin{equation}\label{eq:schrodinger}
\mathfrak{Q}V=F, \qquad \mathfrak{Q}:=-\frac{\partial^2}{\partial \sigma^2}-\frac{\partial^2}{\partial \tau^2}+\alpha^2
\end{equation} subject to the conditions \begin{equation}\label{eq:schroding_conditions}
F, V,\frac{\partial V}{\partial \tau}\in L^2(S), \qquad \frac{\partial V}{\partial \sigma}\in L^2(\R^2), 
\end{equation}
where the measure on the strip is Lebesgue measure. This problem can now be solved by a standard Fourier decomposition. Let
\[
\psi_n(\sigma) = \sqrt{\frac{2}{b-a}}\sin\left(n\pi \frac{\sigma-a}{b-a}\right)\qquad (n=1,2, \ldots, )
\]
be the standard orthonormal basis of Dirichlet eigenfunctions of $-\frac{\partial^2}{\partial \sigma^2}$ on $L^2([a,b])$, corresponding to eigenvalues \[E_n=\frac{\pi^2}{(b-a)^2}n^2.\] 
In particular, if $\psi_n$ is extended by zero on $\R\setminus [a,b]$ then \begin{equation}\label{eq:psi_n_dirichlet}\left\lVert \frac{\partial \psi_n}{\partial \sigma}\right\rVert_{L^2(\R)}^2=E_n.\end{equation} 
Fourier transforming in $\tau$ and expanding in the above basis in $\sigma$, we may represent \[
V(\sigma, \tau)=\frac{1}{2\pi}\sum_{n\geq 1}\psi_n(\sigma)  \int_\R \widehat{V}(n,\omega) e^{i \omega \tau}\, d\omega, 
\]
where \[
\widehat{V}(n,\omega)=\int_S V(\sigma, \tau) \psi_n(\sigma) e^{-i\omega\tau}\, d\sigma\, d\tau, 
\]
and similarly for $F$. In view of the Fourier identity
\[
\mathfrak{Q}V = \frac{1}{2\pi}\sum_{n\geq 1}\psi_n(\sigma)  \int_\R \left(E_n+\omega^2+\alpha^2\right)\widehat{V}(n,\omega) e^{i \omega \tau}\, d\omega, 
\]
equation \eqref{eq:schrodinger} is formally solved by setting \[
\widehat{V}(n,\omega):=\frac{\widehat{F}(n,\omega)}{E_n+\omega^2+\alpha^2}\qquad \forall n\geq 1, \, \omega\in \R. 
\]
That is, we have the formula \begin{equation}\label{eq:fourier_inversion}
V(\sigma, \tau)=\frac{1}{2\pi}\sum_{n\geq 1}\psi_n(\sigma)  \int_\R \frac{\widehat{F}(n,\omega)}{E_n+\omega^2+\alpha^2} e^{i \omega \tau}\, d\omega. 
\end{equation}
Now Plancherel's theorem and \eqref{eq:psi_n_dirichlet} allow us to rephrase the integrability and boundary conditions on datum and solution as follows:
 \[
\sum_{n\geq 1}\int_\R (E_n+\omega^2)|\widehat{V}(n,\omega)|^2\, d\omega<+\infty, \qquad \sum_{n\geq 1}\int_\R |\widehat{F}(n,\omega)|^2\, d\omega<+\infty. 
\]
From this, one easily checks that formula \eqref{eq:fourier_inversion} defines the unique solution of \eqref{eq:schrodinger} subject to the boundary conditions \eqref{eq:schroding_conditions}. The solution to the original problem \eqref{eq:model_equation2} is therefore unique, and can be obtained by applying formula \eqref{eq:fourier_inversion} to  \[
F_\phi(\sigma, \tau)=4e^{2\sigma + \alpha\tau}f(\sigma, \tau, \phi), 
\] denoting the result $V_\phi(\sigma, \tau)$, and setting \[
v(\sigma, \tau, \phi)=e^{-\alpha\tau}V_\phi(\sigma, \tau). 
\]
By Section \ref{sec:gluing}, the solution of the model Dirichlet problem on the whole normal bundle $N\Sigma$ is also unique, and is found by solving two upper model Dirichlet problems as we just did. 

\subsection{Asymptotic expansion near the annulus}

We now use the Fourier analysis of the last section to prove the following key proposition. 

\begin{prp}\label{prp:asymptotic} Let $f\in C^2_c(N_+\Sigma)$. Let $v$ be the solution of the model Dirichlet problem with datum $f$. Then:\begin{enumerate}
		\item $v\equiv 0$ on $N_-\Sigma$; 
		\item there are coefficients $c_n(f; \phi)$, continuous and $2\pi$-periodic in $\phi$, such that on $N_+\Sigma$ we have the asymptotic expansion \[
	v \sim \sum_{n\geq 1}c_n\left(f; \theta+\frac{\log t}{2\alpha}\right)\sin\left(n\pi \frac{\log r-a}{b-a}\right)  t^{\frac{1}{2}(\sqrt{1+E_n/\alpha^2}-1)}, 
	\]
	valid for small positive $t$ in the sense, after truncation at the $n$-th term, the error is $O(t^{\frac{1}{2}(\sqrt{1+E_{n+1}/\alpha^2}-1)})=o(t^{\frac{1}{2}(\sqrt{1+E_n/\alpha^2}-1)})$. 
	\end{enumerate} Moreover, if $f\geq 0$ is not identically zero, then $c_1(f; \phi)\geq 0$ is not identically zero in $\phi$. 
	\end{prp}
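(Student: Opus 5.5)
Since $f$ vanishes on $N_-\Sigma$, I will use the uniqueness of solutions of the model Dirichlet problem from Section \ref{sec:fourier} together with the gluing lemma (Lemma \ref{lem:gluing}). The lower problem with datum $f_-=0$ has the zero solution, and by uniqueness this is its only solution. Gluing it to the upper solution $v_+$ gives a solution of the full problem, and by uniqueness again it equals $v$. This proves (1).

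For (2) I will work in the coordinates of Proposition \ref{prp:model_worm}, for a fixed $\phi$. Set $F_\phi=4e^{2\sigma+\alpha\tau}f$. Since $f\in C^2_c(N_+\Sigma)$ has compact support in $N_+\Sigma$, the function $F_\phi$ is supported in a compact set $[a,b]\times[-T,T]$ of the strip, uniformly in $\phi$. Formula \eqref{eq:fourier_inversion} gives
\[
V_\phi=\sum_n \psi_n(\sigma)\,V_n(\tau),\qquad V_n=G_n*F_n,
\]
where $F_n(\tau)=\int F_\phi\psi_n\,d\sigma$ and $G_n(\tau)=e^{-\kappa_n|\tau|}/(2\kappa_n)$, with $\kappa_n=\sqrt{E_n+\alpha^2}$; $G_n$ is the inverse Fourier transform of $(\omega^2+\kappa_n^2)^{-1}$. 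The region $t\to0^+$ corresponds to $\nu=\log t\to-\infty$, that is, $\tau\to-\infty\cdot\mathrm{sgn}(\alpha)$. Take $\alpha>0$ for definiteness; the other sign is symmetric, with $e^{\kappa_n\tau}$ replaced by $e^{-\kappa_n\tau}$. For $\tau<-T$ the convolution is exact:
\[
V_n(\tau)=\frac{e^{\kappa_n\tau}}{2\kappa_n}\int e^{-\kappa_n s}F_n(s)\,ds=:\tilde c_n(\phi)\,e^{\kappa_n\tau}.
\]
Then $v=e^{-\alpha\tau}V$ becomes a sum of terms $\tilde c_n\psi_n e^{(\kappa_n-\alpha)\tau}$. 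Using $\tau=\log t/(2\alpha)$, we get $e^{(\kappa_n-\alpha)\tau}=t^{(\kappa_n-\alpha)/(2\alpha)}=t^{\frac12(\sqrt{1+E_n/\alpha^2}-1)}$. Substituting $\phi=\theta+\log t/(2\alpha)$ and $\sigma=\log r$, and absorbing $\sqrt{2/(b-a)}$ into $c_n$, gives exactly the displayed expansion. The coefficients $c_n$ are continuous and periodic in $\phi$ because $f$ is continuous, compactly supported, and the formula is linear in $f$.

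The main obstacle is controlling the tail uniformly, so that the error after truncation at $n$ is $O$ of the $(n+1)$-th power of $t$. For $\tau<-T$ the remainder is pointwise bounded by
\[
\sum_{m>n}|\tilde c_m|\,\|\psi_m\|_\infty\, e^{(\kappa_m-\alpha)\tau}\le e^{(\kappa_{n+1}-\alpha)\tau}\sum_{m>n}|\tilde c_m|\,e^{(\kappa_m-\kappa_{n+1})\tau},
\]
and it suffices that $\sum_m|\tilde c_m|<\infty$ uniformly in $\phi$. I plan to obtain this from $f\in C^2$. Integrating by parts twice in $\sigma$ against $\psi_m$ gives $|F_m(s)|\lesssim E_m^{-1}$, because $F$ vanishes near $\sigma=a,b$ (compact support), so no boundary terms appear. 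Also $e^{-\kappa_m s}\le e^{\kappa_m T}$ on the support, but for $\tau<-T-1$ this growth is absorbed by the factor $e^{(\kappa_m-\kappa_{n+1})(\tau+T)}$ after shifting. More precisely, I will write $V_m(\tau)=\frac1{2\kappa_m}\int e^{\kappa_m(\tau-s)}F_m(s)\,ds$, with $\tau-s\le\tau+T$. This bounds the tail by $e^{(\kappa_{n+1}-\alpha)\tau}$ times a series $\sum_m E_m^{-1}e^{-(\kappa_m-\kappa_{n+1})}$, which converges, for $\tau\le-T-1$. Since only small $t$ matters, this suffices.

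Finally, for positivity: $\psi_1>0$ on $(a,b)$ and $e^{-\kappa_1 s}>0$, so
\[
c_1(f;\phi)\propto\int\!\!\int e^{-\kappa_1 s}\,4e^{2\sigma+\alpha s}f(\sigma,s,\phi)\,\psi_1(\sigma)\,d\sigma\,ds\ \ge0.
\]
It vanishes for all $\phi$ only if $f\equiv0$, since the integrand is nonnegative with strictly positive weight.
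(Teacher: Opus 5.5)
Your argument is correct. For (1) and for the sign of $c_1$ it coincides with the paper's: uniqueness from the Fourier analysis plus the gluing lemma, and positivity of $\psi_1$ and of the exponential weight.

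For (2) you treat the $\tau$-dependence differently. The paper keeps each mode as the Fourier integral $a_n(\tau,\phi)=\int_\R \widehat{F_\phi}(n,\omega)(E_n+\omega^2+\alpha^2)^{-1}e^{i\omega\tau}\,d\omega$. It then:
\begin{itemize}
\item shifts the contour to $\R-i\,\mathrm{sgn}(\alpha)T$, with $\kappa_{n_0}<T<\kappa_{n_0+1}$ and $\kappa_n=\sqrt{E_n+\alpha^2}$;
\item collects the residues at $\omega=-i\,\mathrm{sgn}(\alpha)\kappa_n$ for $n\le n_0$;
\item bounds every shifted integral by $O(E_n^{-1}e^{CT+\mathrm{sgn}(\alpha)T\tau})$, using integration by parts in $\sigma$.
\end{itemize}
You instead invert $(\omega^2+\kappa_n^2)^{-1}$ in physical space, $V_n=G_n*F_n$, and note that beyond the $\tau$-support of $F_\phi$ each mode is \emph{exactly} $\tilde c_n e^{\mathrm{sgn}(\alpha)\kappa_n\tau}$. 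The paper's residues are your $\tilde c_n$. Indeed, for each fixed $n$, once $\tau$ lies beyond the support, the contour may be pushed all the way to $-i\,\mathrm{sgn}(\alpha)\infty$, and this reproduces your formula.

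Your route is more elementary and gives more. It yields an exact, absolutely convergent expansion $v=\sum_n c_n\psi_n(\log r)\,t^{\frac12(\sqrt{1+E_n/\alpha^2}-1)}$ for all $t$ below the support of $f$. It also gives the remainder $O(t^{\frac12(\sqrt{1+E_{n+1}/\alpha^2}-1)})$ directly. By contrast, the paper's choice of $T$ strictly between consecutive poles literally yields only $O(t^{(T-|\alpha|)/(2|\alpha|)})$, and one extra term is needed to reach the stated bound.

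The contour argument has its own advantage. It uses only growth bounds for $\widehat{F_\phi}(n,\cdot)$ on horizontal lines and the location of the poles. So it would apply verbatim to data with mere exponential decay in $\tau$ rather than compact support, and it displays the exponents as poles of the symbol.

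One slip in your write-up: the first sufficient condition you state, $\sum_m|\tilde c_m|<\infty$ uniformly in $\phi$, can fail. The reason is that $\tilde c_m$ contains $\int e^{-\kappa_m s}F_m(s)\,ds$, which (for $\alpha>0$) grows like $e^{\kappa_m s_0}$ when the support of $f$ reaches $\tau=-s_0<0$. What actually closes the tail estimate is your refined bound using $\tau-s\le\tau+T\le 0$. That bound lets the factor $e^{(\kappa_m-\kappa_{n+1})(\tau+T)}\le 1$ absorb this growth.
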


\begin{proof} 
	By Lemma \ref{lem:gluing}, it is enough to solve the upper and lower model Dirichlet problems separately. Since the datum vanishes on $N_-\Sigma$, the solution is zero there and it is enough to solve the upper model Dirichlet problem. We work in the coordinates of Proposition \ref{prp:model_worm}, proceeding as in Section \ref{sec:fourier}. We have the family of functions \[
	F_\phi(\sigma, \tau)=4e^{2\sigma + \alpha\tau}f(\sigma, \tau, \phi), 
	\]
	parametrized by the angle $\phi$. Each $F_\phi$ is a $C^2$ function compactly supported on the strip $S=[a,b]\times \R$. In particular, the Fourier coefficient \[
	\widehat{F_\phi}(n,\omega)=\int_S F_\phi(\sigma, \tau) \psi_n(\sigma) e^{-i\omega\tau}\, d\sigma\, d\tau
	\] is entire in $\omega$ for each $n$. Write $\omega=x+iy$ and let $C>0$ be such that $f(\sigma, \tau, \phi)$ vanishes for $|\tau|\geq C$. Integrating by parts in $\sigma$ we obtain the useful bound \begin{eqnarray}
\notag		\widehat{F_\phi}(n,\omega)&=&\int_S F_\phi(\sigma, \tau) \psi_n(\sigma) e^{(y-ix)\tau}\, d\sigma\, d\tau\\
\notag		&=&E_n^{-1}\int_S \left(-\frac{\partial^2}{\partial \sigma^2}\right)F_\phi(\sigma, \tau) \psi_n(\sigma) e^{(y-ix)\tau}\, d\sigma\, d\tau\\
\label{eq:fourier_coefficient_bound}		&=&O(E_n^{-1}e^{C|y|}).
	\end{eqnarray}
	As seen in the previous section, the solution of the upper model Dirichlet problem is \[
	v(\sigma, \tau, \phi)=\frac{e^{-\alpha\tau}}{2\pi}\sum_{n\geq 1}\psi_n(\sigma)a_n(\tau, \phi), 
	\]
	where 
\begin{equation}\label{eq:complex_integral}
a_n(\tau, \phi)=\int_\R \frac{\widehat{F_\phi}(n,\omega)}{E_n+\omega^2+\alpha^2} e^{i \omega \tau}\, d\omega.
\end{equation}
The integrand is a meromorphic function with poles at $\omega=\pm i\sqrt{E_n+\alpha^2}$. Remember that we are interested in the small-$t$ asymptotics. Since $\tau=\frac{\log t}{2\alpha}$, this corresponds to the regime $\tau\rightarrow-\infty$ when $\alpha>0$ and to the regime $\tau\rightarrow +\infty$ when $\alpha<0$. Accordingly, we move the contour of integration in \eqref{eq:complex_integral} to the line $\R-i\mathrm{sgn}(\alpha)T$, where $T>0$. This is easily justified by \eqref{eq:fourier_coefficient_bound}. If $T>\sqrt{E_n+\alpha^2}$, we cross a pole on the imaginary axis and obtain  \[
a_n(\tau, \phi) = -\mathrm{sgn}(\alpha)2\pi ie^{\mathrm{sgn}(\alpha)\tau\sqrt{E_n+\alpha^2}}\frac{\widehat{F_\phi}(n,-i\mathrm{sgn}(\alpha)\sqrt{E_n+\alpha^2})}{-2i\mathrm{sgn}(\alpha)\sqrt{E_n+\alpha^2}}+ \int_{\R-i\mathrm{sgn}(\alpha)T} \frac{\widehat{F_\phi}(n,\omega)}{E_n+\omega^2+\alpha^2} e^{i \omega \tau}\, d\omega, 
\]
while for $0<T<\sqrt{E_n+\alpha^2}$, we have \[
a_n(\tau, \phi) =  \int_{\R-i\mathrm{sgn}(\alpha)T} \frac{\widehat{F_\phi}(n,\omega)}{E_n+\omega^2+\alpha^2} e^{i \omega \tau}\, d\omega.\]
Again by \eqref{eq:fourier_coefficient_bound}, \begin{eqnarray*}
	\int_{\R-i\mathrm{sgn}(\alpha)T} \frac{\widehat{F_\phi}(n,\omega)}{E_n+\omega^2+\alpha^2} e^{i \omega \tau}\, d\omega &=& \int_\R \frac{\widehat{F}(n,x-i\mathrm{sgn}(\alpha)T)}{E_n+\alpha^2-T^2+x^2-2ixT} e^{(\mathrm{sgn}(\alpha)T+ix) \tau}\, dx\\
	&=& O\left(E_n^{-1}e^{CT+\mathrm{sgn}(\alpha)T\tau}\int_\R (1+x^2)^{-1}\, dx\right)\\
&=&O(E_n^{-1}e^{CT+\mathrm{sgn}(\alpha)T\tau}).
	\end{eqnarray*}
Let \[
\sqrt{E_{n_0}+\alpha^2}<T<\sqrt{E_{n_0+1}+\alpha^2}.
\]
Then \begin{eqnarray*}
	v(\sigma, \tau, \phi)&=&\frac{e^{-\alpha\tau}}{2}\sum_{n=1}^{n_0}\psi_n(\sigma)  e^{\mathrm{sgn}(\alpha)\tau\sqrt{E_n+\alpha^2}}\frac{\widehat{F_\phi}(n,-i\mathrm{sgn}(\alpha)\sqrt{E_n+\alpha^2})}{\sqrt{E_n+\alpha^2}}\\
&+&e^{-\alpha\tau}\sum_{n\geq 1}\psi_n(\sigma)O(E_n^{-1}e^{CT+\mathrm{sgn}(\alpha)T\tau})\\
&=&\frac{1}{2}\sum_{n=1}^{n_0}\psi_n(\sigma)  e^{\mathrm{sgn}(\alpha)(\sqrt{E_n+\alpha^2}-|\alpha|)\tau}\frac{\widehat{F_\phi}(n,-i\mathrm{sgn}(\alpha)\sqrt{E_n+\alpha^2})}{\sqrt{E_n+\alpha^2}}+O_T(e^{\mathrm{sgn}(\alpha)(T-|\alpha|)\tau}), 
	\end{eqnarray*}
where we used that $\sum_{n\geq 1}E_n^{-1}<\infty$. Let \begin{eqnarray*}
c_n(f; \phi)&:=&\frac{1}{\sqrt{2b-2a}}\frac{\widehat{F_\phi}(n,-i\mathrm{sgn}(\alpha)\sqrt{E_n+\alpha^2})}{\sqrt{E_n+\alpha^2}}\\
&=&\frac{1}{\sqrt{2b-2a}}\frac{1}{\sqrt{E_n+\alpha^2}}\int_S e^{2\sigma + \alpha\tau}f(\sigma, \tau, \phi) \psi_n(\sigma) e^{-\mathrm{sgn}(\alpha)\sqrt{E_n+\alpha^2}\tau}\, d\sigma\, d\tau
\end{eqnarray*}
Recalling that $t=e^{2\alpha\tau}$ and $\sigma=\log r$, we obtain the desired asymptotic expansion. The observation about the first coefficient follows from the positivity of $\psi_1(\sigma)$. 

\end{proof}

\subsection{Proof of failure of exact regularity} We can finally disprove exact regularity of the worm sublaplacians $P$ introduced in Section \ref{sec:worm_vector_measure}. Refer to that section for the notation. Recall that for every datum $f\in L^2(M,\mu)$ of zero average there exists a unique $u\in L^2(M,\mu)$ of zero average and finite energy (i.e., $X_1u, X_2u\in L^2(M, \mu)$) such that $Pu=f$ (by Theorem \ref{thm:global_solutions_cr}, see also the end of Section \ref{sec:worm_vector_measure}). Write $u:=P^{-1}f$. 

\begin{thm}[Worm sublaplacians are not exactly regular]\label{thm:worm_irreg}
For any worm sublaplacian $P$, there exists $k_0\in \mathbb{N}$ such that $P$ is not exactly regular of order $k_0$. 
	\end{thm}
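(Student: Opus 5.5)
We argue by contradiction: assume $P$ is exactly regular of order $k$ for every $k$, and apply the Scaling Lemma (Lemma \ref{lem:scaling}) with a suitable choice of $k=k_0$. First we check its hypotheses in the worm setting, all recorded in Section \ref{sec:worm_vector_measure}. By Theorem \ref{thm:global_solutions_cr}, the global $L^2$ Poincaré inequality holds. By property (V3) and Hörmander's theorem, $P$ is hypoelliptic on $M\setminus\Sigma$. By (V1)--(V2), the annulus $\Sigma$ is a compact characteristic hypersurface without grazing boundary points. The Scaling Lemma then yields, for every $f\in H^{k}_{\mathrm{comp}}(N\Sigma)$, a solution $v$ of $\mathfrak{P}v=f$ satisfying (i)--(iii). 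Conditions (i) and (ii) say exactly that $v$ solves the model Dirichlet problem (DP1)--(DP3) with datum $f$, where the reduction of $\mathfrak{Y}$ to \eqref{eq:Y_model} is justified because $v$ vanishes off $N\Sigma$. By the uniqueness established in Section \ref{sec:fourier}, together with Lemma \ref{lem:gluing}, $v$ is \emph{the} solution of the model Dirichlet problem. Condition (iii) then tells us that $v\in H^k_{\mathrm{vert}}(N\Sigma_0)$.

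Next we choose a datum: take $f\in C^\infty_c(N_+\Sigma)$ with $f\ge 0$ and $f\not\equiv 0$. By Proposition \ref{prp:asymptotic}, $v\equiv 0$ for $t<0$, while for small $t>0$
\[
v=c_1\Bigl(f;\theta+\tfrac{\log t}{2\alpha}\Bigr)\sin\Bigl(\pi\tfrac{\log r-a}{b-a}\Bigr)t^{\beta}+O(t^{\beta'}),
\]
where $\beta=\tfrac12(\sqrt{1+E_1/\alpha^2}-1)>0$, $\beta'>\beta$, and $c_1(f;\cdot)\ge0$ is continuous and not identically zero. Fix $k_0$ to be an integer with $k_0>\beta+\tfrac12$.

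The key step is to show that such a $v$ cannot lie in $H^{k_0}_{\mathrm{vert}}$ near the zero section, which is the desired contradiction. Fix $(r,\theta)$ in a set of positive measure on which the sine factor is bounded below. In one vertical variable, an $H^{k_0}(\R_t)$ function is $C^{k_0-1}$ with $(k_0-1)$-th derivative Hölder of exponent $\tfrac12$. Since $v$ vanishes for $t<0$, this forces $|v(t)|\lesssim |t|^{k_0-1/2}$ near $0$ for a.e.\ fiber. On the other hand, the leading term has size $t^{\beta}$ along the sequence of $t\to0^+$ where $\theta+\tfrac{\log t}{2\alpha}$ lands near a point where $c_1>0$; this sequence exists because the phase winds logarithmically around the circle. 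Since $\beta<k_0-\tfrac12$, the two bounds are incompatible. To make this rigorous we first integrate over $\theta$, to avoid dealing with the pointwise oscillation, and then apply the one-dimensional Sobolev bound on a.e.\ fiber.

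The main obstacle is this last step. The error term in Proposition \ref{prp:asymptotic} must be uniform in $(r,\theta)$; this follows from the proof, since the constants there depend only on $f$ and $T$. We also need Fubini to pass from the fiberwise statement to membership in $H^{k_0}_{\mathrm{vert}}$. An alternative route avoids pointwise bounds altogether: compute $\int_K\int_0^1|\partial_t^{k_0}v|^2\,dt\,dx$ directly. Differentiating the leading term gives $|\partial_t^{k_0}v|\approx t^{\beta-k_0}$ times a nonnegative oscillating factor whose $\theta$-average is positive, and $t^{2(\beta-k_0)}$ is not integrable at $0$. This route requires the expansion to be differentiable in $t$ termwise, which can be obtained by differentiating under the contour integral in the proof of Proposition \ref{prp:asymptotic}.
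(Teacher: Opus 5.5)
Your proof is correct. Up to the last step it is the paper's argument: contradiction via the Scaling Lemma, identification of $v$ with the unique solution of the model Dirichlet problem, and Proposition~\ref{prp:asymptotic} applied to a nonnegative datum supported in $N_+\Sigma$.

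The difference is in how the contradiction is extracted.
\begin{itemize}
\item The paper assumes exact regularity of \emph{all} orders. Then $v(x_1,x_2,\cdot)$ is smooth and, since it vanishes for $t<0$, flat ($O(t^N)$ for every $N$) at $t=0$. It then samples along $t=e^{-4\pi|\alpha|k}$, where the phase $\theta+\frac{\log t}{2\alpha}$ returns to $\theta$, with $\theta$ chosen in an interval where $c_1>0$.
\item You need only a single order $k_0>\beta+\frac12$, and your argument in fact uses only exact regularity of that order. The embedding $H^{k_0}(\R)\subset C^{k_0-1,1/2}$, together with vanishing for $t<0$, gives $|v|\lesssim t^{k_0-1/2}$ on a.e.\ fiber. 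This suffices because $\beta<k_0-\frac12$.
\end{itemize}
What your route buys is an explicit threshold: $P$ fails to be exactly regular of every order $k>\frac12\sqrt{1+E_1/\alpha^2}$, with $E_1=\pi^2/(b-a)^2$. The paper's proof only yields the existence of some $k_0$.

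Two remarks on the write-up.

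First, neither the $\theta$-integration nor the uniformity of the remainder is needed. Fix $(r,\theta)$ with $\log r\in(a,b)$ in the full-measure set of good fibers. Because the phase winds, you can pick $t_j\to0^+$ with $\theta+\frac{\log t_j}{2\alpha}\equiv\phi_0 \pmod{2\pi}$, where $c_1(f;\phi_0)>0$. The contradiction then follows fiberwise.

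Second, the alternative route through $\int_0^1|\partial_t^{k_0}v|^2\,dt$ is less robust than you suggest. Applying $\partial_t^{k_0}$ to the leading term gives $t^{\beta-k_0}Q\bigl(\theta+\frac{\log t}{2\alpha}\bigr)$, where $Q=\prod_{j<k_0}\bigl(\beta-j+\frac{1}{2\alpha}\partial_\phi\bigr)c_1$.
\begin{itemize}
\item $Q$ is not nonnegative.
\item $Q$ can vanish identically, for instance when $f$ is independent of $\phi$ (so $c_1$ is constant) and $\beta$ is an integer. In that case the obstruction sits in the jump of $\partial_t^{\beta}v$ at $t=0$, which an integral over $t>0$ cannot see.
\item This route also requires control of $t$-derivatives of the remainder.
\end{itemize}
Keep the pointwise route.
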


\begin{proof} 
Assume, by contradiction, that $P$ is exactly regular of all orders, that is, for every $k\in \mathbb{N}$ and all data $f\in H^k(M)$ of zero average, $P^{-1}f\in H^k(M)$. As remarked at the end of Section \ref{sec:worm_vector_measure}, the worm sublaplacian satisfies all the assumptions of the scaling lemma (Lemma \ref{lem:scaling}), where of course $\Sigma$ is the embedded complex-analytic annulus. Thus, for each $f\in H^k_\mathrm{comp}(N\Sigma)$ the model Dirichlet problem of Section \ref{sec:gluing} admits solutions satisfying the vertical regularity condition $v\in H^k_\mathrm{vert}(N\Sigma)$. We also saw in Section \ref{sec:fourier} that such solutions are unique. 

Let $f$ be a nontrivial nonnegative test function compactly supported in $N_+\Sigma$ and $v$ the solution of the model Dirichlet problem with datum $f$. By what we said, $v\in H^k_\mathrm{vert}(N\Sigma)$ for all $k$. By the Sobolev embedding, $v(x_1,x_2, \cdot)$ is smooth for almost every $x_1,x_2$. By Proposition \ref{prp:asymptotic}, $v$ vanishes on $N_-\Sigma$, while on $N_+\Sigma$ we have \begin{equation}\label{eq:asymptotic}
v = c_1\left(f; \theta+\frac{\log t}{2\alpha}\right)\sin\left(\pi \frac{\log r-a}{b-a}\right)  t^{\frac{1}{2}(\sqrt{1+E_1/\alpha^2}-1)}+o(t^{\frac{1}{2}(\sqrt{1+E_1/\alpha^2}-1)}), 
\end{equation}
where $E_1=\frac{\pi^2}{(b-a)^2}$ and $c_1(f; \phi)>0$ for all $\phi$ on a interval of positive length (by continuity). Since $v$ vanishes for $t<0$, vertical smoothness forces \begin{equation}\label{eq:flatness}
	v(x_1,x_2, t)=O(t^N)\qquad \forall N\in \mathbb{N}\end{equation} 
for almost all $(x_1,x_2)$. Evaluating \eqref{eq:asymptotic} at $(x_1,x_2, t)$ for $t=e^{-4\pi |\alpha| k}$, $k\in \mathbb{N}$, and $(x_1,x_2)$ of polar coordinates $(r,\phi_0)$, we get \[
v = c_1\left(f; \phi_0\right)\sin\left(\pi\frac{\log r-a}{b-a}\right)t^{\frac{1}{2}(\sqrt{1+E_1/\alpha^2}-1)}+o(t^{\frac{1}{2}(\sqrt{1+E_1/\alpha^2}-1)})
\] 
for arbitrarily small $t>0$. Choosing, as we may, $r$ and $\phi$ so that \eqref{eq:flatness} holds for the corresponding $(x_1,x_2)$ gives the desired contradiction. 
	\end{proof}

\subsection{A remark on the foliation on $N\Sigma$}\label{sec:geometry}

We sketch a differential geometric explanation for the appearance of a foliation in the model problem. Let $\xi$ be a nowhere vanishing one-form such that $HM=\ker \xi$, defined modulo a conformal factor $\pm e^f$, with $f\in C^\infty(M, \R)$. Then pseudoconvexity of $M$ amounts to the condition \begin{equation}\label{eq:confoliation}
	\xi \wedge d\xi\geq 0, \end{equation} which may be easily seen to be independent of the conformal factor (here $M$ inherits its orientation as the boundary of a domain in $\mathbb{C}^2$). At points of strong pseudoconvexity the inequality is strict, that is, $\xi$ is a contact form. A plane field on a three-dimensional oriented manifold satisfying \eqref{eq:confoliation} (for some, hence every, nowhere zero one-form $\xi$ annihilating the plane field) is called a \emph{confoliation}, after \cite{EliashbergThurston1998}. By a formulation of the three-dimensional Frobenius theorem, if equality holds in \eqref{eq:confoliation} on an open set, then the plane field gives a foliation there. Thus, confoliations interpolate between foliations and contact structures. Assume that a confoliation has a leaf, that is, an integral surface $\Sigma$ tangent to the confoliation. If $\Phi$ is a tubular neighborhood diffeomorphism (as in Section \ref{sec:dilation_machinery}), we may write \[
(\Phi^{-1})^*\xi=dt+t\pi^*\omega+t^2\omega', 
\]
where $\omega$ is a one-form on $\Sigma$, $\pi:N\Sigma\rightarrow \Sigma$ is the canonical projection, and $\omega'$ is a one-form on $N\Sigma$. Then \begin{eqnarray*}
(\Phi^{-1})^*\xi \wedge d((\Phi^{-1})^*\xi)&=&(dt+t\pi^*\omega+t^2\omega')\wedge (dt\wedge\pi^*\omega+t\pi^*d\omega+2tdt\wedge \omega'+t^2d\omega')\\
&=&tdt\wedge \pi^*d\omega+t^2\omega'', 
\end{eqnarray*}
where $\omega''$ is a one-form on $N\Sigma$. By the confoliation condition, this three-form is nonnegative in a neighborhood of the zero section, which forces $dt\wedge \pi^*d\omega$ to vanish. Since $\omega$ is a one-form on $\Sigma$, this means that $\omega$ is closed. When $M$ is the boundary of smooth domain in $\mathbb{C}^2$ and $\Sigma$ is an embedded Riemann surface with boundary, then $[\omega]\in H^1_\mathrm{dR}(\Sigma, \R)$ is the D'Angelo class. In particular, the fact that $d\omega=0$ is a reformulation of an observation of Boas and Straube \cite{BoasStraube1993}. 

The conclusion is that the $1$-jet of the confoliation $\xi$ in the fiber variable $t$, namely $dt+t\pi^*\omega$, is a foliation. A computation shows that, in the case of the worm domain, the closed form $\omega$ is a multiple of the angle form on the annulus. In general, the leaves of the foliation are easily seen to be the zero section $\{t=0\}$ and the multivalued graphs of the functions $\pm e^g$, as $g$ varies over the primitives of the closed form $\omega$. These foliations have linear holonomy around the zero section, and the holonomy is nontrivial if and only if $\omega$ is not globally exact. 

It would be of great interest to carry out the analysis of Section \ref{sec:worm} for the generalized worms of \cite{DallAraMongodi2023}, which leads to the more general foliations just discussed.



\end{document}